\title[{Duality of subregular $\mathcal{W}$-algebras and principal $\mathcal W$-superalgebras}]{Duality of subregular $\mathcal{W}$-algebras and principal $\mathcal W$-superalgebras}
\author{Thomas Creutzig}
\address{Department of Mathematical and Statistical Sciences, University of Alberta, 632 CAB, Edmonton, Alberta, Canada T6G 2G1}
\email{creutzig@ualberta.ca}
\author{Naoki Genra}
\address{Department of Mathematical and Statistical Sciences, University of Alberta, 632 CAB, Edmonton, Alberta, Canada T6G 2G1}
\email{genra@ualberta.ca}
\author{Shigenori Nakatsuka}
\address{Graduate School of Mathematical Sciences, The University of Tokyo, 3-8-1 Komaba, Tokyo, Japan 153-8914}
\email{nakatuka@ms.u-tokyo.ac.jp}
\address{Kavli Institute for the Physics and Mathematics of the Universe (WPI), The University of Tokyo Institutes for Advanced Study, The University of Tokyo, Kashiwa, Chiba 277-8583, Japan}
\email{shigenori.nakatsuka@ipmu.jp}
\newtheorem{definition}{Definition}[section]
\newtheorem{proposition}[definition]{Proposition}
\newtheorem{theorem}[definition]{Theorem}
\newtheorem{corollary}[definition]{Corollary}
\newtheorem{lemma}[definition]{Lemma}
\newtheorem{conjecture}[definition]{Conjecture}
\newtheorem{assumption}[definition]{Assumption}
\numberwithin{equation}{section}
\newcommand{\Z}{\mathbb{Z}}
\newcommand{\g}{\mathfrak{g}}
\newcommand{\C}{\mathbb{C}}
\newcommand{\End}{\operatorname{End}}
\newcommand{\Hom}{\operatorname{Hom}}
\newcommand{\Span}{\operatorname{Span}}
\newcommand{\Ker}{\operatorname{Ker}}
\newcommand{\conf}{\operatorname{\Delta}}
\newcommand{\wt}{\operatorname{wt}}
\newcommand{\voa}{vertex operator algebra}
\begin{document}
\begin{flushright}
IPMU-20-0062
\end{flushright}
\maketitle

\begin{abstract} 
We prove Feigin-Frenkel type dualities between subregular $\mathcal W$-algebras of type $A, B$ and principal $\mathcal W$-superalgebras of type $\mathfrak{sl}(1|n), \mathfrak{osp}(2|2n)$. The type $A$ case proves a conjecture of Feigin and Semikhatov. 

Let $(\mathfrak{g}_1,\mathfrak{g}_2) = (\mathfrak{sl}_{n+1},\mathfrak{sl}(1|n+1))$ or $(\mathfrak{so}_{2n+1}, \mathfrak{osp}(2|2n))$ and let $r$ be the lacity of $\mathfrak{g}_1$. Let $k$ be a complex number and $\ell$ defined by
$r(k+h^\vee_1)(\ell+h^\vee_2)=1$ with $h^\vee_i$ the dual Coxeter numbers of the $\mathfrak g_i$.
Our first main result is that the Heisenberg cosets $\mathcal C^k(\mathfrak g_1)$ and  $\mathcal C^\ell(\mathfrak g_2)$ of these $\mathcal W$-algebras at these dual levels are isomorphic, i.e. $\mathcal C^k(\mathfrak g_1) \simeq \mathcal C^\ell(\mathfrak g_2)$ for generic $k$. We determine the generic levels and furthermore establish analogous results for the cosets of the simple quotients of the $\mathcal W$-algebras. 

Our second result is a novel Kazama-Suzuki type coset construction: We show that a diagonal Heisenberg coset of the subregular $\mathcal W$-algebra at level $k$ times the lattice vertex superalgebra $V_{\mathbb Z}$
is  the principal $\mathcal W$-superalgebra at the dual level $\ell$. Conversely  a diagonal Heisenberg coset of the principal $\mathcal W$-superalgebra at level $\ell$ times the lattice vertex superalgebra $V_{\sqrt{-1}\mathbb Z}$
is the subregular $\mathcal W$-algebra at the dual level $k$. Again this is proven for the universal $\mathcal W$-algebras as well as for the simple quotients.

We show that a consequence of the Kazama-Suzuki type construction is that the simple principal $\mathcal W$-superalgebra and its Heisenberg coset at level $\ell$ are rational and/or $C_2$-cofinite if the same is true for the simple subregular $\mathcal W$-algebra at dual level $\ell$. This gives many new $C_2$-cofiniteness and rationality results. 
\end{abstract}

\markboth{Duality of subregular $\mathcal{W}$-algebras and principal $\mathcal W$-superalgebras}{Duality of subregular $\mathcal{W}$-algebras and principal $\mathcal W$-superalgebras}

\section{Introduction}

Let $\g$ be a simple Lie superalgebra, $k$ a complex number and $f$ a nilpotent element in $\g$ that belongs to an $\mathfrak{sl}_2$-triple. Then one associates to the universal affine vertex operator superalgebra $V^k(\g)$ at level $k$ via quantum Hamiltonian reduction the universal $\mathcal W$-superalgebra $\mathcal W^k(\g, f)$ \cite{FF3, KRW}. Its unique simple quotient is denoted by $\mathcal W_k(\g, f)$. If $f$ is principal nilpotent, then one often omits mentioning $f$ and we do so as well. Affine vertex superalgebras and their $\mathcal W$-superalgebras are most important families of vertex superalgebras due to their essential role in various aspects of representation theory, geometry and physics.
This ranges from the geometric and quantum geometric Langlands correspondence \cite{F2, AFO} and topological invariants of 4-manifolds \cite{FeGu} to meaningful invariants of three and four dimensional supersymmetric quantum field theories \cite{AGT, BMR,CG,FrGa, GR} or symmetries of six dimensional conformal field theories \cite{BRvR}. The famous Feigin-Frenkel duality \cite{FF1} of principal $\mathcal W$-algebras asserts that for all non-critical levels $k$ in $\mathbb C$ one has (see \cite[Theorem 5.6]{ACL2} for a short proof)
\[
\mathcal W^k(\g) \simeq \mathcal W^\ell({}^L\g),
\]
where ${}^L\g$ is the Langlands dual Lie algebra of $\g$ and the dual level $\ell$ is defined by
\begin{equation} \label{eq:duallevels}
r (k+h^\vee)(\ell+ {}^Lh^\vee) =1
\end{equation}
with $r$ the lacity of $\g$ and $h^\vee, {}^Lh^\vee$ the dual Coxeter numbers of $\g$ and ${}^L\g$ respectively.
$\mathcal W$-algebras corresponding to non-principal nilpotent elements usually have a non-trivial weight one subspace, i.e., they have an affine \voa{} as a subalgebra. In this instance we do not expect an immediate isomorphism of $\mathcal W$-algebras, but different $\mathcal W$-algebras might have coset subalgebras that are isomorphic. For example, the coset by the affine subalgebra of the large $N=4$ superconformal algebra, i.e., the minimal $\mathcal W$-superalgebra of $\mathfrak{d}(2, 1;\alpha)$, coincides with a diagonal coset of the tensor product of two affine vertex algebras of type $\mathfrak{sl}_2$ \cite{CFL}.
We are interested in such isomorphisms if $\g$ is either of type $A$ or $B$ and in this case we still expect that the levels of the involved $\mathcal W$-algebras satisfy a relation of the form \eqref{eq:duallevels}. 
However, we do not expect that this is an isomorphism between cosets of $\mathcal W$-algebras of Lie algebras that are Langlands dual to each other, but the role of the dual Lie algebra is replaced by some Lie superalgebra. The aim of this paper is to show that these expectations are true for very interesing $\mathcal W$-algebras. Our work is motivated by an impressive work of Boris Feigin and Alexei Semikhatov \cite{FS}.

\subsection{Feigin-Semikhatov $\mathcal W^{(2)}_n$-algebras}

Let $\g_1 =\mathfrak{sl}_n$ and $\g_2=\mathfrak{sl}(n|1)$. Then the subregular $\mathcal W$-algebra of $\g_1$ is strongly generated by even fields in conformal weights $1, 2, \dots, n-1$ together with two more fields of conformal weight $\frac{n}{2}$. The principal $\mathcal W$-superalgebra of $\g_2$ is strongly generated by even fields in conformal weight $1, 2, \dots, n$ together with a pair of odd fields of conformal weight $\frac{n+1}{2}$. In \cite{FS}, Feigin and Semikhatov constructed $\mathcal W^{(2)}_n$-algebras as subalgebras of the joint kernel of a set of screening charges acting on some free field algebras. These screening charges were associated to simple positive roots of $\g_2$. The subalgebra of the kernel of $\g_2$ screenings is interpreted as a $\mathcal W$-superalgebra of type $\g_2$ while the $\mathcal W^{(2)}_n$-algebra is generated under operator product by two even fields of conformal weight $\frac{n}{2}$. Next, also a vertex algebra generated by two weight $\frac{n}{2}$ fields is constructed inside the tensor product of the affine vertex superalgebra of $\g_2$ and a lattice vertex operator algebra. Many impressive operator product computations then suggest that the algebras appearing in these different constructions are isomorphic. These computations however do not prove any isomorphism and should be interpreted as the conjecture:
\begin{conjecture}\label{conjectureA} \textup{(Feigin and Semikhatov \cite{FS})}
Let $\g_1 =\mathfrak{sl}_n$, $\g_2=\mathfrak{sl}(n|1)$ and let $f_1$ be subregular nilpotent in $\g_1$ and $f_2$ be principal nilpotent in $\g_2$. 
Let $\pi_{H_i}$ be the Heisenberg subalgebra of $\mathcal{W}^{k_i} (\mathfrak{g}_i, f_i )$.
Then for generic $k_1$
\begin{enumerate}
\item $$\mathrm{Com}\left( \pi_{H_1}, \mathcal{W}^{k_1} (\mathfrak{g}_1, f_1) \right) \simeq \mathrm{Com}\left( \pi_{H_2}, \mathcal{W}^{k_2} (\mathfrak{g}_2) \right),$$
where $(k_1+n)(k_2+n-1)=1$.
\item $$\mathrm{Com}\left( \pi_{H_1}, \mathcal{W}^{k_1} (\mathfrak{g}_1, f_1)\right) \simeq \mathrm{Com}\left( V^{k_3}(\mathfrak{gl}_n), V^{k_3}(\g_2) \right),$$
where $\frac{1}{k_1+n} +\frac{1}{ k_3+n}=1$.
\end{enumerate}
\end{conjecture}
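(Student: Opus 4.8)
The plan is to realize both sides of each asserted isomorphism through \emph{free field (screening-operator) realizations} and then to show that the two families of screening operators become literally identical once one passes to the Heisenberg cosets. First I would fix such realizations for the two $\mathcal W$-algebras. For the subregular $\mathcal W$-algebra $\mathcal{W}^{k_1}(\g_1,f_1)$ one has an embedding into a tensor product of a Heisenberg vertex algebra with a $\beta\gamma$-type (or lattice) factor, realizing it as the joint kernel $\bigcap_i \Ker Q_i^{(1)}$ of screening operators $Q_i^{(1)}$ attached to the simple roots of $\g_1=\mathfrak{sl}_n$. Similarly I would realize the principal $\mathcal W$-superalgebra $\mathcal{W}^{k_2}(\g_2)$ as the joint kernel $\bigcap_j \Ker Q_j^{(2)}$ of the principal screenings attached to the simple roots of $\g_2=\mathfrak{sl}(n|1)$, one of which is odd (isotropic). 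In each realization the relevant $\pi_{H_i}$ is a distinguished rank-one Heisenberg field sitting inside the ambient free field algebra.

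The heart of the argument is then to match the two sets of data. Taking $\mathrm{Com}(\pi_{H_i},-)$ removes exactly one Heisenberg direction, and I would choose the lattices and screening momenta on the two sides so that the orthogonal complements are identified as Heisenberg vertex algebras, under which the surviving screening operators coincide. This forced identification of momenta is precisely the source of the level relations: it holds exactly when $(k_1+n)(k_2+n-1)=1$ in case (1), and when $\tfrac{1}{k_1+n}+\tfrac{1}{k_3+n}=1$ in case (2). Once the two cosets are exhibited as the joint kernel of \emph{one and the same} set of screenings acting on \emph{one and the same} free field algebra, they are equal as subalgebras, giving the desired isomorphism; case (2) would follow by the same comparison, now identifying the reduced screenings of $\mathcal{W}^{k_2}(\g_2)$ with those coming from the $\mathfrak{gl}_n$-coset of $V^{k_3}(\g_2)$ under a Kazama-Suzuki type change of free fields.

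The hard part will be proving that these free field realizations are \emph{tight}: that each $\mathcal W$-algebra equals the full joint kernel of its screenings rather than merely embedding into it, and, after cosetting, that $\mathrm{Com}(\pi_{H_i},-)$ is exactly the joint kernel of the reduced screenings with no larger commutant appearing. I would establish this by an Euler-Poincar\'e / character comparison: compute the graded character of the screening kernel from the cohomology of the screening complex and the known strong-generation data (weights $1,\dots,n-1$ together with two weight-$\tfrac{n}{2}$ fields on the subregular side, weights $1,\dots,n$ together with a pair of odd weight-$\tfrac{n+1}{2}$ fields on the principal side), and check that the resulting characters of the two cosets agree.

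The genericity hypothesis on $k_1$ is what makes the character bookkeeping go through: it should guarantee both that the screening cohomology is concentrated in cohomological degree zero (so the kernel has the expected character) and that each coset is freely, or at least minimally, strongly generated with no accidental null vectors forcing relations at special levels. Pinning down this generic locus explicitly, and verifying that it is nonempty and stable under the level map $k_1\mapsto k_2$, is the most delicate step; the screening-matching itself, once the realizations and the generic locus are in hand, is a direct comparison of free field momenta.
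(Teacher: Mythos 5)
For part (1) your outline is essentially the route the paper takes: realize $\mathcal W^{k_1}(\g_1,f_1)$ and $\mathcal W^{k_2}(\g_2)$ as joint kernels of screenings on free field algebras, pass to the Heisenberg cosets (Propositions \ref{prop:sub-W-coset} and \ref{prop:super-W-coset}), and observe that the Gram matrices of the surviving momenta coincide exactly when $(k_1+n)(k_2+n-1)=1$. Two points you underplay, though. First, after cosetting the two screening systems do \emph{not} literally coincide: on the subregular side the screenings carry momenta $-\tfrac{1}{k_1+h_1^\vee}\widetilde\alpha_i$ while on the super side they carry $\widetilde\beta_i$, and the identification of the kernels requires the rank-one Feigin--Frenkel duality for the Virasoro algebra (kernel of the ``long'' screening equals kernel of the ``short'' screening at generic level), applied node by node; this is the actual engine of the proof of Theorem \ref{thm:coset}, not a mere matching of lattices. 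Second, the ``tightness'' is not established by character bookkeeping: the generic-level identification of a $\mathcal W$-algebra with the joint kernel of its screenings is quoted from \eqref{eq:W-screening} (\cite{FBZ,G}), and the genuinely new input is the Wakimoto realization and exact two-step resolution of $V^\kappa(\mathfrak{gl}(1|1))$ (Proposition \ref{longexactsequence}), needed because the Levi factor on the super side is $\mathfrak{gl}(1|1)$ plus an abelian part and the corresponding odd (isotropic) screening had to be constructed from scratch; the exactness there is proved via the representation theory of induced $\widehat{\mathfrak{gl}}(1|1)$-modules, not an Euler--Poincar\'e count. Your character comparison might substitute for some of this, but you would still have to produce the odd screening and prove the analogues of Lemmas \ref{lem:M-isom} and \ref{lem:super-M-isom} identifying the target modules of the screenings with Fock-type modules.

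Part (2) is where your proposal has a genuine gap. The paper proves only part (1); for part (2) one needs a screening (free field) realization of the affine vertex superalgebra $V^{k_3}(\mathfrak{sl}(n|1))$ and of its $\mathfrak{gl}_n$-coset, which the authors explicitly state is work in progress and which is handled by entirely different methods in \cite{CL4,CL5}. ``The same comparison under a Kazama--Suzuki type change of free fields'' does not close this: the Kazama--Suzuki inversion actually proved here (Theorem \ref{thm:KS}) relates $\mathcal W^{k_2}(\g_2)$ to $\mathcal W^{k_1}(\g_1,f_{\mathrm{sub}})\otimes V_{\Z}$, i.e.\ to the subregular $\mathcal W$-algebra, not to the affine coset $\mathrm{Com}(V^{k_3}(\mathfrak{gl}_n),V^{k_3}(\g_2))$, and no screening description of that affine coset is available by the techniques of this paper. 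As written, your argument for (2) assumes precisely the missing ingredient.
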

Note that for $n=2$, $\mathcal{W}^{k_1} (\mathfrak{g}_1, f_1)=V^{k_1}(\mathfrak{sl}_2)$ and $ \mathcal{W}^{k_2} (\mathfrak{g}_2)$ is the $N=2$ super Virasoro algebra. In this case the first part of the Conjecture is essentially the well-known Kazama-Suzuki coset realization of the $N=2$ super Virasoro algebra \cite{DPYZ} proven in \cite[Corollary 8.8]{CL3}. The second part of the Conjecture for $n=2$ follows from a nice relation between $V^k(\mathfrak{sl}(2|1))$ and $L_1(\mathfrak{d}(2, 1; -(k+1)))$ \cite{BFST, CG}. The second part of the Conjecture for $n=3$ is \cite[Theorem 6.2]{ACL1}. The proof used that in this case \cite{FS} explicitely computed all the necessary operator product algebras for the inclusion of $\mathrm{Com}\left( \pi_{H_1}, \mathcal{W}^{k_1} (\mathfrak{g}_1, f_1 )\right)$ in $\mathrm{Com}\left( V^{k_3}(\mathfrak{gl}_n), V^{k_3}(\g_2) \right)$. The surjectivity then followed from the computation of strong generators that can be done in an orbifold limit \cite{CL3} and that limit is determined in \cite[Theorem 4.3]{CL1}. We remark that the cases $n=2$ and $n=4$ can be handled similarly, but for larger $n$ not enough is known about operator product algebras and so there is need for a different proof strategy. 

In this work, we prove the first part of this Conjecture. The development of screening realizations of affine vertex superalgebras is work in progress. We believe that this will then allow us to also treat the second part of the Conjecture.
We note that a very different proof of the Conjecture is given in \cite{CL4}.

We actually not only prove the first part of the Conjecture, but also notice that a similar statement holds in the case of type $B$, namely Theorem \ref{thm:coset} and Corollaries \ref{cor:coset} and \ref{isom: simple quotients}.

Let $(\mathfrak{g}_1,\mathfrak{g}_2) = (\mathfrak{sl}_{n+1},\mathfrak{sl}(1|n+1))$ or $(\mathfrak{so}_{2n+1}, \mathfrak{osp}(2|2n))$. Denote by $r$ the lacity of $\mathfrak{g}_1$, (which is equal to that of $\mathfrak{g}_2$), and by $h^\vee_i$ the dual Coxeter number of $\mathfrak{g}_i$, namely,
\begin{align*}
(r,h^\vee_1,h^\vee_2)=
\begin{cases}
(1,n+1,n),& \mathrm{if}\ (\mathfrak{g}_1,\mathfrak{g}_2)=(\mathfrak{sl}_{n+1}, \mathfrak{sl}(1|n+1)),\\
(2,2n-1,n),& \mathrm{if}\ (\mathfrak{g}_1,\mathfrak{g}_2)=(\mathfrak{so}_{2n+1}, \mathfrak{osp}(2|2n)).
\end{cases}
\end{align*}
For $k_1\in \C\backslash \{-h^\vee_1\}$, define $k_2 \in \C$ by the formula
\begin{align}\label{introeq: ell}
r(k_1+h^\vee_1)(k_2+h^\vee_2)=1.
\end{align}
Set the rational numbers $x_i$, ($i=1,2$), by 
\begin{align}\label{eq: second critical level}
&(x_1,x_2)=
\begin{cases}
\left(\frac{1}{n}-n,-\frac{n^2}{n+1}\right),&\mathrm{if}\ (\mathfrak{g}_1,\mathfrak{g}_2)=(\mathfrak{sl}_{n+1}, \mathfrak{sl}(1|n+1)),\\
\left(2-2n,\frac{1}{2}-n\right),&\mathrm{if}\ (\mathfrak{g}_1,\mathfrak{g}_2)=(\mathfrak{so}_{2n+1}, \mathfrak{osp}(2|2n)),
\end{cases}
\end{align}
and $S_i= \{-h_i^\vee, x_i\}$. 
\begin{theorem}
Let $(\mathfrak{g}_1,\mathfrak{g}_2)=(\mathfrak{sl}_{n+1}, \mathfrak{sl}(1|n+1))$, or $(\mathfrak{so}_{2n+1}, \mathfrak{osp}(2|2n))$, and $(k_1,k_2)$ satisfy \eqref{introeq: ell}. Then 
 for $k_1 \in \mathbb C\setminus S_1$ and $k_2 \in \mathbb C\setminus S_2$,
$$\mathrm{Com}\left( \pi_{H_1}, \mathcal{W}^{k_1} (\mathfrak{g}_1, f_\mathrm{sub}) \right) \simeq \mathrm{Com}\left( \pi_{H_2}, \mathcal{W}^{k_2} (\mathfrak{g}_2) \right)$$
and also for the simple quotients
$$\mathrm{Com}\left( \pi_{H_1}, \mathcal{W}_{k_1} (\mathfrak{g}_1, f_\mathrm{sub}) \right) \simeq \mathrm{Com}\left( \pi_{H_2}, \mathcal{W}_{k_2} (\mathfrak{g}_2) \right).$$
\end{theorem}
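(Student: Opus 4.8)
The plan is to realize each Heisenberg coset as the joint kernel of a system of screening operators acting on a common free field vertex algebra, and then to show that under the duality relation \eqref{introeq: ell} the two screening systems coincide. First I would fix Wakimoto-type free field realizations of both $\mathcal{W}$-algebras. The subregular $\mathcal{W}$-algebra $\mathcal{W}^{k_1}(\mathfrak{g}_1, f_{\mathrm{sub}})$ should embed into a free field algebra $\pi_{H_1}\otimes \mathbb{V}_1$ (with $\mathbb{V}_1$ a Heisenberg vertex algebra, together with a neutral free fermion in the $\mathfrak{so}_{2n+1}$ case) as the joint kernel of screenings $Q^{(1)}_i$ indexed by the simple roots of $\mathfrak{g}_1$, while $\mathcal{W}^{k_2}(\mathfrak{g}_2)$ should embed into $\pi_{H_2}\otimes \mathbb{V}_2$ as the joint kernel of screenings $Q^{(2)}_j$ indexed by the simple roots of $\mathfrak{g}_2$, exactly one of which is an odd screening attached to the isotropic simple root. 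That these $\mathcal{W}$-algebras are the \emph{full} joint kernels for generic level, and not merely subalgebras of them, is the essential structural input, and it is precisely here that the excluded levels $S_i = \{-h^\vee_i, x_i\}$ enter.

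Next I would pass to the commutants by the Heisenberg subalgebras $\pi_{H_i}$. Removing one free boson from each side, both cosets embed into the same reduced free field algebra $\mathcal{V}$, and the surviving screenings restrict to operators on $\mathcal{V}$. The crux of the argument is then a direct computation of the screening momenta as functions of $k_i$ and $h^\vee_i$: the Langlands-type substitution encoded in $r(k_1+h^\vee_1)(k_2+h^\vee_2)=1$ interchanges the long and short momenta, and in particular matches the special screening produced by the subregular reduction of $\mathfrak{g}_1$ with the odd isotropic root screening of the principal reduction of $\mathfrak{g}_2$. Because the joint kernel of a fixed screening system is a single, well-defined vertex subalgebra of $\mathcal{V}$, coincidence of the two systems yields at once the universal isomorphism
$$\mathrm{Com}\left(\pi_{H_1},\mathcal{W}^{k_1}(\mathfrak{g}_1,f_{\mathrm{sub}})\right) \simeq \mathrm{Com}\left(\pi_{H_2},\mathcal{W}^{k_2}(\mathfrak{g}_2)\right).$$

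For the simple quotients I would argue that the free field realization descends. After embedding into $\mathcal{V}$, each simple coset is the joint screening kernel computed inside the corresponding simple (lattice-type) free field module; since the two are realized as the same kernel inside the same module, they coincide. Equivalently, one shows that each simple coset is simple for generic level and is the image of the universal coset under the canonical projection, so that the already-established universal isomorphism descends through matching maximal ideals.

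The step I expect to be the main obstacle is the exactness-and-matching underlying the first two paragraphs: proving that each $\mathcal{W}$-algebra is the entire joint kernel of its screening system, and that after the Heisenberg coset the two residual systems are genuinely identical rather than merely of the same shape. Controlling the excluded loci — the critical level $-h^\vee_i$, where the Heisenberg direction and the duality formula degenerate, and the second value $x_i$, where the free field realization ceases to be exact — and correctly handling the fermionic contributions of the odd screening in the $\mathfrak{osp}(2|2n)$ case, is the delicate technical core of the proof.
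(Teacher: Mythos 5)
Your generic-level strategy is the same as the paper's: embed both $\mathcal{W}$-algebras into free field algebras via Wakimoto realizations, bosonize the $\beta\gamma$ and $bc$ ghosts so that the Heisenberg cosets become joint kernels of rank-one screening operators on pure Heisenberg vertex algebras, observe that the Gram matrices of the two screening systems coincide under $r(k_1+h^\vee_1)(k_2+h^\vee_2)=1$, and apply the rank-one (Virasoro) Feigin--Frenkel duality $\Ker\int :\mathrm{e}^{-\frac{1}{k+h^\vee}\int\alpha}: = \Ker\int :\mathrm{e}^{\int\alpha}:$ to each bosonic screening to identify the two systems. Your second formulation of the simple-quotient step also matches the paper: semisimplicity of the $\pi_{H_i}$-action implies that the coset of the simple quotient is the simple quotient of the coset, so the universal isomorphism descends.

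The genuine gap is the passage from generic levels to all $k_1\in\mathbb{C}\setminus S_1$. The screening descriptions of the $\mathcal{W}$-algebras and of their cosets hold only for \emph{generic} $k$, i.e.\ outside countably many values, so your argument as written proves the isomorphism only on that countable complement — not on $\mathbb{C}\setminus S_1$, where $S_1$ has just two elements. The paper closes this gap with a separate deformation argument: both cosets are continuous families of graded vertex subalgebras of a fixed non-degenerate rank-$(n+1)$ Heisenberg algebra, their graded characters are independent of the level on $\mathbb{C}\setminus S_i$ (via the coset-character arguments of \cite{CL3}), and two such continuous families that agree on a dense subset agree everywhere (Lemma \ref{continuity}). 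Your proposal contains no mechanism of this kind. Relatedly, you misidentify the role of $x_i$: it is not where "the free field realization ceases to be exact" but where the field $H_i$ becomes null, so that $\pi_{H_i}$ degenerates; this is what destroys the semisimple Heisenberg action and the character/continuity argument, and it is why $x_i$ must be excluded. Finally, your first proposed route to the simple quotients (computing the screening kernel inside a "simple lattice-type free field module") is not justified and is not what is needed; only your alternative formulation via matching maximal ideals, made precise by the semisimplicity of the Heisenberg action as in Corollary \ref{Heisenberg coset}, actually works.
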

There is also an analogue of the second part of Conjecture \ref{conjectureA} and that will be explained and proven in \cite{CL5}.

Our result can be improved to a new variant of Kazama-Suzuki cosets.

\subsection{Kazama-Suzuki cosets}

Kazama-Suzuki cosets appeared in the 1980's as building blocks of sigma models in string theory \cite{KaSu}. The idea is to consider $V^k(\mathfrak{sl}_{n+1})$ and tensor it with $n$ pairs of free fermions so that they carry the standard representation of $\mathfrak{gl}_n$ and its conjugate and so especially an action of $L_1(\mathfrak{gl}_n)$. The coset by the diagonal  $V^{k+1}(\mathfrak{gl}_{n})$-action then automatically has odd fields in conformal weight $\frac{3}{2}$ and actually gives rise to an extension of the $N=2$ super Virasoro algebra.
They are conjecturally isomorphic to principal $\mathcal W$-superalgebras of $\mathfrak{sl}(n+1|n)$ and the case $n=1$ is the just mentioned relation between the $N=2$ super Virasoro algebra and $V^k(\mathfrak{sl}_2)$. The case of $n=2$ is proven in \cite{GL} and strong rationality of the Kazama-Suzuki cosets in \cite[Corollary 14.1]{ACL2}. The idea of Kazama-Suzuki can be generalized in the following way. Consider some vertex operator algebra $\mathcal A^k$ with non-trivial action of $V^{k}(\mathfrak{gl}_{n})$, so that the tensor product of $\mathcal A^k$ with $n$-pairs of free fermions has a diagonal $V^{k+1}(\mathfrak{gl}_{n})$-action. Then the commutant by this diagonal action is our new variant of Kazama-Suzuki coset. For a recent related work, see \cite{Sato2}. We are interested in the case of $n=1$, i.e., $V^{k}(\mathfrak{gl}_{1})$ is nothing but a rank one Heisenberg vertex algebra. In this case, there is a remarkable observation due to Boris Feigin, Alexei Semikhatov and Ilya Tipunin that one can also somehow invert this coset construction \cite{FST}. This has been put to efficient use in studying the representation theory of the $N=2$ super Virasoro algebra and its relation to the one of the simple affine vertex algebra of $\mathfrak{sl}_2$, $L_k(\mathfrak{sl}_2)$ \cite{Ad2, Ad3, Sato1, KoSa, CLRW}. Also the relation between the $\beta\gamma$-system vertex algebra and $V^k(\mathfrak{gl}(1|1))$ can be viewed as the very first instance of this phenomenon and has also been used efficiently for new insights \cite{CR1, CR3, AP}.
Our second main theorem is the following Kazama-Suzuki type coset theorem and its inverse, see Theorem \ref{thm:KS} and Corollaries \ref{cor:coset} and \ref{isom: simple quotients}.
Set $K_i=\{-h_i^\vee\}$. Then we have the following.
\begin{theorem}  \label{cor:coset}
Let $(\mathfrak{g}_1,\mathfrak{g}_2)=(\mathfrak{sl}_{n+1}, \mathfrak{sl}(1|n+1))$, or $(\mathfrak{so}_{2n+1}, \mathfrak{osp}(2|2n))$, and $(k_1,k_2)$ satisfy \eqref{introeq: ell}, $k_1 \in \C\setminus K_1$ and $k_2 \in \mathbb C\setminus K_2$.
Then the Kazama-Suzuki type coset isomorphisms
\begin{equation}
\begin{split}
\mathcal{W}^{k_2} (\mathfrak{g}_2) &\simeq \mathrm{Com}\left( \pi_{\widetilde{H}_1},  \mathcal{W}^{k_1} (\mathfrak{g}_1, f_\mathrm{sub}) \otimes V_\Z \right),\\
\mathcal{W}_{k_2} (\mathfrak{g}_2) &\simeq \mathrm{Com}\left( \pi_{\widetilde{H}_1},  \mathcal{W}_{k_1} (\mathfrak{g}_1, f_\mathrm{sub}) \otimes V_\Z \right)
\end{split}
\end{equation}
and their inverses 
\begin{equation}
\begin{split}
\mathcal{W}^{k_1} (\mathfrak{g}_1, f_\mathrm{sub}) &\simeq \mathrm{Com}\left( \pi_{\widetilde{H}_2}, \mathcal{W}^{k_2} (\mathfrak{g}_2) \otimes V_{\Z\sqrt{-1}} \right),\\
\mathcal{W}_{k_1} (\mathfrak{g}_1, f_\mathrm{sub}) &\simeq \mathrm{Com}\left( \pi_{\widetilde{H}_2}, \mathcal{W}_{k_2} (\mathfrak{g}_2) \otimes V_{\Z\sqrt{-1}} \right)
\end{split}
\end{equation}
hold.
\end{theorem}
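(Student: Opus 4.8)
The plan is to use the first Theorem above --- the isomorphism of the Heisenberg cosets --- as the essential input, and to deduce the Kazama--Suzuki isomorphisms by bookkeeping of charged sectors. Write $J_i$ for the generator of $\pi_{H_i}\subset \mathcal W^{k_i}(\mathfrak g_i,\cdot)$, normalized by $(J_i)_{(1)}J_i = s_i\,|0\rangle$, and set $C:=\mathrm{Com}(\pi_{H_1},\mathcal W^{k_1}(\mathfrak g_1,f_\mathrm{sub}))\simeq \mathrm{Com}(\pi_{H_2},\mathcal W^{k_2}(\mathfrak g_2))$. Each $\mathcal W$-algebra is $\Z$-graded by $J_i$-charge and, being strongly generated by $C$ together with the two charged fields $G^\pm$ of charge $\pm1$ and conformal weight $\tfrac{n+1}{2}$ (resp.\ $\tfrac{n+2}{2}$), decomposes as $\bigoplus_{m\in\Z}C_m\otimes \pi_{s_i,m}$, where $\pi_{s_i,m}$ is the charge-$m$ Fock module and $C_m$ is the $C$-module generated by the $m$-fold normally ordered product of $G^+$, the charge $-m$ sectors arising symmetrically from $G^-$. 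First I would show that the modules $C_m$ are the \emph{same} for $i=1,2$: the coset isomorphism arises in the body from a common realization inside a product of a lattice vertex algebra and free Heisenberg fields, in which the charged sectors of both $\mathcal W$-algebras are cut out by the same screening operators and hence coincide as $C$-modules, the only difference being the Heisenberg Fock factor.

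Given this common extension datum $\{C_m\}$, the forward isomorphism becomes a computation with rank-one Heisenbergs. In $\mathcal W^{k_1}(\mathfrak g_1,f_\mathrm{sub})\otimes V_\Z$ there are two commuting Heisenberg fields, $J_1$ and the generator $c$ of $V_\Z$ with $c_{(1)}c=|0\rangle$; since $V_\Z$ is a vertex \emph{super}algebra, the vectors $e^{\pm c}$ are odd of conformal weight $\tfrac12$. I would take $\pi_{\widetilde H_1}$ to be generated by the combination $\widetilde J_1:=J_1-c$, which makes the dressed fields $G^\pm\otimes e^{\pm c}$ have $\widetilde J_1$-charge zero so that they descend to the coset, while the orthogonal combination $\widehat J$ generates a rank-one Heisenberg inside $\mathrm{Com}(\pi_{\widetilde H_1},\mathcal W^{k_1}\otimes V_\Z)$ whose level I compute from $s_1$ and $\langle c,c\rangle=1$. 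Imposing $\widetilde J_1$-charge zero locks the $V_\Z$-charge to the $J_1$-charge, converting the Fock factor $\pi_{s_1,m}\otimes V_\Z$ into $\pi_{s_2,m}$, and I would verify that the level of $\widehat J$ equals the Heisenberg level $s_2$ of $\mathcal W^{k_2}(\mathfrak g_2)$ using \eqref{introeq: ell} together with the weights and charges recorded above. The resulting coset is therefore $\bigoplus_m C_m\otimes\pi_{s_2,m}$, strongly generated by $C$, the Heisenberg $\widehat J$, and the odd fields $G^\pm\otimes e^{\pm c}$ of weight $\tfrac{n+2}{2}$; matching these generators and their operator products against those of $\mathcal W^{k_2}(\mathfrak g_2)$ produces a homomorphism $\mathcal W^{k_2}(\mathfrak g_2)\to \mathrm{Com}(\pi_{\widetilde H_1},\mathcal W^{k_1}\otimes V_\Z)$, which is an isomorphism because both sides have identical graded decomposition over $\pi\otimes C$.

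The inverse isomorphism is proven by the same argument with the two algebras exchanged and $V_\Z$ replaced by the negative-definite lattice $V_{\Z\sqrt{-1}}$, whose generator $d$ has $\langle d,d\rangle=-1$: now $e^{\pm d}$ is again odd while $G^\pm$ are odd, so $G^\pm\otimes e^{\pm d}$ is even of weight $\tfrac{n+1}{2}$, matching the charged fields of the subregular algebra, and the norm $-1$ precisely undoes the previous level shift, so that composing the two Kazama--Suzuki cosets returns the original Heisenberg and charged data and the two maps are mutually inverse. For the simple quotients I would argue that the construction commutes with passage to the maximal ideal: by the simple-quotient case of the first Theorem, $\mathrm{Com}(\pi_{H_1},\mathcal W_{k_1}(\mathfrak g_1,f_\mathrm{sub}))\simeq\mathrm{Com}(\pi_{H_2},\mathcal W_{k_2}(\mathfrak g_2))$, and since tensoring with the simple lattice vertex superalgebra $V_\Z$ and taking a Heisenberg coset preserves simplicity under semisimplicity of the Heisenberg action, $\mathrm{Com}(\pi_{\widetilde H_1},\mathcal W_{k_1}(\mathfrak g_1,f_\mathrm{sub})\otimes V_\Z)$ is simple; the surjection from $\mathcal W^{k_2}(\mathfrak g_2)$ above then factors through $\mathcal W_{k_2}(\mathfrak g_2)$, and symmetrically for the inverse.

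The main obstacle is the assertion that the charged $C$-modules $C_m$ genuinely agree for the two algebras: the isomorphism of the charge-zero cosets alone does not force the charged extensions to coincide, so I must extract from the proof of the first Theorem a common free field (screening) realization in which both $\mathcal W$-algebras appear as extensions of one and the same $\pi\otimes C$ by the identical system of modules, or equivalently characterize $C_1$ as the unique simple current $C$-module of prescribed minimal conformal weight and monodromy charge and show both algebras realize it. A second subtlety is that Theorem~\ref{cor:coset} excludes only $K_i=\{-h_i^\vee\}$, whereas the coset isomorphism is supplied only away from the larger set $S_i=\{-h_i^\vee,x_i\}$; at the extra level $x_i$ the coset input is unavailable, so the isomorphism there must be obtained directly from the free field computation (whose screening realization persists for all $k_1\neq -h_1^\vee$) or by a flatness argument in the parameter $k_1$. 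Establishing this rigidity uniformly in $k_1$, and verifying the level match $s_2=s_2(s_1)$ in both the $A$ and the $B$ cases, is where the real work lies.
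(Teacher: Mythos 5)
Your reduction of the statement to the Heisenberg coset isomorphism does not close: the step you yourself flag as ``the main obstacle'' is not a technical loose end but is essentially the whole theorem. Knowing $\mathrm{Com}(\pi_{H_1},\mathcal W^{k_1}(\mathfrak g_1,f_{\mathrm{sub}}))\simeq\mathrm{Com}(\pi_{H_2},\mathcal W^{k_2}(\mathfrak g_2))$ only identifies the charge-zero sectors; the full algebras are extensions of $\pi\otimes C$ by the charged modules $C_m$ together with extension data (the vertex operators among the sectors), and neither the isomorphism of the $C_m$ as $C$-modules nor the agreement of the extension structures follows from the coset statement. Your proposed repairs are either circular (``extract a common free field realization of both full algebras'' is precisely the content of the paper's Sections \ref{sec:Wfree}--\ref{sec:newduality}) or rely on uniqueness of simple current extensions in a vertex tensor category of $C$-modules, which is not available at generic or arbitrary non-critical levels. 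The paper does not deduce Theorem \ref{thm:KS} from Theorem \ref{thm:coset} at all: both are proved in parallel from the screening characterizations of the \emph{full} algebras (Theorems \ref{thm:sub-W} and \ref{thm:super-W}), by embedding $\mathcal W^{k_2}(\mathfrak g_2)\otimes V_{\Z\sqrt{-1}}$ into a lattice-times-Heisenberg free field algebra, computing the commutant of $\pi_{\widetilde H_2}$ there, and applying the Virasoro Feigin--Frenkel duality Fock-module by Fock-module across all charge sectors simultaneously (the computation with $\bigoplus_n(\cdots)_{(-1)}|n(\phi+\psi)\rangle$); this is what handles the charged sectors that your argument leaves open.

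A second, independent failure occurs at the levels $k_i=x_i$: there the Heisenberg field $H_i$ becomes isotropic, so the decomposition $\mathcal W=\bigoplus_m C_m\otimes\pi_{s_i,m}$ into Fock modules that your bookkeeping rests on does not exist, while the theorem explicitly includes these levels (it excludes only $K_i$, not $S_i$). The paper's passage from generic to all non-critical levels uses the continuity argument of Lemma \ref{continuity} applied to the diagonal Heisenberg $\pi_{\widetilde H_i}$, which stays non-degenerate at $x_i$; a ``flatness argument'' in your framework would still need the sector decomposition as input and so cannot reach these points. By contrast, your treatment of the simple quotients (semisimplicity of the Heisenberg action plus simplicity of Heisenberg cosets of simple vertex superalgebras) is sound and is exactly how the paper obtains Corollary \ref{isom: simple quotients} from Corollary \ref{Heisenberg coset}.
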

We expect that these isomorphisms will turn out to be very efficient in exploring the representation theory of the principal $\mathcal W$-superalgebras in terms of the one of the subregular $\mathcal W$-algebras. As a first important step, we deduce $C_2$-cofiniteness and rationality results. 

\subsection{Rationality and $C_2$-cofiniteness}

A well-known result of Tomoyuki Arakawa is that principal $\mathcal W$-algebras at non-degenerate admissible levels are $C_2$-cofinite \cite{Ar3} and rational \cite{Ar4}. The $C_2$-cofininteness of certain non-principal $\mathcal W$-algebras including subregular $\mathcal W$-algebras of type $A$ and $B$ at certain admissible levels has also been proven in \cite{Ar3}. They are also conjectured to be rational and this has been proven for type $A$ in \cite{AvE} (including earlier results \cite{Ar2, CL2} for subregular $\mathcal{W}$-algebras of type $\mathfrak{sl}_3$ and $\mathfrak{sl}_4$).

In contrast to this, the rationality and $C_2$-cofiniteness of principal $\mathcal W$-superalgebras and $\mathcal W$-superalgebras in general is open. We only found \cite{Ad3, Ad1} that used a coset construction to prove the rationality of the $N=1$ and $N=2$ super Virasoro algebras. Note that the $N=1$ super Virasoro algebra is the principal $\mathcal W$-superalgebra of $\mathfrak{osp}(1|2)$. 

We explain in Sections \ref{sec:C2} and \ref{sec:rational} how one can deduce $C_2$-cofininiteness results in the Kazama-Suzuki coset construction. This amounts to essentially showing that certain lattice vertex operator algebras appear as cosets. The key observation for this is our Lemma \ref{lemma:C2}.
The $C_2$-cofiniteness results follow from \cite[Lemma 4.6]{CKLR} (which is based on \cite{Mi}) modulo a lattice vertex operator algebra assumption. But the latter holds in our cases due to Lemma \ref{lemma:C2}. 
Rationality of Heisenberg cosets is \cite[Theorem 4.12]{CKLR}, again modulo a lattice vertex operator algebra assumption, but that assumption holds due to  some Jacobi form argument \cite{Mason}. Then Corollary \ref{cor:C2cofiniteness} and \ref{cor:rational} assert the following:
\begin{corollary} ${}$
\begin{enumerate}
\item
Let $\ell=-n +\frac{n}{u}$  with  $u\in \mathbb Z_{\geq n}$ and $(u, n)=1$. Then $\mathcal{W}_{\ell} (\mathfrak{sl}(1|n+1))$  and  $\mathrm{Com}\left( \pi_{H_2}, \mathcal{W}_{\ell} (\mathfrak{sl}(1|n+1)) \right)$ are rational and $C_2$-cofinite.
\item
Let $\ell=-n +\frac{2n-1}{2u}$  with  $u\in \mathbb Z_{\geq 2n}$ and $(u, 2n-1)=1$. Then $\mathcal{W}_{\ell} (\mathfrak{osp}(2|2n))$  and $\mathrm{Com}\left( \pi_{H_2}, \mathcal{W}_{\ell} (\mathfrak{osp}(2|2n)) \right)$ are $C_2$-cofinite.
\item
Let $\ell=-n +\frac{n}{u}$  with  $u\in \mathbb Z_{\geq 2n+1}$ and $(u, 2n)=1$. Then $\mathcal{W}_{\ell} (\mathfrak{osp}(2|2n))$ and $\mathrm{Com}\left( \pi_{H_2}, \mathcal{W}_{\ell} (\mathfrak{osp}(2|2n)) \right)$ are $C_2$-cofinite.
\end{enumerate}
\end{corollary}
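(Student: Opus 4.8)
The plan is to reduce both rationality and $C_2$-cofiniteness on the principal $\mathcal W$-superalgebra side to the corresponding (already known) statements for the simple subregular $\mathcal W$-algebra at the dual level, using the coset isomorphisms above. Fix a level $\ell$ as in one of the three cases and let $k_1$ be the dual level determined by \eqref{introeq: ell} (so $\ell$ plays the role of $k_2$). First I would carry out the numerology: a direct substitution into \eqref{introeq: ell} shows that in case (1) one gets $k_1=-h_1^\vee+u/n$ for $\mathfrak{g}_1=\mathfrak{sl}_{n+1}$, while in cases (2), (3) one gets $k_1=-h_1^\vee+u/(2n-1)$ and $k_1=-h_1^\vee+u/(2n)$ for $\mathfrak{g}_1=\mathfrak{so}_{2n+1}$. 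The coprimality and lower-bound hypotheses on $u$ are exactly what is needed to place $k_1$ in the range of non-degenerate admissible levels to which Arakawa's theorems apply; in particular $\mathcal W_{k_1}(\mathfrak{g}_1,f_\mathrm{sub})$ is $C_2$-cofinite by \cite{Ar3}, and in type $A$ it is moreover rational by \cite{AvE}. The appearance of two separate denominators $2n-1$ and $2n$ in type $B$ reflects the two families of admissible fractions governed by the lacity $r=2$, which is why cases (2) and (3) are stated separately.

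With this input I would establish the claims in two moves. For $\mathcal W_\ell(\mathfrak{g}_2)$ itself I would invoke the Kazama-Suzuki isomorphism of Theorem \ref{cor:coset}, namely $\mathcal W_{\ell}(\mathfrak{g}_2)\simeq \mathrm{Com}(\pi_{\widetilde H_1},\,\mathcal W_{k_1}(\mathfrak{g}_1,f_\mathrm{sub})\otimes V_\Z)$. Since $V_\Z$ is a rational, $C_2$-cofinite lattice vertex operator algebra, the tensor product $\mathcal W_{k_1}(\mathfrak{g}_1,f_\mathrm{sub})\otimes V_\Z$ inherits $C_2$-cofiniteness (and rationality in type $A$). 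The commutant by the Heisenberg $\pi_{\widetilde H_1}$ then preserves these properties by \cite[Lemma 4.6]{CKLR} (for $C_2$-cofiniteness, following \cite{Mi}) and \cite[Theorem 4.12]{CKLR} (for rationality), provided the relevant lattice vertex operator algebra assumption holds; this assumption is supplied by Lemma \ref{lemma:C2} (and, for rationality, by the Jacobi form argument of \cite{Mason}). This yields $C_2$-cofiniteness of $\mathcal W_\ell(\mathfrak{g}_2)$ in all three cases and rationality in case (1).

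Once $\mathcal W_\ell(\mathfrak{g}_2)$ is known to be $C_2$-cofinite (resp.\ rational), I would obtain the same for its Heisenberg coset $\mathrm{Com}(\pi_{H_2},\mathcal W_\ell(\mathfrak{g}_2))$ by applying the same pair of commutant theorems \cite[Lemma 4.6, Theorem 4.12]{CKLR} directly to the Heisenberg subalgebra $\pi_{H_2}$, again using Lemma \ref{lemma:C2} and \cite{Mason} to verify the lattice assumption. Alternatively, the coset statement can be read off from the first main Theorem, which identifies $\mathrm{Com}(\pi_{H_2},\mathcal W_\ell(\mathfrak{g}_2))$ with $\mathrm{Com}(\pi_{H_1},\mathcal W_{k_1}(\mathfrak{g}_1,f_\mathrm{sub}))$, and the latter is $C_2$-cofinite (resp.\ rational in type $A$) by the very same commutant argument applied on the subregular side.

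The main obstacle is the verification of the lattice vertex operator algebra assumption underlying the commutant theorems: to apply \cite[Lemma 4.6]{CKLR} one must know that, after extracting the Heisenberg factor, the large algebra is a genuine lattice-type extension of (Heisenberg)$\,\otimes\,$(coset), i.e.\ that the module lattice indexing the decomposition really gives a lattice vertex operator algebra. This is precisely the content of Lemma \ref{lemma:C2}, and it is where the real work lies; the rationality refinement needs in addition the modularity input of \cite{Mason}. A secondary point to be careful about is that rationality is claimed only in type $A$: this is forced by the current state of knowledge, since rationality of the subregular $\mathcal W$-algebra is established in \cite{AvE} for type $A$ but not yet for type $B$, so in cases (2) and (3) the argument delivers only $C_2$-cofiniteness.
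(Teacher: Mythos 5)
Your case (1) is essentially the paper's own argument: there the simple subregular algebra $\mathcal W_{k_1}(\mathfrak{sl}_{n+1},f_{\mathrm{sub}})$ at the dual admissible level is rational and $C_2$-cofinite by \cite{Ar3, AvE}, so Mason's theorem applies, the Heisenberg subalgebra extends to a positive-definite lattice vertex operator algebra, and Miyamoto's theorem together with \cite[Theorem 4.12]{CKLR} passes rationality and $C_2$-cofiniteness down to the Heisenberg coset. The paper packages exactly this as Theorem \ref{thm:rationality} and its super extension Corollary \ref{remark:super}, applied to $\mathcal W_{k_1}(\mathfrak{sl}_{n+1},f_{\mathrm{sub}})\otimes V_{\Z}$ for the superalgebra and to $\mathcal W_{k_1}(\mathfrak{sl}_{n+1},f_{\mathrm{sub}})$ itself for the coset, so here your route and the paper's coincide.

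The type $B$ cases (2) and (3) contain a genuine gap. You propose to pass $C_2$-cofiniteness \emph{downward}, from $\mathcal W_{k_1}(\mathfrak{so}_{2n+1},f_{\mathrm{sub}})\otimes V_\Z$ to its diagonal Heisenberg coset $\mathcal W_\ell(\mathfrak{osp}(2|2n))$, and to verify the required lattice assumption via Lemma \ref{lemma:C2}. But Lemma \ref{lemma:C2} runs in the opposite direction: its hypothesis is that the \emph{coset} $C=\mathrm{Com}(\pi,V)$ is already $C_2$-cofinite, and only then does it produce the lattice $V_N=\mathrm{Com}(C,V)$ and conclude that $V$ is $C_2$-cofinite. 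Invoking it to supply the lattice assumption for your downward step presupposes the $C_2$-cofiniteness of $\mathcal W_\ell(\mathfrak{osp}(2|2n))$, which is what you are trying to prove. The only general tool that produces the lattice extension of a Heisenberg subalgebra is Mason's theorem \cite{Mason}, and it requires rationality of the ambient algebra --- unavailable for $\mathcal W_{k_1}(\mathfrak{so}_{2n+1},f_{\mathrm{sub}})$, which is only known to be $C_2$-cofinite. The paper therefore runs the argument in the reverse order and with a different key lemma: it realizes $\mathcal W_{k_1}(\mathfrak{so}_{2n+1},f_{\mathrm{sub}})$ as the inverse Kazama--Suzuki coset $\mathrm{Com}\bigl(\pi_{\widetilde H_2},\mathcal W_\ell(\mathfrak{osp}(2|2n))\otimes V_{\Z\sqrt{-1}}\bigr)$ and exploits the explicit two-Heisenberg decomposition of that construction (Lemma \ref{lem:C2lat}) to show that the double commutant of $D=\mathrm{Com}(\pi_{H_2},\mathcal W_\ell(\mathfrak{osp}(2|2n)))$ inside the $C_2$-cofinite algebra $\mathcal W_{k_1}(\mathfrak{so}_{2n+1},f_{\mathrm{sub}})$ is a positive-definite rank-one lattice vertex operator algebra; Miyamoto's result \cite{Mi} then gives $C_2$-cofiniteness of the coset $D$ first (Corollary \ref{cor:C2cofiniteness}), and only afterwards is Lemma \ref{lemma:C2} applied, in its correct upward direction, to deduce $C_2$-cofiniteness of $\mathcal W_\ell(\mathfrak{osp}(2|2n))$ itself (Corollary \ref{cor:rational}). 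Your type $B$ argument needs to be restructured along these lines; as written, the step ``the commutant by $\pi_{\widetilde H_1}$ preserves $C_2$-cofiniteness'' has no justification in the non-rational setting.
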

Note that rationality of Heisenberg cosets for the cases $\mathfrak{sl}_3$ and $\mathfrak{sl}_4$ has been treated in \cite{ACL1, CL2}.

\subsection{$\mathcal W_{r_1, r_2, r_3}$-algebras}

A famous result of Schiffmann and Vasserot asserts that the principal $\mathcal W$-algebra of $\mathfrak{gl}_r$ acts on the equivariant cohomology of the moduli space of rank $r$ torsion free coherent sheaves on $\mathbb{CP}^2$ with some framing at $\infty$ \cite{SV}. This result was recently generalized to the moduli space of spiked instantons of Nekrasov \cite{RSYZ} and the resulting $\mathcal W_{r_1, r_2, r_3}$-algebra is characterized as the intersection of certain screening operators on a Heisenberg vertex algebra. The $\mathcal W_{r_1, r_2, r_3}$-algebra is defined in \cite{BFM} and physics \cite{PR} conjectures that it 
 is isomorphic to the $Y_{r_1, r_2, r_3}$-algebra of \cite{GR}. These $Y_{r_1, r_2, r_3}$-algebras are defined to be cosets of $\mathcal W$-algebras and superalgebras (times a Heinsenberg vertex algebra of rank one) and enjoy a triality of isomorphisms if one of the labels is zero \cite{CL4}. If two of the labels are zero, then the $Y_{r_1, r_2, r_3}$-algebra  and $\mathcal W_{r_1, r_2, r_3}$-algebra are a principal $\mathcal W$-algebra of $\mathfrak{gl}_r$ (where $r$ is the non-zero label). If one of the labels is zero, another one is one and the remaining one is $r$, then the resulting $Y_{r_1, r_2, r_3}$-algebra is the Heisenberg coset of the subregular $\mathcal W$-algebra of $\mathfrak{sl}_r$ (again times a rank one Heisenberg vertex algebra) and the screening realization of \cite{BFM} of the $\mathcal W_{r_1, r_2, r_3}$-algebra is precisely the one that we derive in this paper. This means that as a byproduct of our work, we also prove in this instance the conjecture that $Y_{r_1, r_2, r_3}$-algebras and  $\mathcal W_{r_1, r_2, r_3}$-algebras coincide.

\subsection{Applications}

There are a few possible applications. Firstly and as mentioned before, the Kazama-Suzuki coset relation between the simple affine vertex algebra $L_k(\mathfrak{sl}_2)$ and the simple $N=2$ super Virasoro algebra at central charge $3k/(k+2)$ has been used intensively to study the relation of the representation categories of the two  \cite{Ad2, Ad3, Sato1, KoSa, CLRW}. Similar studies can and should be accomplished for the better understanding of the representation theory of the principal $\mathcal W$-superalgebras. Of course the rational cases are the easiest one. We note that the $\mathcal B^{(p)}$-algebras of \cite{CRW} have recently been proven to be isomorphic to $\mathcal W_{2-p-p^{-1}}(\mathfrak{sl}_{p-1}, f_{\mathrm{sub}})$ \cite{ACGY}. These are at admissible levels and the representation theory has been considered in \cite{ACKR}. One can use these results to also study the representation theory of 
$\mathcal W_{3-p + (p-1)^{-1}}(\mathfrak{sl}(p-1|1))$. Beyond this there have appeared interesting realizations of the subregular $\mathcal W$-algebra at the critical level \cite{CGL1, CGL2, GK} together with some connection to geometry and physics. At the critical level the subregular $\mathcal W$-algebra has a large center.  Recall that in the case of affine vertex algebras at the critical level, the spectrum of the center is isomorphic to the space of opers on the formal disc of the Langlands dual Lie algebra \cite{F2}. This can be seen as Feigin-Frenkel duality at the critical level where the dual level goes to infinity. It is surely interesting to study the behaviour of our duality in the large level/critical level limit. Another related direction that is worth further investigation is to show these types of dualities for the corresponding finite $\mathcal W$-(super)algebras. 
In general we aim to investigate dualities between cosets of seemingly different $\mathcal W$-(super)algebras in best possible generality. Our present work is a  step in this direction. In order to derive further dualities with our methods we need a better understanding of free field realizations and screening charges of 
$\mathcal W$-superalgebras. A first step are those of affine vertex operator superalgebras which is work in progress. There are already works on this direction, see, for example, \cite{IK,R, IMP1, IMP2} for earlier results. Another aim is to prove that the $\mathcal W_{r_1, r_2, r_3}$-algebras and $Y_{r_1, r_2, r_3}$-algebras coincide. Given the geometrically meaning of the $\mathcal W_{r_1, r_2, r_3}$-algebras we consider this to be an important problem.
Free field realizations can also be used to study correspondences between conformal field theories with $\mathcal W$-algebra symmetries and our new free field realizations are used to derive correspondences between principal and subregular $\mathcal W$-algebras of type A and more \cite{CGHL}.

\subsection{Outline}

The proofs of the main theorems go in a few steps. Firstly, we need to characterize the $\mathcal W$-(super)algebras in terms of joint kernels of certain screening charges acting on some free fied algebras. For this, we utilize a free field realization of $V^{\kappa}(\mathfrak{gl}(1|1))$ developed in Section \ref{sec:gl11}. Section \ref{sec:Wfree} is then on free field realizations of the $\mathcal W$-(super)algebras that we study. 
In Section \ref{sec:newduality} we then relate the screening charges of the principal $\mathcal W$-superalgebra to the one of the corresponding subregular $\mathcal W$-algebra. This allows us to prove our main theorems for the univeral $\mathcal W$-(super)algebras at generic level. Next, in Section \ref{sec:Heisenbergcoset} we adapt the theory of Heisenberg cosets developed in \cite{CKLR} to our setting so that we can also determine the cosets of simple quotients and very importantly $C_2$-cofiniteness and rationality. For the latter two the most crucial step is to show that certain lattice vertex operator algebras appear. These results are then applied to our cosets. 

\subsection*{Acknowledgements}
T.C. appreciates many frutiful discussions on related topics with Boris Feigin and Andrew Linshaw. S.N. appreciates discussions on the simplicity of Heisneberg cosets with Yuto Moriwaki. We also thank Tomoyuki Arakawa for useful comments. 
T.C. is supported by NSERC $\#$RES0048511.
N. G is supported by JSPS Overseas Research Fellowships Grant Number 1120358.
S.N. is supported by the Program for Leading Graduate Schools, MEXT, Japan and by JSPS KAKENHI Grant Number 20J10147.
This work was supported by World Premier International Research Center Initiative (WPI Initiative), MEXT, Japan.
A part of this work is done while S.N. was staying at University of Alberta. He is grateful to the institute for the hospitality.

\section{Free field realization of $V^{\kappa}(\mathfrak{gl}(1|1))$}\label{sec:gl11}
We study a free field realization of the universal affine vertex superalgebra $V^\kappa(\mathfrak{gl}(1|1))$. We also realize it as the kernel of a certain screening operator.
\subsection{Heisenberg vertex algebra}\label{Sect: Heisenberg}

We follow the definition of vertex superalgebras in \cite{K2} and also use the language of the $\lambda$-bracket, cf.\ \cite{DK}. For a vertex superalgebra $V$, we denote by $| 0 \rangle$ the vacuum vector, by $\partial$ the translation operator, and by $Y(a,z)=a(z)=\sum_{n \in \Z} a_{(n)} z^{-n-1}$ the field corresponding to $a\in V$.

For a finite dimensional commutative Lie algebra $\mathfrak{h}$ over $\C$ with a symmetric bilinear form $\kappa$, we denote by $\widehat{\mathfrak{h}}_\kappa := \mathfrak{h}[t, t^{-1}] \oplus \C K$  the affine Lie algebra of $\mathfrak{h}$, whose Lie bracket is defined by
\begin{align*}
[h_{(m)}, h'_{(n)}] = m\kappa(h|h')\delta_{m+n, 0}K,\quad
[K, \widehat{\mathfrak{h}}_\kappa] =0,\quad h ,h' \in \mathfrak{h},\ m, n \in \Z,
\end{align*}
where $h_{(m)} = h t^m$. Define a $\widehat{\mathfrak{h}}_\kappa$-module $\pi^\kappa$ by
\begin{align*}
\pi^\kappa_\mathfrak{h} := U\left(\widehat{\mathfrak{h}}_\kappa\right) \otimes_{U\left(\mathfrak{h}[t] \oplus \C K\right)} \C,
\end{align*}
where $\C$ is regarded as a $\mathfrak{h}[t] \oplus \C K$-module, on which $\mathfrak{h}[t]$ acts trivially and $K$ acts as $1$, and $U(\mathfrak{a})$ denotes the universal enveloping algebra of a Lie superalgebra $\mathfrak{a}$. There is a unique vertex algebra structure on $\pi^\kappa_\mathfrak{h}$ with the vacuum vector $| 0 \rangle = 1 \otimes 1$ and
\begin{align*}
Y(h_{(-1)} |0 \rangle, z) = h(z) := \sum_{n \in \Z} h_{(n)} z^{-n-1},\quad
h \in \mathfrak{h}.
\end{align*}
It is called the Heisenberg vertex algebra associated with $\mathfrak{h}$ at level $\kappa$. More generally, for $\mu\in\mathfrak{h}^*$, define a $\widehat{\mathfrak{h}}_\kappa$-module by
\begin{align*}
\pi^\kappa_{\mathfrak{h}, \mu} := U\left(\widehat{\mathfrak{h}}_\kappa\right) \otimes_{U\left(\mathfrak{h}[t] \oplus \C K\right)} \C_{\mu},
\end{align*}
where $\C_\mu:=\C$ is regarded as a $\mathfrak{h}[t] \oplus \C K$-module, on which $h_{(n)}$, ($n \geq 0$), acts as $\mu(h)\delta_{n,0}$ and $K$ acts as $1$. It has a unique $\pi^\kappa_{\mathfrak{h}}$-module structure coming from the $\hat{\mathfrak{h}}_\kappa$-modules structure, called the highest weight $\pi^\kappa_{\mathfrak{h}}$-module with highest weight $\mu$. If $\kappa$ is non-degenerate, $\pi^\kappa_\mathfrak{h}$ is simple, called a non-degenerate Heisenberg vertex algebra. The dimension $\dim\mathfrak{h}$ of $\mathfrak{h}$ is equal to that of subspace of $\pi^\kappa_\mathfrak{h}$ with conformal degree $1$, called the rank of $\pi^\kappa_\mathfrak{h}$.

If we fix a non-degenerate bilinear form $(\cdot|\cdot)$ on $\mathfrak{h}$, then $\mathfrak{h}$ is identified with $\mathfrak{h}^*$ by $h \mapsto (\nu(h) \colon h' \mapsto (h|h'))$. For $\kappa=k(\cdot|\cdot)$, we write $\pi^k_\mathfrak{h}$, (resp.\ $\pi^k_{\mathfrak{h}, \mu}$) instead of $\pi^{\kappa}_\mathfrak{h}$, (resp.\ $\pi^{\kappa}_{\mathfrak{h}, \mu}$), and denote by $\alpha(z) = \sum_{n\in\Z}\alpha_{(n)}z^{-n-1} := \nu^{-1}(\alpha)(z)$ for $\alpha \in \mathfrak{h}^*$. We call $\pi^k_\mathfrak{h}$ the Heisenberg vertex algebra associated with $\mathfrak{h}$ at level $k$.

\subsection{Wakimoto representations of $\widehat{\mathfrak{gl}}(1|1)_\kappa$}\label{sec:Wak-gl11}

Let $\mathfrak{gl}(1|1)$ be the Lie superalgebra $\End(\C^{1|1})$ with Lie superbracket
\begin{align*}
[ x , y ] = x y - (-1)^{\bar{x} \bar{y}} y x,\quad
x, y \in \End(\C^{1|1}),
\end{align*}
where $\bar{x} \in \Z_2 = \{\bar{0}, \bar{1}\}$ denotes the parity of $x \in \End(\C^{1|1})$. Let $\{E_{i,j}\}_{1 \leq i,j \leq 2}$ denote the elementary matrices of $\mathfrak{gl}(1|1) = \End(\C^{1|1})$, $\mathfrak{h} = \C E_{1,1} \oplus \C E_{2,2}$, $\mathfrak{n}_+ = \C E_{1,2}$ and $\mathfrak{n}_- = \C E_{2,1}$. Note that the parity of $E_{i, j}$ is $\overline{i+j}$. For an even supersymmetric invariant bilinear form $\kappa$ on $\mathfrak{gl}(1|1)$, there exist unique $k_1, k_2 \in \C$ such that
\begin{align*}
\kappa = k_1 \kappa_1 + k_2 \kappa_2,
\end{align*}
where $\kappa_1 ( x | y )= \operatorname{str}_{\C^{1|1}}(xy)$ and $\kappa_2 ( x | y ) = -\frac{1}{2}\operatorname{str}_{\mathfrak{gl}(1|1)}\left(\operatorname{ad}(x)\operatorname{ad}(y)\right)$, and $\operatorname{str}_V(?)$ denotes the supertrace over a vector superspace $V$. The non-zero parings of $\kappa$ are:
\begin{align*}
&\kappa ( E_{1,1}|E_{1,1} ) = k_1 + k_2,\quad
\kappa ( E_{2,2}|E_{2,2} ) = -k_1 + k_2,\\
&\kappa ( E_{1,1}|E_{2,2} ) = - k_2,\quad
\kappa ( E_{1,2}|E_{2,1} ) = k_1.
\end{align*}
Define $\chi_i \in \mathfrak{h}^*$ by
\begin{align*}
\chi_i(E_{j, j}) = (-1)^{i+1} \delta_{i, j},\quad
i, j \in \{1, 2\}.
\end{align*}
We identify $\mathfrak{h}$ with $\mathfrak{h}^*$ by $\kappa_1$, under which $E_{i, i}$ is identified with $\chi_i$, ($i = 1,2$), since $\kappa_1(E_{i,i}|E_{j,j}) = \chi_i(E_{j, j})$. Let $\pi^{\kappa-\kappa_2} := \pi^{\kappa-\kappa_2}_\mathfrak{h}$ be the Heisenberg vertex algebra associated with $\mathfrak{h}$ at level $\kappa-\kappa_2$, which is freely generated by fields $\chi_i(z)$, ($i = 1, 2$), with OPEs
\begin{align}\label{eq:chi-OPE}
\chi_i(z)\chi_j(w) \sim \frac{(\kappa-\kappa_2)(E_{i, i}|E_{j, j})}{(z-w)^2},\quad
i, j=1, 2.
\end{align}

Let $\widehat{\mathfrak{gl}}(1|1)_\kappa:=\mathfrak{gl}(1|1) [t, t^{-1}]\oplus \C K$ be the affine Lie superalgebra with Lie superbracket
\begin{align*}
\begin{split}
&[x_{(m)},y_{(n)}]=[x,y]_{(m+n)}+m\delta_{m+n,0}\kappa(x,y)K,\quad x,y\in \mathfrak{gl}(1|1),\ m,n\in\Z,\\
&[K,\widehat{\mathfrak{gl}}(1|1)_\kappa]=0,
\end{split}
\end{align*}
where $x_{(m)}=x t^m$ for $x\in \mathfrak{gl}(1|1)$, $m\in\mathbb{Z}$. Define a $\widehat{\mathfrak{gl}}(1|1)_\kappa$-module $V^\kappa\left(\mathfrak{gl}(1|1)\right)$ by
\begin{align*}
V^\kappa\left(\mathfrak{gl}(1|1)\right) := U\left(\widehat{\mathfrak{gl}}(1|1)_\kappa\right) \otimes_{U\left(\widehat{\mathfrak{gl}}(1|1)_{\kappa,+}\right)} \C,
\end{align*}
where $\C$ is regarded as a $\widehat{\mathfrak{gl}}(1|1)_{\kappa,+}(:=\mathfrak{gl}(1|1) [t] \oplus \C K)$-module, on which $\mathfrak{gl}(1|1) [t]$ acts trivially and $K$ acts as $1$. Then there is a unique vertex superalgebra structure on $V^\kappa\left(\mathfrak{gl}(1|1)\right)$ with the vacuum vector $| 0 \rangle = 1 \otimes 1$ and
\begin{align*}
Y(u_{(-1)} |0 \rangle, z) = u(z) := \sum_{n \in \Z} u_{(n)} z^{-n-1},\quad
u \in \mathfrak{gl}(1|1).
\end{align*}
It is called the universal affine vertex superalgebra of $\mathfrak{gl}(1|1)$ at level $\kappa$. If $k_1\neq 0$, then it admits the Segal-Sugawara conformal field
\begin{align}\label{eq: conformal vector}
\begin{split}
T(z):=\frac{1}{2k_1}\Big(\frac{1-k_2}{k_1}:(&E_{1,1}(z)+E_{2,2}(z))^2:+:E_{1,1}(z)^2:-:E_{2,2}(z)^2:)\\
&-:E_{1,2}(z)E_{2,1}(z):+:E_{2,1}(z)E_{1,2}(z):\Big),
\end{split}
\end{align}
whose central charge is 0, i.e., the superdimension of $\mathfrak{gl}(1|1)$.

Let $M_{\mathfrak{gl}(1|1)}$ be the $bc$-system vertex superalgebra, which is generated by odd fields $b(z), c(z)$ satisfying the OPEs
\begin{align*}
b(z)c(w) \sim \frac{1}{z-w},\quad
b(z)b(w) \sim 0 \sim c(z)c(w).
\end{align*}
The following proposition follows from direct calculations.
\begin{proposition}\label{prop:Wakimoto-gl(1|1)}
There exists a homomorphism of vertex superalgebras\\ $\rho \colon V^\kappa\left(\mathfrak{gl}(1|1)\right) \rightarrow  M_{\mathfrak{gl}(1|1)} \otimes \pi^{\kappa - \kappa_2}$, which satisfies
\begin{align}\label{map}
&\begin{aligned}
&E_{1,2}(z) \mapsto b(z),\quad
E_{2,1}(z) \mapsto :c(z)\left(\chi_1(z) + \chi_2(z)\right): + k_1 \partial c(z),\\
&E_{1,1}(z) \mapsto -:c(z) b(z): + \chi_1(z),\quad
E_{2,2}(z) \mapsto :c(z) b(z): + \chi_2(z).
\end{aligned}
\end{align}
\end{proposition}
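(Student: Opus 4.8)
The plan is to invoke the universal property of $V^\kappa(\mathfrak{gl}(1|1))$. Since it is freely generated by $\mathfrak{gl}(1|1)$, giving a vertex superalgebra homomorphism $\rho$ into a vertex superalgebra $W$ is equivalent to choosing, for each $u\in\mathfrak{gl}(1|1)$, a field $\rho(u)(z)$ in $W$ of the correct parity whose mutual OPEs reproduce the defining $\lambda$-brackets of $\widehat{\mathfrak{gl}}(1|1)_\kappa$, i.e. $\rho(x)(z)\rho(y)(w)\sim \frac{\rho([x,y])(w)}{z-w}+\frac{\kappa(x|y)}{(z-w)^2}$ for $x,y\in\mathfrak{gl}(1|1)$. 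Taking $W=M_{\mathfrak{gl}(1|1)}\otimes\pi^{\kappa-\kappa_2}$ and $\rho$ defined on generators by \eqref{map}, the statement reduces to checking the finitely many OPEs among the four fields $\rho(E_{1,2})=b$, $\rho(E_{2,1})=\,:\!c(\chi_1+\chi_2)\!:+\,k_1\partial c$, $\rho(E_{1,1})=-:\!cb\!:+\chi_1$, $\rho(E_{2,2})=\,:\!cb\!:+\chi_2$ against the structure constants of $\mathfrak{gl}(1|1)$ and the pairings of $\kappa$ listed above; note the parities ($b,c$ odd, $\chi_i$ even) are already respected.

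The computations use only $b(z)c(w)\sim (z-w)^{-1}\sim c(z)b(w)$, $b(z)b(w)\sim 0\sim c(z)c(w)$, the Heisenberg OPE \eqref{eq:chi-OPE}, and Wick's theorem for free superfields. I would first record $(\kappa-\kappa_2)(E_{i,i}|E_{j,j})$ from the displayed pairings together with $\kappa_2(E_{1,1}|E_{1,1})=\kappa_2(E_{2,2}|E_{2,2})=1$, $\kappa_2(E_{1,1}|E_{2,2})=-1$, obtaining $\chi_1(z)\chi_1(w)\sim (k_1+k_2-1)(z-w)^{-2}$, $\chi_2(z)\chi_2(w)\sim (-k_1+k_2-1)(z-w)^{-2}$ and $\chi_1(z)\chi_2(w)\sim (1-k_2)(z-w)^{-2}$. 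Two structural facts then streamline everything: the combination $\psi:=\chi_1+\chi_2$ is isotropic, $\psi(z)\psi(w)\sim 0$ (sum the three values), and the current $J:=\,:\!c(z)b(z)\!:$ satisfies $J(z)J(w)\sim (z-w)^{-2}$ with $J(z)c(w)\sim \frac{c(w)}{z-w}$ and $J(z)b(w)\sim \frac{-b(w)}{z-w}$, i.e. opposite $U(1)$-charges on $c$ and $b$.

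With these in hand I would run through the ten OPEs. Since the $J$- and $\chi$-parts live in different tensor factors and do not cross-contract, the Cartan--Cartan OPEs $\rho(E_{i,i})(z)\rho(E_{j,j})(w)$ have no simple pole and reduce to $\pm J(z)J(w)+\chi_i(z)\chi_j(w)$, which combine to the required double poles $k_1+k_2$, $-k_1+k_2$ and $-k_2$. The OPE $\rho(E_{1,2})(z)\rho(E_{2,1})(w)=b(z)\big(:\!c\psi\!:+\,k_1\partial c\big)(w)$ gives $\frac{\psi(w)}{z-w}+\frac{k_1}{(z-w)^2}$, matching $[E_{1,2},E_{2,1}]=E_{1,1}+E_{2,2}$ (whose image is $\psi$) and $\kappa(E_{1,2}|E_{2,1})=k_1$, the double pole coming precisely from the correction term $k_1\partial c$. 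The nilpotent self-OPEs $\rho(E_{1,2})(z)\rho(E_{1,2})(w)$ and $\rho(E_{2,1})(z)\rho(E_{2,1})(w)$ vanish, using $c(z)c(w)\sim 0$ and the isotropy of $\psi$; the adjoint actions $\rho(E_{i,i})(z)\rho(E_{1,2})(w)$ give $\pm\frac{b(w)}{z-w}$ directly from the charge of $b$ under $J$.

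The main obstacle, and the step I would treat most carefully, is the pair of mixed OPEs $\rho(E_{i,i})(z)\rho(E_{2,1})(w)$. Here one must verify that the right-hand side closes \emph{exactly} on $\pm\rho(E_{2,1})$ with no residual double pole: the contraction of $\chi_1$ (resp.\ $\chi_2$) with $\psi$ produces $\chi_1(z)\psi(w)\sim k_1(z-w)^{-2}$ (resp.\ $\chi_2(z)\psi(w)\sim -k_1(z-w)^{-2}$), contributing a spurious term $\pm k_1 c(w)(z-w)^{-2}$, while the action of $\mp J$ on the correction $k_1\partial c$ contributes $\mp k_1 c(w)(z-w)^{-2}$ alongside the wanted $\mp k_1\partial c(w)(z-w)^{-1}$. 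These double poles cancel precisely because the coefficient is $k_1=\kappa(E_{1,2}|E_{2,1})$ and $(\kappa-\kappa_2)(E_{1,1}|\psi)=k_1$, so the normal-ordered product and the derivative correction reassemble into $\pm\rho(E_{2,1})$; this is exactly what forces the term $k_1\partial c$ in \eqref{map}. The only remaining subtlety is consistent sign bookkeeping for the odd fields in the double contractions. As every bracket is at most quadratic in the free fields, each OPE is a short finite computation, and by the reconstruction theorem their correctness suffices to conclude that $\rho$ is a well-defined homomorphism of vertex superalgebras.
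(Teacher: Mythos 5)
Your proposal is correct and is essentially the paper's own argument: the paper simply states that the proposition ``follows from direct calculations,'' i.e.\ from verifying the OPEs of the images of the generators against the $\widehat{\mathfrak{gl}}(1|1)_\kappa$ relations, which is precisely the computation you carry out (and your bookkeeping — the isotropy of $\chi_1+\chi_2$, the pairings $\chi_i(z)(\chi_1+\chi_2)(w)\sim \pm k_1(z-w)^{-2}$, and the cancellation of the spurious double poles against the $k_1\partial c$ correction — checks out). The only difference is that you make explicit the standard reduction, via freeness of $V^\kappa(\mathfrak{gl}(1|1))$, from a homomorphism to a finite OPE check, which the paper leaves implicit.
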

\noindent
We denote a $V^\kappa\left(\mathfrak{gl}(1|1)\right)$-module $M_{\mathfrak{gl}(1|1)} \otimes \pi^{\kappa - \kappa_2}$ by $\mathbb{W}^\kappa$. Note that $T(z)$ maps to
\begin{align*}
:\partial c(z)\ b(z):+\frac{1-k_2}{2k_1^2}:(\chi_1+\chi_2)(z)^2:+\frac{1}{2k_1}\left(:\chi_1(z)^2:-:\chi_2(z)^2:+\partial(\chi_1+\chi_2)(z)\right).
\end{align*}
\begin{lemma}\label{lem:rho-inj}
$\rho$ is injective for all $\kappa$.
\begin{proof}
Define conformal gradings $\Delta$ on $V^\kappa\left(\mathfrak{gl}(1|1)\right)$ and $\mathbb{W}^\kappa$ by setting
\begin{align}\label{degreeWakimoto}
&\begin{aligned}
&\conf(|0\rangle)=\conf(c_{(-1)}|0\rangle)=0,\\
&\conf(x_{(-1)}|0\rangle)=\conf(b_{(-1)}|0\rangle)=\conf(\chi_{i (-1)}|0\rangle)=1,\quad
x \in \mathfrak{gl}(1|1),\quad
i=1,2
\end{aligned}
\end{align}
and $\conf(A_{(n)}B) = \conf(A)+\conf(B)-n-1$. We choose the set $\mathcal{A}$ of homogeneous strong generators to be $\{X_{(-1)}|0\rangle\}_{X\in \mathfrak{gl}(1|1)}$ for $V^\kappa(\mathfrak{gl}(1|1))$ and $\{b_{(-1)}|0\rangle$, $c_{(-1)}|0\rangle$, $\chi_{1(-1)}|0\rangle$, $\chi_{2(-1)}|0\rangle\}$ for $\mathbb{W}^\kappa$. Then the associated standard filtrations are \begin{align*}
F_nV:=\mathrm{Span}\left\{a_{(-n_1)}^{i_1}\cdots a_{(-n_r)}^{i_r}|0\rangle \mathrel{} \middle| \mathrel{} a^{i_j}\in \mathcal{A}, \sum_{j}\Delta (a^{i_j})\leq n,r\geq0,n_i\geq 0\right\}
\end{align*}
for $V=V^\kappa\left(\mathfrak{gl}(1|1)\right)$ and $\mathbb{W}^\kappa$ and their associated graded superspaces
\begin{align*}
\mathrm{gr}_F V := \bigoplus_{n = 0}^\infty \frac{F_n V}{F_{n-1} V}
\end{align*}
admit a structure of Poisson vertex superalgebra \cite{Ar1,L3}. Since $\rho$ preserves the gradings $\Delta$ by \eqref{map}, $\rho$ induces a homomorphism of Poisson vertex superalgebras
\begin{align*}
\mathrm{gr}_F \rho \colon \mathrm{gr}_F V^\kappa\left(\mathfrak{gl}(1|1)\right) \rightarrow \mathrm{gr}_F \mathbb{W}^\kappa.
\end{align*}
We have
\begin{align*}
&\mathrm{gr}_F V^\kappa\left(\mathfrak{gl}(1|1)\right) = \C[ \partial^n E_{i,j} \mid 1 \leq i, j \leq 2,\ n \in \Z_{\geq0}],\\
&\mathrm{gr}_F \mathbb{W}^\kappa = \C[ \partial^n b, \partial^n c, \partial^n \chi_i \mid i=1,2,\ n \in \Z_{\geq0}],
\end{align*}
where $\partial^n A$ is the image of $\frac{1}{n!}A_{(-n-1)}|0\rangle \in F_{\Delta(A)+n}V$ in $F_{\Delta(A)+n}V / F_{\conf(A)+n-1}V$. Next, define weight gradings $\wt$ on $\mathrm{gr}_F V^\kappa\left(\mathfrak{gl}(1|1)\right)$ and $\mathrm{gr}_F \mathbb{W}^\kappa$ by setting
\begin{align*}
&\wt(\partial^n E_{1,2}) = \wt(\partial^n b) = \wt(\partial^n c)=0,\\
&\wt(\partial^n E_{i,i}) = \wt(\partial^n E_{2,1}) = \wt(\partial^n \chi_i)=1,\quad
i=1,2
\end{align*}
and $\wt(A B) = \wt(A) + \wt(B)$. They yield filtrations  $G_n \overline{V} = \Span \{ A \in \overline{V} \mid \wt(A) \leq n\}$ on $\mathrm{gr}_F \overline{V}$ for $\overline{V}= \mathrm{gr}_FV^\kappa\left(\mathfrak{gl}(1|1)\right)$, $\mathrm{gr}_F \mathbb{W}^\kappa$. Since $\{ G_m \overline{V}_{\lambda} G_n \overline{V}\} \subset G_{m+n} \overline{V}[\lambda]$, the associated graded superspace
\begin{align*}
\mathrm{gr}_G \overline{V} := \bigoplus_{n = 0}^\infty \frac{G_n \overline{V}}{G_{n-1} \overline{V}}
\end{align*}
also has a structure of Poisson vertex superalgebra. 
Since $\mathrm{gr}_F\rho$ preserves the weight gradings by \eqref{map}, $\mathrm{gr}_F\rho$ induces a homomorphism of Poisson vertex superalgebras
\begin{align*}
\begin{split}
\mathrm{gr}_G \mathrm{gr}_F \rho \colon &\mathrm{gr}_G \mathrm{gr}_F V^\kappa\left(\mathfrak{gl}(1|1)\right) \rightarrow \mathrm{gr}_G \mathrm{gr}_F \mathbb{W}^\kappa,\\
& E_{1,2}\mapsto  b,\quad
 E_{2,1}\mapsto  c (\chi_1+\chi_2),\\
& E_{i,i}\mapsto  \chi_i,\quad
i=1,2,
\end{split}
\end{align*}
where $A$ denotes the image of $A \in G_{\wt(A)}\overline{V}$ in $G_{\wt(A)}\overline{V} / G_{\wt(A)-1}\overline{V}$ by abuse of notation. Since $\mathrm{gr}_G \mathrm{gr}_F \rho$ is injective, so is $\rho$.
\end{proof}
\end{lemma}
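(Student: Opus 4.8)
The plan is to reduce injectivity to a computation in commutative differential superalgebra by passing to an associated graded. First I would put on both $V^\kappa(\mathfrak{gl}(1|1))$ and $\mathbb{W}^\kappa$ the standard increasing filtration attached to a conformal grading $\Delta$ that assigns degree one to every strong generator, with the single exception of the symbol $c$, which I would place in degree zero; one checks from \eqref{map} that the free-field formulas for $\rho$ respect this grading. The associated graded of a vertex superalgebra for such a filtration is a Poisson vertex superalgebra which, as a differential commutative superalgebra, is freely generated by the symbols of the strong generators, so $\mathrm{gr}\,\rho$ becomes a morphism of free differential superalgebras. Since a filtered map with injective associated graded is itself injective, it suffices to prove that $\mathrm{gr}\,\rho$ is injective.

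The obstacle is that $\mathrm{gr}\,\rho$ does not carry generators to generators: while $E_{1,2}\mapsto b$ is clean, the symbols of $E_{1,1},E_{2,2}$ contain a product $:cb:$ alongside $\chi_i$, and the symbol of $E_{2,1}$ is $:c(\chi_1+\chi_2):+k_1\partial c$, again a genuine product. To isolate the leading contributions I would impose a second, auxiliary $\Z_{\geq0}$-grading $\wt$ on the (now commutative) associated graded that counts Heisenberg letters, namely $\wt(b)=\wt(c)=0$ and $\wt(\chi_i)=1$, extended so as to be compatible with products and with $\partial$. Forming the associated graded a second time discards the lower-weight tails and leaves the much simpler morphism $E_{1,2}\mapsto b$, $E_{i,i}\mapsto\chi_i$, $E_{2,1}\mapsto c(\chi_1+\chi_2)$.

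It then remains to verify injectivity of this doubly-graded morphism of differential polynomial superalgebras. On the even part it restricts to the isomorphism $\C[\partial^n E_{1,1},\partial^n E_{2,2}]\xrightarrow{\sim}\C[\partial^n\chi_1,\partial^n\chi_2]$, and $b$ is hit freely; the only remaining odd generator $E_{2,1}$ maps to $c(\chi_1+\chi_2)$, from which $c$ and all its derivatives can be recovered after inverting the even element $\chi_1+\chi_2$, by an invertible triangular relation. Concretely, after this localization the morphism admits the two-sided inverse $b\mapsto E_{1,2}$, $\chi_i\mapsto E_{i,i}$, $c\mapsto E_{2,1}/(E_{1,1}+E_{2,2})$, hence is an isomorphism; since $E_{1,1}+E_{2,2}$ is a non-zero-divisor the source embeds into its localization, and injectivity follows. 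I would stress that this leading morphism is manifestly independent of $\kappa$, the structure constants entering only the discarded tails, so the argument delivers injectivity of $\rho$ uniformly in $\kappa$, including degenerate levels. The one genuinely delicate point is the bookkeeping of the two gradings so that the passage to a commutative algebra and the separation of the product symbols are both clean; after that the independence check is routine.
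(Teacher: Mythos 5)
Your proposal is correct and follows essentially the same route as the paper: the same two-step passage to associated graded algebras, first via the conformal filtration with $c$ in degree zero and then via the auxiliary weight grading with $\wt(b)=\wt(c)=0$, $\wt(\chi_i)=1$, reducing to the morphism $E_{1,2}\mapsto b$, $E_{i,i}\mapsto\chi_i$, $E_{2,1}\mapsto c(\chi_1+\chi_2)$ of free differential polynomial superalgebras. The only difference is that you make explicit, via localization at the non-zero-divisor $\chi_1+\chi_2$, the final injectivity check that the paper states without further comment.
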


By using $\rho$ in Proposition \ref{prop:Wakimoto-gl(1|1)}, the $\mathbb{W}^\kappa$-module
\begin{align*}
\mathbb{W}^\kappa_\mu := M_{\mathfrak{gl}(1|1)} \otimes \pi^{\kappa - \kappa_2}_\mu,\quad
\mu \in \mathfrak{h}^*
\end{align*}
becomes a $V^\kappa\left(\mathfrak{gl}(1|1)\right)$-module, which we call the Wakimoto representation of $\widehat{\mathfrak{gl}}(1|1)_\kappa$ with highest weight $\mu$ (cf. \cite{F05}).
\subsection{Wakimoto representations at generic levels}
Here we study the Wakimoto representations $\mathbb{W}^\kappa_\mu$ for generic level $\kappa$. Set 
\begin{align*}
\mu_i:=\mu(E_{i,i}),\quad i=1,2.
\end{align*}
Consider the highest (resp.\ lowest) Verma module of $\mathfrak{gl}(1|1)$
\begin{align*}
V_{n,e}^\pm:=U\left(\mathfrak{gl}(1|1)\right)\otimes_{U(\mathfrak{b}_\pm)}\C v_{n,e},\quad n,e\in\C,
\end{align*}
where $\mathfrak{b}_+:=\mathrm{Span}\{E_{1,2}, N,E\}$, (resp.\ $\mathfrak{b}_-:=\mathrm{Span}\{E_{2,1}, N,E\}$), with 
\begin{align*}
N:=\frac{1}{2}(E_{1,1}-E_{2,2}),\quad E:=E_{1,1}+E_{2,2}
\end{align*}
and $\C v_{n,e}$ is the one dimensional $\mathfrak{b}_\pm$-module such that $E_{1,2}$ (resp.\ $E_{2,1}$) acts by 0, $N$ by $n$, and $E$ by $e$. If $e\neq 0$, then they are irreducible and we have an isomorphism $V^+_{n,e}\simeq V^-_{n-1,e}$. If $e=0$, then they are only indecomposable and we have the following short exact sequence
\begin{align}\label{eq: Jordan-Holder of Verma}
0\rightarrow A_{n\pm1}\rightarrow V_{n,0}^\pm\rightarrow A_n\rightarrow 0,
\end{align}
where $A_q=\C w_q$ ($q\in\C$) is the one dimensional $\mathfrak{gl}(1|1)$-module such that $E_{1,2}$, $E_{2,1}$, $E$ acts by 0, and $N$ acts by $q$.
\smallskip

Define the $V^\kappa(\mathfrak{gl}(1|1))$-modules
\begin{align*}
\hat{V}^{\pm,\kappa}_{n,e}:=U(\widehat{\mathfrak{gl}}(1|1)_\kappa)\otimes_{U(\mathfrak{gl}(1|1)_{\kappa,+})}V^\pm_{n,e},\quad 
\hat{A}_n^\kappa:=U(\widehat{\mathfrak{gl}}(1|1)_\kappa)\otimes_{U(\mathfrak{gl}(1|1)_{\kappa,+})} A_n.
\end{align*}
\begin{lemma}[\cite{CR2}]\label{Induced modules at generic level}
(1) If $e\neq 0$, then $\hat{V}^{\pm,\kappa}_{n,e}$ is irreducible for 
$\frac{e}{k_1}\notin \mathbb{Q}$, and there exists an isomorpshim 
$$\hat{V}^{+,\kappa}_{n,e}\simeq \hat{V}^{-,\kappa}_{n-1,e}.$$
(2) If $e=0$, then $\hat{V}^{\pm,\kappa}_{n,e}$ admits the following short exact sequence for $k_1\neq 0$
\begin{align}\label{eq: Jordan-Holder of affine Verma}
0\rightarrow \hat{A}^\kappa_{n\pm1}\rightarrow \hat{V}_{n,0}^{\pm,\kappa}\rightarrow \hat{A}_n^\kappa\rightarrow 0.
\end{align}
(3) $\hat{A}_n^\kappa$ is irreducible for $k_1\neq 0$.
\end{lemma}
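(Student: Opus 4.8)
The plan is to prove Lemma~\ref{Induced modules at generic level} by reducing all three statements about the induced affine modules $\hat{V}^{\pm,\kappa}_{n,e}$ and $\hat{A}_n^\kappa$ to the structure of the finite-dimensional $\mathfrak{gl}(1|1)$-modules $V^\pm_{n,e}$ and $A_n$ recorded in \eqref{eq: Jordan-Holder of Verma}, using the exactness of the induction functor $U(\widehat{\mathfrak{gl}}(1|1)_\kappa)\otimes_{U(\mathfrak{gl}(1|1)_{\kappa,+})}(-)$ together with a careful analysis of the contragredient (Shapovalov-type) pairing on the induced modules. First I would fix the Hamiltonian grading coming from the $L_0$-eigenvalue (conformal weight) and observe that each graded piece is finite-dimensional, so that irreducibility can be tested degree by degree. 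For part~(2), the short exact sequence \eqref{eq: Jordan-Holder of affine Verma} is immediate: applying the exact induction functor to \eqref{eq: Jordan-Holder of Verma} with $e=0$ yields the asserted sequence, with $\hat{A}^\kappa_{n\pm1}$ and $\hat{A}^\kappa_n$ the induced modules of the one-dimensional pieces $A_{n\pm1}$, $A_n$, and the condition $k_1\neq0$ is exactly what guarantees that the central element $E$ acts non-trivially enough that these induced modules are genuinely distinct rather than collapsing.

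For the isomorphism $\hat{V}^{+,\kappa}_{n,e}\simeq\hat{V}^{-,\kappa}_{n-1,e}$ in part~(1), I would promote the finite isomorphism $V^+_{n,e}\simeq V^-_{n-1,e}$ (valid for $e\neq0$) to the affine level. The top spaces agree as $\mathfrak{gl}(1|1)$-modules, so one constructs the map by sending the highest-weight generator of $\hat{V}^{+,\kappa}_{n,e}$ to the image of $v_{n,e}$ under the finite isomorphism, viewed inside the generalized lowest-weight space of $\hat{V}^{-,\kappa}_{n-1,e}$, and extends by the affine action; surjectivity and injectivity then follow from comparing graded characters, which coincide because both modules are freely generated over the same negative modes $\mathfrak{gl}(1|1)[t^{-1}]t^{-1}$ from isomorphic top spaces.

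The genuinely substantive claim is the irreducibility in part~(1) under the genericity hypothesis $\tfrac{e}{k_1}\notin\mathbb{Q}$, and this is where I expect the main obstacle to lie. The standard approach is via the determinant of the Shapovalov form: $\hat{V}^{\pm,\kappa}_{n,e}$ is irreducible precisely when the contravariant form on each graded component is non-degenerate, and degeneracies correspond to the existence of singular vectors at positive conformal weight. The eigenvalue of the Casimir-type central charge $E$, which acts by $e$, enters the conformal weight formula through the Sugawara construction: a putative singular vector at depth $m$ forces a resonance condition that, because the relevant quadratic Casimir of $\mathfrak{gl}(1|1)$ is built from $E$ and the level via $k_1$, can only be satisfied when $\tfrac{e}{k_1}$ is rational. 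The delicate point is that $\mathfrak{gl}(1|1)$ is non-reductive with a degenerate Killing form, so one cannot simply quote a Kac--Kazhdan determinant formula; instead I would exploit the $bc$-system Wakimoto realization of Proposition~\ref{prop:Wakimoto-gl(1|1)} and compute the action of $E_{2,1}$, $E_{1,2}$ on a general weight vector explicitly, showing that no proper submodule is stable once $\tfrac{e}{k_1}\notin\mathbb{Q}$. Since the statement is attributed to \cite{CR2}, the cleanest route is to cite that reference for the irreducibility and reserve the self-contained arguments for the elementary parts~(2) and~(3), where part~(3) follows because $\hat{A}_n^\kappa$ has the structure of a Fock-type module on which, for $k_1\neq0$, the fermionic generators act without creating singular vectors.
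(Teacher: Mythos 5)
There is a genuine gap: the two irreducibility claims, which are the substance of the lemma, are not actually proved in your proposal. For part (1) you correctly identify that a singular vector forces a resonance condition involving $e/k_1$, but you then treat this as a ``delicate point'' requiring either an explicit Wakimoto computation or an appeal to \cite{CR2}. Citing the reference is not a proof, and the Wakimoto route is circular in the logical structure of this paper: the identification of $\mathbb{W}^\kappa_\mu$ with $\hat{V}^{-,\kappa}_{n(\mu),e(\mu)}$ (Proposition \ref{generic gl11-Wakimoto}) is itself deduced \emph{from} this lemma, so you cannot use properties of the Wakimoto module to establish irreducibility of the Verma module here. The point you were missing is that no Shapovalov determinant or Kac--Kazhdan formula is needed: since $E$ is central, any singular vector in $\hat{V}^{-,\kappa}_{n,e}$ generates a highest-weight submodule receiving a nonzero map from some $\hat{V}^{-,\kappa}_{n',e}$ with $n'-n\in\Z$, and comparing Sugawara conformal weights gives $\Delta_{n',e}-\Delta_{n,e}=\tfrac{e(n'-n)}{k_1}\in\Z_{>0}$, which is impossible when $\tfrac{e}{k_1}\notin\mathbb{Q}$. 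That one-line computation is the entire content of (1). Part (3) is worse off in your write-up: the assertion that ``the fermionic generators act without creating singular vectors'' on a ``Fock-type module'' is a restatement of the claim, not an argument. The paper's proof is again a conformal-weight count: the generator of $\hat{A}^\kappa_n$ has conformal weight $0$, a singular vector at depth $j>0$ would have weight $j$, but it would have to be the image of the generator of some $\hat{V}^{\pm,\kappa}_{n',0}$, whose conformal weight is $0$ --- contradiction. Note that part (3) is not decorative: in the paper it supplies the left-exactness in (2).

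On the positive side, your treatment of (2) via exactness of the induction functor (which holds by PBW, since $U(\widehat{\mathfrak{gl}}(1|1)_\kappa)$ is free over $U(\mathfrak{gl}(1|1)_{\kappa,+})$) is correct and in fact cleaner than the paper's route, which only invokes right-exactness and then deduces injectivity of $\hat{A}^\kappa_{n\pm1}\to\hat{V}^{\pm,\kappa}_{n,0}$ from (3). Your explanation of why $k_1\neq0$ enters there is off, however: with exact induction the sequence holds for all $\kappa$; the hypothesis $k_1\neq 0$ is needed only because the Segal--Sugawara vector \eqref{eq: conformal vector}, on which all the irreducibility arguments rest, requires it. The isomorphism $\hat{V}^{+,\kappa}_{n,e}\simeq\hat{V}^{-,\kappa}_{n-1,e}$ is handled as in the paper, by inducing the finite-dimensional isomorphism.
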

\proof
We include the proof for the completeness of the paper, following the argument in \cite[Section 3.2]{CR2}. We may assume $k_1\neq 0$.
For (1), the isomorphism $V^+_{n,e}\simeq V^-_{n-1,e}$ induces an isomorphism $\hat{V}^{+,\kappa}_{n,e}\simeq \hat{V}^{-,\kappa}_{n-1,e}$. The conformal dimension of $1\otimes v_{n,e}\in \hat{V}^{\pm,\kappa}_{n,e}$ with respect to \eqref{eq: conformal vector} is 
\begin{align*}
\Delta_{n,e}:=\frac{1}{2k_1}\left(\frac{1-k_2}{k_1}e^2+2en\mp1\right).\end{align*}
Suppose that $\hat{V}^{-,\kappa}_{n,e}$ is reducible. Then we have a nontrivial $V^\kappa(\mathfrak{gl}(1|1))$-module homomorphism 
$$\hat{V}^{-,\kappa}_{n',e}\rightarrow \hat{V}^{-,\kappa}_{n,e},\quad \exists n'\in n+\mathbb{Z},$$
and thus 
$\Delta_{n',e}=\Delta_{n,e}+j$ for some $j\in \mathbb{Z}_{>0}$, 
that is, $\frac{e}{k_1}(n'-n)=j$. Since $n'-n\in \mathbb{Z}$, it is impossible when $\frac{e}{k_1}\notin \mathbb{Q}$, in which case $\hat{V}^{-,\kappa}_{n,e}$ is irreducible. This completes the proof of (1).
For (3), note that the conformal dimension of $1\otimes w_n\in \hat{A}_n^\kappa$ with respect to \eqref{eq: conformal vector} is always 0. Suppose that $\hat{A}^\kappa_n$ is reducible. Then we have a nontrivial $V^\kappa(\mathfrak{gl}(1|1))$-module homomorphism  
$$\hat{V}^{+,\kappa}_{n',0}\rightarrow \hat{A}^\kappa_n\quad \mathrm{or}\quad 
\hat{V}^{-,\kappa}_{n',0}\rightarrow \hat{A}^\kappa_n,\quad \exists n'\in \mathbb{Z}$$
such that the image of $1\otimes v_{n,0}$ is not contained in $\C w_n$. It is impossible since the conformal dimension of $1\otimes v_{n,0}\in \hat{V}^{\pm,\kappa}_{n,0}$ is 0. Thus $\hat{A}^\kappa_n$ is irreducible for $k_1\neq0$. For (2), \eqref{eq: Jordan-Holder of Verma} induces an exact sequence of  $V^\kappa(\mathfrak{gl}(1|1))$-modules
$$\hat{A}^\kappa_{n\pm1}\rightarrow \hat{V}^{\pm,\kappa}_{n,0}\rightarrow \hat{A}^\kappa_{n}\rightarrow 0$$
by the right exactness of the tensor functor $U(\widehat{\mathfrak{gl}}(1|1)_\kappa)\otimes_{U(\mathfrak{gl}(1|1)_{\kappa,+})}(?)$. The map $\hat{A}^\kappa_{n\pm1}\rightarrow \hat{V}^{\pm,\kappa}_{n,0}$ is also injective by (3) for $k_1\neq 0$. This completes the proof of (2).
\endproof

\begin{proposition}\label{generic gl11-Wakimoto}
We have an isomorphism 
\begin{align*}
\mathbb{W}_{\mu}^\kappa\simeq \hat{V}^{-,\kappa}_{n(\mu),e(\mu)}
\end{align*}
of $V^\kappa(\mathfrak{gl}(1|1))$-modules where $n(\mu)=\frac{\mu_1+\mu_2}{2}-1$, $e(\mu)=\mu_1-\mu_2$ for $\frac{e(\mu)}{k_1}\not\in \mathbb{Q}$ if $e(\mu)\neq0$ and for $k_1\neq0$ if $e(\mu)=0$.
\end{proposition}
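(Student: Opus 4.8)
The plan is to produce an explicit lowest-weight vector of $\mathbb{W}^\kappa_\mu$, use the universal property of the induced module to obtain a nonzero map $\hat{V}^{-,\kappa}_{n(\mu),e(\mu)}\to\mathbb{W}^\kappa_\mu$, and then promote it to an isomorphism by matching graded characters together with the irreducibility statements of Lemma \ref{Induced modules at generic level}.

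First I would single out $v_0:=c_{(-1)}|0\rangle\otimes|\mu\rangle\in\mathbb{W}^\kappa_\mu$ and compute the action of the current modes on it directly from the formulas \eqref{map} of Proposition \ref{prop:Wakimoto-gl(1|1)} and the (anti)commutation relations of the $bc$-system and the Heisenberg algebra. The relevant facts are: $E_{2,1(0)}v_0=0$, so that $v_0$ meets the lowest-weight condition (here one uses $c_{(-1)}^2=0$, $(\partial c)_{(0)}=0$, and that the positive Heisenberg modes annihilate $|\mu\rangle$); $x_{(m)}v_0=0$ for all $x\in\mathfrak{gl}(1|1)$ and $m\geq 1$; and $E_{1,2(0)}v_0=b_{(0)}c_{(-1)}|0\rangle\otimes|\mu\rangle=|0\rangle\otimes|\mu\rangle$, which realizes the second basis vector of the two-dimensional finite module. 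Combined with $\chi_{i(0)}|\mu\rangle=\mu_i|\mu\rangle$ and the identity $:cb:_{(0)}v_0=v_0$ (responsible for the shift by $-1$ in $n(\mu)$), this shows that $v_0$ is a simultaneous eigenvector of $N_{(0)}$ and $E_{(0)}$ with exactly the eigenvalues $n(\mu)$ and $e(\mu)$ of the statement. The care here is purely bookkeeping of fermionic signs and of the weight-zero mode $c_{(-1)}$, which is precisely why $v_0$, rather than $|0\rangle\otimes|\mu\rangle$, is the lowest-weight vector.

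These relations say that $v_0$ and $E_{1,2(0)}v_0$ span a copy of $V^-_{n(\mu),e(\mu)}$ inside $\mathbb{W}^\kappa_\mu$ on which $\mathfrak{gl}(1|1)\otimes t\C[t]$ acts by zero and $K$ by the level, so the universal property of $\hat{V}^{-,\kappa}_{n(\mu),e(\mu)}$ yields a nonzero homomorphism $\Phi\colon\hat{V}^{-,\kappa}_{n(\mu),e(\mu)}\to\mathbb{W}^\kappa_\mu$ sending $1\otimes v_{n(\mu),e(\mu)}\mapsto v_0$. To finish I would match conformal-weight-graded characters: the $bc$-system contributes $2\prod_{m\geq 1}(1+q^m)^2$ (the factor $2$ coming from the weight-zero mode $c_{(-1)}$) and the rank-two Heisenberg algebra contributes $\prod_{m\geq 1}(1-q^m)^{-2}$, so that up to the overall power of $q$ fixed by the ground-state conformal weight both modules have character
\begin{align*}
2\prod_{m\geq 1}\frac{(1+q^m)^2}{(1-q^m)^2},
\end{align*}
the right-hand side being, by PBW, the character of $\hat{V}^{-,\kappa}_{n(\mu),e(\mu)}$ since its base $V^-_{n(\mu),e(\mu)}$ is two-dimensional in all cases. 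Each graded piece therefore has the same finite dimension, and it remains only to prove that $\Phi$ is injective.

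Injectivity I would treat according to the two regimes of Lemma \ref{Induced modules at generic level}. If $e(\mu)\neq 0$ and $e(\mu)/k_1\notin\mathbb{Q}$, the source is irreducible, so the nonzero $\Phi$ is injective and the character identity promotes it to an isomorphism. The case I expect to require the most care is $e(\mu)=0$ with $k_1\neq 0$, where $\hat{V}^{-,\kappa}_{n(\mu),0}$ is indecomposable of length two, with irreducible socle $\hat{A}^\kappa_{n(\mu)\pm1}$ by Lemma \ref{Induced modules at generic level}(2)--(3) and the sequence \eqref{eq: Jordan-Holder of affine Verma}. Since $\Phi$ carries the socle generator $E_{1,2(0)}(1\otimes v_{n(\mu),0})$ to $|0\rangle\otimes|\mu\rangle\neq 0$, it is injective on the socle; as this is the unique minimal submodule, $\ker\Phi$ meets it trivially and hence must vanish, and the character identity again gives an isomorphism. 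One could alternatively avoid the case distinction through an associated-graded argument in the spirit of Lemma \ref{lem:rho-inj}, filtering both sides and checking injectivity of the induced map on symbols, valid for every level with $k_1\neq 0$.
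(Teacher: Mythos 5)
Your proposal is correct and follows essentially the same route as the paper's own proof: identify the conformal-degree-zero subspace $\C|\mu\rangle\oplus\C c_{(-1)}|\mu\rangle$ with $V^-_{n(\mu),e(\mu)}$, induce a map $\hat{V}^{-,\kappa}_{n(\mu),e(\mu)}\to\mathbb{W}^\kappa_\mu$ by universality, deduce injectivity from Lemma \ref{Induced modules at generic level} (with your socle argument for $e(\mu)=0$ being the same observation as the paper's remark that the singular vectors lie in $A_{n\pm1}\subset\hat{A}^\kappa_{n\pm1}$, which is visibly embedded), and conclude surjectivity from the equality of graded characters $2\prod_{m\geq1}(1+q^m)^2(1-q^m)^{-2}$.
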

\proof
Define a conformal grading $\Delta$ on $\mathbb{W}^\kappa_{\mu}$ by $\Delta(|\mu\rangle)=0$ and \eqref{degreeWakimoto}:
\begin{align}\label{eq: conformal grading for Wakimoto representations}
\mathbb{W}^\kappa_{\mu}=\bigoplus_{d\geq0}\mathbb{W}^\kappa_{\mu,d}.
\end{align}
Then the subspace 
$\mathbb{W}^\kappa_{\mu,0}=\C|\mu\rangle \oplus \C c_{(-1)}|\mu\rangle$
is a $\hat{\mathfrak{gl}}(1|1)_{\kappa,+}$-module, which is isomorphic to $V^-_{n(\mu),e(\mu)}$ by
\begin{align*}
&V^-_{n(\mu),e(\mu)}\rightarrow \mathbb{W}^{\kappa}_{\mu,0},\\
&v_{n(\mu),e(\mu)}\mapsto c_{(-1)}|\mu\rangle,\ E_{1,2}v_{n(\mu),e(\mu)}\mapsto |\mu\rangle.
\end{align*}
Thus, by the universality of the induced modules, we have a $V^\kappa(\mathfrak{gl}(1|1))$-module homomorphism 
\begin{align}\label{eq: affine Verma to Wakimoto}
\hat{V}^{-,\kappa}_{n(\mu),e(\mu)}\rightarrow \mathbb{W}_{-n\alpha}^\kappa.
\end{align}
If $\mu_1\neq \mu_2$, then the map \eqref{eq: affine Verma to Wakimoto} is injective by Lemma \ref{Induced modules at generic level} (1). If $\mu_1=\mu_2$, then $(n(\mu),e(\mu))=(-n-1,0)$. It follows from \eqref{eq: Jordan-Holder of affine Verma} that the singular vectors of $\hat{V}^{-,\kappa}_{-n-1,0}$ for $k_1\neq0$ belong to the subspace 
$A_{-n}\subset \hat{A}^\kappa_{-n}$,
which is clearly embedded by \eqref{eq: affine Verma to Wakimoto}.
Thus the map \eqref{eq: affine Verma to Wakimoto} is injective by Lemma \ref{Induced modules at generic level} (2) also in this case. Therefore, the map \eqref{eq: affine Verma to Wakimoto} is injective for generic $\kappa$ for any highest weight $\mu\in\mathfrak{h}^*$.  

Define a conformal degree $\Delta$ of $\hat{V}^{-,\kappa}_{n(\mu),e(\mu)}$ by $\Delta(V^-_{n(\mu),e(\mu)})=0$, $\Delta(X_{(-1)})=1$, ($X\in \mathfrak{gl}(1|1)$), and $\Delta(X_{(-n-1)}Y)=\Delta(X)+\Delta(Y)+n+1$. Then the map \eqref{eq: affine Verma to Wakimoto} preserves the conformal gradings by Proposition \ref{prop:Wakimoto-gl(1|1)}.
Now its surjectivity follows from the equality of the graded characters:
\begin{align*}
\mathrm{ch}\left[\widehat V^-_{n(\mu),e(\mu)}\right] = 2 \prod_{n=1}^\infty (1+q^n)^2(1+q^n)^{-2}=\mathrm{ch}[\mathbb{W}^\kappa_{\mu}].
\end{align*}
Thus \eqref{eq: affine Verma to Wakimoto} is an isomorphism. 
\endproof
\subsection{Resolution}
For 
$\alpha = \chi_1+\chi_2 \in \mathfrak{h}^*$,
define an intertwining operator $S(z): \mathbb{W}^\kappa_{-n\alpha}\xrightarrow{S}\mathbb{W}^\kappa_{-(n+1)\alpha}((z))$, ($k_1\neq0$), by
\begin{align*}
S(z) =\  :b(z) \mathrm{e}^{-\frac{1}{k_1}\int\alpha(z)}:,
\end{align*}
where 
\begin{equation}\label{latticeoperator}
:\mathrm{e}^{-\frac{1}{k_1}\int \alpha(z)}:\ =
T_{-\alpha}\mathrm{exp}\left(\frac{1}{k_1}\sum_{n<0}\frac{\alpha_{(n)}}{n}z^{-n}\right)
\mathrm{exp}\left(\frac{1}{k_1}\sum_{n>0}\frac{\alpha_{(n)}}{n}z^{-n}\right),
\end{equation}
and $T_{-\alpha}$ is the shift operator $\pi^\kappa_{-n\alpha}\rightarrow \pi^\kappa_{-(n+1)\alpha}$ sending the highest weight vector to the highest weight vector and commuting with all $\chi_{i(n)}$, $i=1,2$, $n\neq0$. By direct calculation, one can show that the residue 
$$S:=\int S(z)dz$$
satisfies $S(u_{(-1)}|0\rangle)=0$, $u\in\mathfrak{gl}(1|1)$. It follows that $S$ is a $V^\kappa(\mathfrak{gl}(1|1))$-module homomorphism from $\mathbb{W}^\kappa_{-n\alpha}$ to $\mathbb{W}^\kappa_{-(n+1)\alpha}$ (cf.\ \cite{F05}). 

Using Wakimoto representations, we can extend the injective homomorphism $\rho$ to a long exact sequence as in the following proposition.
\begin{proposition}\label{longexactsequence} The sequence
\begin{equation}\label{resolution}
0\rightarrow V^\kappa(\mathfrak{gl}(1|1))\xrightarrow{\rho} \mathbb{W}^\kappa_0\xrightarrow{S} \mathbb{W}^\kappa_{-\alpha}\rightarrow\cdots \rightarrow\mathbb{W}^\kappa_{-n\alpha}\xrightarrow{S}\mathbb{W}^\kappa_{-(n+1)\alpha}\rightarrow\cdots.
\end{equation}
is a complex of $V^\kappa(\mathfrak{gl}(1|1))$-modules and exact for  $k_1\neq 0$.
\end{proposition}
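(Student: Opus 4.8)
The plan is to prove the statement in two stages: first that \eqref{resolution} is a complex, and then that it is exact, in both cases reducing everything to the module structure of the Wakimoto modules furnished by Proposition \ref{generic gl11-Wakimoto} together with Lemma \ref{Induced modules at generic level}. For the complex property, I would first note that $S\circ\rho=0$ holds because $S$ is a $V^\kappa(\mathfrak{gl}(1|1))$-module homomorphism and $\rho(V^\kappa(\mathfrak{gl}(1|1)))$ is the submodule generated by the image of the vacuum; a short residue computation gives $S(|0\rangle)=0$, since on $|0\rangle$ both $b(z)$ and $:\mathrm e^{-\frac1{k_1}\int\alpha(z)}:$ produce only non-negative powers of $z$ (the lattice factor contributes $z^{0}$ because $\langle\alpha,\alpha\rangle=0$), so that $S(\rho(a))=a\cdot S(|0\rangle)=0$ for all $a$. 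For $S\circ S=0$ the key point is again that $\alpha$ is isotropic for the form $\kappa-\kappa_2$, i.e.\ $\alpha(z)\alpha(w)\sim 0$: the two screening currents then have nonsingular mutual OPE with trivial monodromy, while $b(z)b(w)\sim 0$ makes the integrand antisymmetric, so the double residue defining $S^2$ vanishes. (Alternatively, $S^2=0$ will follow a posteriori from the kernel/image identification below.)

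Next I would pin down the terms and maps. By Proposition \ref{generic gl11-Wakimoto}, for $k_1\neq0$ one has $\mathbb{W}^\kappa_{-n\alpha}\simeq\hat V^{-,\kappa}_{-n-1,0}$, the isotropy of $\alpha$ forcing the eigenvalue $e=0$ uniformly in $n$. Lemma \ref{Induced modules at generic level}(2)--(3) then supplies, for each $n\geq0$, a short exact sequence
\[
0\to \hat A^\kappa_{p_n}\to \mathbb{W}^\kappa_{-n\alpha}\to \hat A^\kappa_{q_n}\to 0
\]
with irreducible sub and quotient whose $N$-labels are consecutive integers and satisfy $q_n=p_{n+1}$; in other words the quotient of the $n$-th term is isomorphic to the submodule of the $(n+1)$-th term, and the submodule of $\mathbb{W}^\kappa_0$ is $\hat A^\kappa_0\simeq V^\kappa(\mathfrak{gl}(1|1))$ itself. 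This matching is the combinatorial backbone of the resolution.

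I would then analyze $S$ on the level of these composition factors. First, $S$ annihilates the submodule $\hat A^\kappa_{p_n}$: its image would be a quotient of the irreducible $\hat A^\kappa_{p_n}$ lying inside $\mathbb{W}^\kappa_{-(n+1)\alpha}$, whose only composition factors are $\hat A^\kappa_{p_{n+1}}$ and $\hat A^\kappa_{q_{n+1}}$, both carrying labels different from $p_n$, so by irreducibility the image is zero. Second, $S\neq0$: a direct residue computation on the cyclic generator $c_{(-1)}|{-n\alpha}\rangle$ of $\hat V^{-,\kappa}_{-n-1,0}$ gives $S\bigl(c_{(-1)}|{-n\alpha}\rangle\bigr)=|{-(n+1)\alpha}\rangle$, using $b(z)c_{(-1)}|0\rangle=z^{-1}|0\rangle+\cdots$ and $\mathrm e^{-\frac1{k_1}\int\alpha(z)}|{-n\alpha}\rangle=|{-(n+1)\alpha}\rangle+O(z)$. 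Since $c_{(-1)}|{-n\alpha}\rangle$ represents the cyclic generator of the quotient and $|{-(n+1)\alpha}\rangle$ generates the submodule of the target, it follows that $\Ker S_n=\hat A^\kappa_{p_n}$ and that $S_n$ induces an isomorphism of the quotient $\hat A^\kappa_{q_n}$ onto the submodule $\hat A^\kappa_{p_{n+1}}=\hat A^\kappa_{q_n}$ of $\mathbb{W}^\kappa_{-(n+1)\alpha}$.

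Exactness is then immediate from the staircase: $\Ker S_n=\hat A^\kappa_{p_n}=\operatorname{im}S_{n-1}$ for $n\geq1$, while $\Ker S_0=\hat A^\kappa_{p_0}=\operatorname{im}\rho$ and $\rho$ is injective by Lemma \ref{lem:rho-inj}; as a consistency check one sees that the alternating sum of the classes $[\mathbb{W}^\kappa_{-n\alpha}]=[\hat A^\kappa_{p_n}]+[\hat A^\kappa_{q_n}]$ telescopes to $[V^\kappa(\mathfrak{gl}(1|1))]$. The main obstacle is the nonvanishing of each $S_n$: everything else is soft, resting only on irreducibility and label bookkeeping, but $S_n\neq0$ requires the explicit residue computation above, and this is exactly the step where the isotropy $\langle\alpha,\alpha\rangle=0$ is indispensable, as it is what makes the lattice part of the screening current contribute a single simple pole against $b(z)c_{(-1)}|0\rangle$ and lands the image on the highest-weight vector of the next Fock module.
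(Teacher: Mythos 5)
Your proof is correct and follows essentially the same route as the paper: both rest on the identification $\mathbb{W}^\kappa_{-n\alpha}\simeq\hat V^{-,\kappa}_{-n-1,0}$, the two-step composition series by the irreducibles $\hat A^\kappa_m$ from Lemma \ref{Induced modules at generic level}, and the explicit evaluations $S(|-n\alpha\rangle)=0$ and $S(c_{(-1)}|-n\alpha\rangle)=|-(n+1)\alpha\rangle$. The only cosmetic differences are that the paper deduces $S\circ S=0$ from the $\lambda$-bracket Jacobi identity on the abelian extension $\bigoplus_{n\geq0}\mathbb{W}^\kappa_{-n\alpha}$ rather than from antisymmetry of the double residue, and organizes the exactness argument by first checking the conformal-degree-zero subcomplex of finite-dimensional $\mathfrak{gl}(1|1)$-modules and then lifting via the irreducibility of the $\hat A^\kappa_{-n}$.
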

\begin{proof}
To show that  \eqref{resolution} is a complex, we have to show (1) $\mathrm{Im}(\rho)\subset\mathrm{Ker}(S)$ and (2) $S\circ S=0$. 
(1) follows from $S(x_{(-1)}|0\rangle)=0$, $x\in\mathfrak{gl}(1|1)$. To show (2), notice that we can extend the vertex superalgebra $\mathbb{W}^\kappa$ to  the direct sum of $\mathbb{W}^\kappa$-modules $\bigoplus_{n\geq0}\mathbb{W}^\kappa_{-n\alpha}$ by setting 
$$Y(|-n\alpha\rangle,z)=:\mathrm{e}^{-\frac{n}{k_1}\int \alpha(z)}:,$$
where $|-n\alpha\rangle$ is a fixed non-zero highest weight vector of $\pi^\kappa_{n\alpha}$ (see \eqref{latticeoperator}). This is due to $\kappa(n\alpha,m\alpha)=0$ for $n,m\geq0$ (cf. \cite[Chapter 5]{FBZ}). Then $S$ is the $0$-th mode $Q_{(0)}$ of the field corresponding to $Q=b_{(-1)}|-\alpha\rangle$. By the Jacobi identity of the $\lambda$-bracket, we have
\begin{eqnarray*}
S\circ S(a)
=[Q\hspace{0mm}_\lambda[Q\hspace{0mm}_\mu a]]_{\lambda=\mu=0}
=\frac{1}{2}[[Q\hspace{0mm}_\lambda Q]_{\lambda+\mu} a]]_{\lambda=\mu=0}
=0, \quad a\in\mathbb{W}^\kappa_{-n\alpha}.
\end{eqnarray*}
Thus $S\circ S=0$ follows.

It is clear that the complex \eqref{resolution} preserves the conformal gradings \eqref{eq: conformal grading for Wakimoto representations}. In particular, the subcomplex of conformal grading 0 is isomorphic to 
\begin{align*}
0\rightarrow A_0\rightarrow V^-_{-1,0}\rightarrow V^-_{-2,0}\rightarrow\cdots \rightarrow V^-_{-n,0} \rightarrow V^-_{-n-1,0}\rightarrow \cdots,
\end{align*}
which is a complex of $\mathfrak{gl}(1|1)$-modules.
Since $S(|-n\alpha\rangle)=0$ and $S(c_{(-1)}|-n\alpha\rangle)=|-(n+1)\alpha\rangle$, it is a successive composition of \eqref{eq: Jordan-Holder of Verma}, and thus exact. Now it follows that the complex \eqref{resolution} for $k_1\neq 0$ is a successive composition of \eqref{eq: Jordan-Holder of affine Verma} since $\hat{A}^\kappa_{-n}$, ($n\geq0$), are irreducible for $k_1\neq0$. Thus \eqref{resolution} is exact.
\end{proof}

\section{Free field realizations of $\mathcal{W}$-algebras}\label{sec:Wfree}

\subsection{$\mathcal{W}$-algebras}\label{sec:W-alg}
Let $\mathfrak{g}$ be a finite dimensional simple Lie superalgebra with a non-degenerate even supersymmetric invariant bilinear form $(\cdot|\cdot)$ such that $(\theta|\theta) = 2$ for a long root $\theta$ of $\mathfrak{g}$. Let $f$ be a nilpotent element in the even part of $\mathfrak{g}$ and fix a $\frac{1}{2}\Z$-grading $\Gamma: \mathfrak{g} = \bigoplus_{j \in \frac{1}{2}\Z} \mathfrak{g}_j$ on $\mathfrak{g}$, called a good grading, which satisfies $[\mathfrak{g}_i, \mathfrak{g}_j] \subset \mathfrak{g}_{i+j}$, $f \in \mathfrak{g}_{-1}$, and that $\operatorname{ad}_f \colon \mathfrak{g}_j \rightarrow \mathfrak{g}_{j-1}$ is injective for $j \geq \frac{1}{2}$ and surjective for $j \leq \frac{1}{2}$. Then one can define the $\mathcal{W}$-algebra $\mathcal{W}^k(\mathfrak{g}, f; \Gamma)$ associated with $\mathfrak{g}, f, \Gamma$ and a complex number $k$ via the (generalized) Drinfeld-Sokolov reduction \cite{FF1, KRW}.
\smallskip

Following \cite{KW, G}, we will embed $\mathcal{W}^k(\mathfrak{g}, f; \Gamma)$ into an affine vertex superalgebra under the assumption that
$\Gamma$ is a $\Z$-grading, i.e.,
\begin{align*}
\Gamma: \mathfrak{g} = \bigoplus_{j \in \Z} \mathfrak{g}_j.
\end{align*}
Let $\tau_k$ be the invariant bilinear form on $\mathfrak{g}_0$ defined by
\begin{align}\label{eq:tau-k}
\tau_k(u|v) = k(u|v) + \frac{1}{2} \kappa_{\mathfrak{g}} (u|v) - \frac{1}{2} \kappa_{\mathfrak{g}_0}(u|v),\quad
u, v \in \mathfrak{g}_0,
\end{align}
where $\kappa_{\mathfrak{g}}$, $\kappa_{\mathfrak{g}_0}$ are the Killing forms on $\mathfrak{g}$, $\mathfrak{g}_0$ respectively. Let $\widehat{\mathfrak{g}}_0=\mathfrak{g}_0 [t, t^{-1}]\oplus \C K$  denote the affine Lie superalgebra of $\mathfrak{g}_0$ at level $\tau_k$, which satisfies
\begin{align*}
[u_{(m)}, v_{(n)}] = [u, v]_{(m+n)} + m \tau_k(u|v)\delta_{m+n,0}K,\quad
u, v \in \mathfrak{g}_0,\quad
m, n \in \Z,
\end{align*}
\begin{align*}
[K,\widehat{\mathfrak{g}}_0]=0,
\end{align*}
where $u_{(m)} = u t^m$.
Define a $\widehat{\mathfrak{g}}_0$-module $V^{\tau_k}(\mathfrak{g}_0)$ by 
\begin{align*}
V^{\tau_k}(\mathfrak{g}_0) := U( \widehat{\mathfrak{g}}_0 ) \underset{U(\mathfrak{g}_0 [t] \oplus \C K)}{\otimes}\C,
\end{align*}
where $\C$ is a one dimensional $(\mathfrak{g}_0 [t] \oplus \C K)$-module, on which $K$ acts by $1$ and $\mathfrak{g}_0 [t]$ acts trivially. There is a unique vertex algebra structure on $V^{\tau_k}(\mathfrak{g}_0)$ such that $|0\rangle=1\otimes 1$ is the vacuum vector and $Y(u_{(-1)}\otimes v,z)=J^u(z):=\sum_{n\in\Z}u_{(n)}z^{-n-1}$, ($u \in \mathfrak{g}_0$), with OPEs
\begin{align*}
J^u(z) J^v(w) \sim \frac{J^{[u, v]}(w)}{z-w}+\frac{\tau_k(u|v)}{(z-w)^2},\quad
u, v \in \mathfrak{g}_0.
\end{align*}
It is called the affine vertex superalgebra of $\mathfrak{g}_0$ at level $\tau_k$.

Let $\Delta$ denote the root system of $\mathfrak{g}$ and $\Pi$ the set of its simple roots. The grading $\Gamma$ gives the decomposition $\Delta = \bigsqcup_{j \in \Z}\Delta_j$ where $\Delta_j = \{ \alpha \in \Delta \mid \mathfrak{g}_\alpha \subset \mathfrak{g}_j\}$. Then we can choose $\Delta$ and $\Pi$ such that $\Pi = \Pi_0 \sqcup \Pi_1$, where $\Pi_j = \Pi \cap \Delta_j$. 
For $\alpha\in\Pi_1$, let $\mathfrak{g}_{[\alpha]}\subset \mathfrak{g}$ denote the adjoint $\mathfrak{g}_0$-submodule generated by $\mathfrak{g}_\alpha$. It is simple, and so is its dual module, which we denote by $\C^{[\alpha]}$. More explicitly, take a basis $\{e_\alpha\}_{\alpha\in I\sqcup \Delta}$ of $\mathfrak{g}$ consisting of a basis $\{e_\alpha\}_{\alpha\in I}$ of the Cartan subalgebra $\mathfrak{h}$ and root vectors $e_{\alpha}$ of $\mathfrak{g}_\alpha$. Let $c_{\alpha,\beta}^\gamma$ be the structure constants, i.e., $[e_\alpha,e_\beta]=\sum_{\gamma}c_{\alpha,\beta}^\gamma e_\gamma$, extended to $[u,v]=\sum_{\gamma}c_{u,v}^\gamma e_\gamma$, $u,v,\in \mathfrak{g}$. By setting $v_\beta\in \mathfrak{g}^*$ to be the dual vector of $e_\beta$, we have 
\begin{align*}
\C^{[\alpha]} = \bigoplus_{\beta \in [\alpha]}\C v_\beta,\quad
[\alpha] := \{ \beta \in \Delta_1 \mid \beta - \alpha \in \Z \Delta_0 \},\quad
\alpha \in \Pi_1
\end{align*} 
and
\begin{align*}
u \cdot v_\beta = \sum_{\gamma \in [\alpha]}c_{\gamma, u}^\beta v_\gamma,\quad
u \in \mathfrak{g}_0,\quad
\beta \in [\alpha].
\end{align*}
Define a $\hat{\mathfrak{g}}_0$-module by
\begin{align*}
M_\alpha := U( \widehat{\mathfrak{g}}_0 ) \underset{U(\mathfrak{g}_0 [t] \oplus \C K)}{\otimes}\C^{[\alpha]},\quad
\alpha \in \Pi_1,
\end{align*}
where $\mathfrak{g}_0 [t] \oplus \C K$ acts on $\C^{[\alpha]}$ by $u_{(n)}=\delta_{n,0}u$, ($u\in\mathfrak{g}_0$, $n\geq0$), and $K$ acts by $1$. It has a unique $V^{\tau_k}(\mathfrak{g}_0)$-module structure induced by the $\hat{\mathfrak{g}}_0$-module structure.
Now, by \cite[Section 4.3]{G}, there exists a cochain complex $(C_k', d_{\mathrm{st}(0)})$ consisting of a $\Z_{\geq0}$-graded vertex superalgebra $C_k'$ and an odd vertex operator $d_{\mathrm{st}(0)}$, whose cohomology is a vertex superalgebra such that 
\begin{align*}
H^0 \left(C_k', d_{\mathrm{st}(0)}\right) \simeq V^{\tau_k}(\mathfrak{g}_0),\quad
H^1 \left(C_k', d_{\mathrm{st}(0)}\right) \simeq \bigoplus_{\alpha \in \Pi_1} M_\alpha
\end{align*}
as vertex superalgebras and as $V^{\tau_k}(\mathfrak{g}_0)$-modules, respectively.
Define an intertwining operator $S_\beta (z)$ of type $\binom{M_\alpha}{M_\alpha, V^{\tau_k}(\mathfrak{g}_0)}$ ($\alpha \in \Pi_1$) by
\begin{align}\label{eq;def-of-S}
S_\beta (z) := Y(v_\beta, z), \quad \beta \in [\alpha],
\end{align}
and a screening operator by 
\begin{align}\label{eq;alpha-screening}
\int Q_\alpha(z) dz:= \sum_{\beta \in [\alpha]} (f|e_\beta) \int S_\beta(z) dz\colon V^{\tau_k}(\mathfrak{g}_0) \rightarrow M_\alpha.
\end{align}
Note that $S_\beta(z)$ is uniquely determined by the property
\begin{align}\label{screening}
S_\beta(z) J^u(w) \sim \frac{Y(u\cdot v_\beta,w)}{z-w}=\sum_{\gamma \in [\alpha]} \frac{c_{u, \gamma}^\beta S^\gamma(w)}{z-w},\quad u\in\mathfrak{g}_0.
\end{align}
In \cite{G}, the formula
\begin{align*}
\partial S_\beta (z) = - \frac{(k+h^\vee)^{-1}}{(e_\beta|e_{-\beta})}\Bigl( :J^{h_\beta}(z)S_\beta(z): + \sum_{\begin{subarray}{c} \gamma \in [\alpha], \delta \in \Delta_0 \\ \gamma \neq \beta \end{subarray}} (-1)^{\bar{\gamma}\bar{\delta}}c_{\gamma, -\beta}^{\delta} :J^{e_\delta}(z)S_\gamma(z): \Bigr),
\end{align*}
where $h_\beta = [e_\beta, e_{-\beta}]$, is also used to characterize $S_\beta(z)$. We note that this is nothing but
$$\partial S_\beta (z)=Y(L_0v_\beta,z),$$
where $L(z)=\sum_{n\in\mathbb{Z}}L_nz^{-n-1}$ is the Segal-Sugawara conformal field of $\mathfrak{g}_0$. Thus \eqref{screening} is enough to characterize $S_\beta(z)$.
We have a vertex superalgebra homomorphism
\begin{align}\label{eq: Miura-map}
\mathcal{W}^k(\mathfrak{g},f;\Gamma) \hookrightarrow V^{\tau_k}(\mathfrak{g}_0),
\end{align}
called the Miura map \cite{KW}. The injectivity of this map is proven in \cite{Ar5,F2} if $\g$ is a simple Lie algebra and $f$ is a principal nilpotent element, but the proof there applies for arbitrary $\mathcal{W}$-algebras $\mathcal{W}^k(\mathfrak{g},f;\Gamma)$, see for example \cite{N}.
By \cite{FBZ,G} the image of the Miura map for generic $k$ coincides with 
\begin{align}\label{eq:W-screening}
\mathcal{W}^k(\mathfrak{g},f;\Gamma) \simeq \bigcap_{\alpha \in \Pi_1}\Ker \int Q_\alpha(z)\ dz.
\end{align}

\subsection{Subregular $\mathcal{W}$-algebras}\label{sec:sub-W}
We describe the isomorphism \eqref{eq:W-screening} more explicitly in the case $\mathfrak{g} = \mathfrak{sl}_{n+1}$ or $\mathfrak{so}_{2n+1}$ with a subregular nilpotent element $f_{\text{sub}}$. We have the natural representation $\mathfrak{sl}_{n+1} \hookrightarrow \mathfrak{gl}(V_1)$, $\mathfrak{so}_{2n+1} \hookrightarrow \mathfrak{gl}(V_2)$, where $V_1 = \C^{n+1}$, $V_2 = \C^{2n+1}$, see e.g., \cite{Hum}. Let $\{ e_i \}_{i \in I_s}$ denote the standard bases of $V_s$ with index sets
\begin{align*}
I_1 = \{1, \ldots, n+1\},\quad
I_2 = \{ 0, 1, \ldots, n, -1, \ldots, -n\},
\end{align*}
and $e_{i, j} \in \mathfrak{gl}(V_s)$ the elementary matrix $e_{i, j} \cdot e_m = \delta_{j, m} e_i$, ($i, j, m \in I_s$). Then the Cartan subalgebra $\mathfrak{h}$ of $\mathfrak{g}$ is spanned by
\begin{align*}
&h_i = e_{i, i} - e_{i+1, i+1},\quad
(i = 1, \ldots, n),\quad \text{if}\ \mathfrak{g} = \mathfrak{sl}_{n+1},\\
&h_j =\begin{cases}t_j - t_{j+1},&(j = 1, \ldots, n-1), \\ t_n,&(j =n),\end{cases}
\quad \text{if}\ \mathfrak{g} = \mathfrak{so}_{2n+1},
\end{align*}
where $t_i = e_{i, i} - e_{-i, -i}$. 
The normalized invariant bilinear form $(\cdot | \cdot)$ on $\mathfrak{g}$ is given by $(u|v) = \operatorname{tr}(u \circ v)$ if $\mathfrak{g} = \mathfrak{sl}_{n+1}$, and by $(u|v) = \frac{1}{2}\operatorname{tr}(u \circ v)$ if $\mathfrak{g} = \mathfrak{so}_{2n+1}$, which gives an isomorphism $\nu \colon \mathfrak{h} \xrightarrow{\sim} \mathfrak{h}^*$. Then the set of simple roots of $\mathfrak{g}$ is $\Pi=\{\alpha_i\}_{i=1}^n$ where $\alpha_i = \nu(h_i) \in \mathfrak{h}^*$. Choose a subregular nilpotent element $f_\mathrm{sub}$ of $\mathfrak{g}$ and a semisimple element $x$ of $\mathfrak{g}$ such that $\operatorname{ad}_x$ defines a good $\Z$-grading $\Gamma=\Gamma_{\text{sub}}$ on $\mathfrak{g}$ as follows:
\begin{align*}
&f_\mathrm{sub} = \sum_{i=2}^{n}e_{i+1,i},\quad
x = \frac{1}{2(n+1)} \sum_{i=1}^n (n-i+1)(in+i-2)h_i,\quad \text{if}\ \mathfrak{g} = \mathfrak{sl}_{n+1},\\
&f_\mathrm{sub} = \sum_{i=2}^{n-1}(e_{i+1, i} - e_{-i, -i-1}) + e_{0,n} - e_{-n,0},\quad
x = \sum_{i=1}^n(n-i+1)t_i-t_1,\quad\text{if}\ \mathfrak{g} = \mathfrak{so}_{2n+1}.
\end{align*}
The weighted Dynkin diagrams corresponding to $\Gamma_{\text{sub}}$ are as follows.
\begin{align*}
\quad\\
\setlength{\unitlength}{1mm}
\begin{picture}(0,0)(20,10)
\put(-38,9){$\mathfrak{g} = \mathfrak{sl}_n:$}
\put(0,10){\circle{2}}
\put(-1,13){\footnotesize$0$}
\put(-1,5){\footnotesize$\alpha_1$}
\put(1,10.3){\line(1,0){8}}
\put(10,10){\circle{2}}
\put(9,13){\footnotesize$1$}
\put(9,5){\footnotesize$\alpha_2$}
\put(11,10.3){\line(1,0){6}}
\put(18.5,9.4){$\cdot$}
\put(20,9.4){$\cdot$}
\put(21.5,9.4){$\cdot$}
\put(24.5,10.3){\line(1,0){6}}
\put(32,10){\circle{2}}
\put(31,13){\footnotesize$1$}
\put(31,5){\footnotesize$\alpha_{n-2}$}
\put(33,10.3){\line(1,0){8}}
\put(42,10){\circle{2}}
\put(41,13){\footnotesize$1$}
\put(41,5){\footnotesize$\alpha_{n-1}$}
\put(43,10.3){\line(1,0){8}}
\put(52,10){\circle{2}}
\put(51,13){\footnotesize$1$}
\put(51,5){\footnotesize$\alpha_{n}$}
\put(54,9){.}
\end{picture}\\
\end{align*}
\begin{align*}
\quad\\
\setlength{\unitlength}{1mm}
\begin{picture}(0,0)(20,10)
\put(-38,9){$\mathfrak{g} = \mathfrak{so}_{2n+1}:$}
\put(0,10){\circle{2}}
\put(-1,13){\footnotesize$0$}
\put(-1,5){\footnotesize$\alpha_1$}
\put(1,10.3){\line(1,0){8}}
\put(10,10){\circle{2}}
\put(9,13){\footnotesize$1$}
\put(9,5){\footnotesize$\alpha_2$}
\put(11,10.3){\line(1,0){6}}
\put(18.5,9.4){$\cdot$}
\put(20,9.4){$\cdot$}
\put(21.5,9.4){$\cdot$}
\put(24.5,10.3){\line(1,0){6}}
\put(32,10){\circle{2}}
\put(31,13){\footnotesize$1$}
\put(31,5){\footnotesize$\alpha_{n-2}$}
\put(33,10.3){\line(1,0){8}}
\put(42,10){\circle{2}}
\put(41,13){\footnotesize$1$}
\put(41,5){\footnotesize$\alpha_{n-1}$}
\put(43,10.5){\line(1,0){8}}
\put(43,9.5){\line(1,0){8}}
\put(45.5,9){\Large$>$}
\put(52,10){\circle{2}}
\put(51,13){\footnotesize$1$}
\put(51,5){\footnotesize$\alpha_{n}$}
\put(54,9){.}
\end{picture}\\
\end{align*}

The Lie subalgebra $\mathfrak{g}_0 = \{ u \in \mathfrak{g} \mid [x, u] = 0\}$ decomposes as
\begin{align*}
\mathfrak{g}_0 = \mathfrak{g}_0^\mathrm{red} \oplus \mathfrak{z},\quad
\mathfrak{g}_0^\mathrm{red} := \operatorname{Span}\{e_1, h_1, f_1\},\quad
\mathfrak{z} := \operatorname{Span}\{\widetilde{h}_2, h_3, \ldots, h_n\},
\end{align*}
where $\widetilde{h}_2 = h_2 + \frac{1}{2}h_1$ and
\begin{align*}
\begin{array}{lll}
e_1 = e_{1,2},&f_1 = e_{2,1},& \text{if}\ \mathfrak{g} = \mathfrak{sl}_{n+1},\\
e_1 = e_{1,2}-e_{-2,-1},& f_1 = e_{2,1}-e_{-1,-2},&\text{if}\ \mathfrak{g} = \mathfrak{so}_{2n+1}.
\end{array}
\end{align*}
The Lie subalgebra $\mathfrak{z}$ is commutative and $\mathfrak{g}_0^\mathrm{red}$ is isomorphic to $\mathfrak{sl}_2$ by
\begin{align*}
\begin{array}{ccc}
\mathfrak{sl}_2& \xrightarrow{\sim}&\mathfrak{g}_0^{\mathrm{red}}\\
e,h,f&\mapsto&e_1,h_1,f_1.
\end{array}
\end{align*}
The restriction of $\tau_k$ \eqref{eq:tau-k} on $\mathfrak{g}_0$ is
\begin{align*}
\tau_k(u|v) =
\begin{cases}
(k+h^\vee)(u|v), & u, v \in \mathfrak{z},\\
(k+h^\vee-2)(u|v), & u, v \in \mathfrak{g}_0^{\mathrm{red}},\\
0, & u\in \mathfrak{z},\ v\in \mathfrak{g}_0^{\mathrm{red}},
\end{cases}
\end{align*}
where $h^\vee$ is the dual Coxeter number of $\mathfrak{g}$, which is $n+1$, (resp.\ $2n-1$), if $\mathfrak{g} = \mathfrak{sl}_{n+1}$, (resp.\ $\mathfrak{so}_{2n+1}$). 
Therefore, the affine vertex algebra $V^{\tau_k}(\mathfrak{g}_0)$ decomposes as
\begin{align}\label{g1-decomposition}
V^{\tau_k}(\mathfrak{g}_0) \simeq V^{k+h^\vee-2}(\mathfrak{sl}_2) \otimes \pi^{k+h^\vee}_\mathfrak{z},
\end{align}
where $\pi^{k+h^\vee}_\mathfrak{z}$ is the Heisenberg vertex algebra associated with $\mathfrak{z}$ at level $k+h^\vee$ (Section\ \ref{Sect: Heisenberg}). 

Since
\begin{align}\label{eq; alpha}
[\alpha_2] = \{\alpha_2, \alpha_1+\alpha_2\},\quad
[\alpha_i] = \{\alpha_i\},\quad
i = 3, \ldots, n,
\end{align}\
the screening operators $\int Q_i(z) dz:=\int Q_{\alpha_i}(z) dz$ in \eqref{eq;alpha-screening} are
\begin{align}\label{i-screening operator}
\int Q_i(z)=\int S_{\alpha_i}(z) dz\colon V^{\tau_k}(\mathfrak{g}_0) \rightarrow M_{\alpha_i},\quad i=2,\ldots,n,
\end{align} 

First, consider \eqref{i-screening operator} in the case $i = 3, \ldots, n$. The orthogonal decomposition
\begin{align*}
\mathfrak{z} = \mathfrak{z}_i^\perp \oplus \mathfrak{z}_i,\quad
\mathfrak{z}_i = \C h_i,\quad
\mathfrak{z}_i^\perp = \{ h \in \mathfrak{z} \mid \alpha_i(h) = 0 \}.
\end{align*}
induces the decomposition of the Heisenberg vertex algebra
\begin{align*}
\pi^{k+h^\vee}_\mathfrak{z} \simeq \pi^{k+h^\vee}_{\mathfrak{z}_i^\perp} \otimes \pi^{k+h^\vee}_{\mathfrak{z}_i}.
\end{align*}
It follows from \eqref{g1-decomposition} and \eqref{eq; alpha} that we have an isomorphism of $V^{\tau_\kappa}(\mathfrak{g}_0)$-modules 
\begin{align*}
\begin{array}{ccc}
M_{\alpha_i}& \simeq &\widetilde{M}_{\alpha_i}:=V^{k+h^\vee-2}(\mathfrak{sl}_2) \otimes \pi^{k+h^\vee}_{\mathfrak{z}_i^\perp} \otimes \pi^{k+h^\vee}_{\mathfrak{z}_i, -\alpha_i}\\
v_{\alpha_i}&\mapsto &|0\rangle\otimes|0\rangle\otimes|-\alpha_i\rangle.
\end{array}
\end{align*}
Thus $S_{\alpha_i}(z)$ is identified with
\begin{align*}
\int S_{\alpha_i}(z)\ dz = \int :\mathrm{e}^{-\frac{1}{k+h^\vee} \int \alpha_i(z)}:\ dz\quad
\colon V^{k+h^\vee-2}(\mathfrak{sl}_2) \otimes \pi^{k+h^\vee}_{\mathfrak{z}_i^\perp} \otimes \pi^{k+h^\vee}_{\mathfrak{z}_i} \rightarrow \widetilde{M}_{\alpha_i},
\end{align*}
where $\alpha_i(z) = J^{h_i}(z)$, and, therefore,
\begin{align}\label{eq:sub-qscr-i}
\int Q_i(z)\ dz = \int :\mathrm{e}^{-\frac{1}{k+h^\vee} \int \alpha_i(z)}:\ dz.
\end{align}
Next, consider \eqref{i-screening operator} in the case $\alpha=\alpha_2$. To this end, we use the Wakimoto realization of $V^{k+h^\vee-2}(\mathfrak{sl}_2)$ (cf. \cite{F05}).
\begin{align*}
&\rho_{\mathfrak{sl}_2} \colon V^{k+h^\vee-2}(\mathfrak{sl}_2) \hookrightarrow M_{\mathfrak{sl}_2} \otimes \pi^{k+h^\vee}_a,\\
&J^{e_1}(z) \mapsto \beta(z),\quad
J^{h_1}(z) \mapsto -2:\gamma(z)\beta(z):+a(z),\\
&J^{f_1}(z) \mapsto -:\gamma(z)^2\beta(z):+(k+h^\vee-2)\partial\gamma(z)+\gamma(z)a(z),
\end{align*}
which gives an isomorphism for generic $k$
\begin{align*}
V^{k+h^\vee-2}(\mathfrak{sl}_2) \simeq \operatorname{Ker}\left(\int :\beta(z)\mathrm{e}^{-\frac{1}{k+h^\vee} \int a(z)}:\ dz: M_{\mathfrak{sl}_2} \otimes \pi^{k+h^\vee}_a\rightarrow M_{\mathfrak{sl}_2} \otimes \pi^{k+h^\vee}_{a,-a}\right).
\end{align*}
Here $M_{\mathfrak{sl}_2}$ is the $\beta\gamma$-system vertex algebra generated by the even fields $\beta(z), \gamma(z)$ with OPEs 
\begin{align*}
\beta(z)\gamma(w) \sim \frac{1}{z-w},\quad
\beta(z)\beta(w) \sim 0 \sim \gamma(z)\gamma(w),
\end{align*}
and $\pi^{k+h^\vee}_a$ is the Heisenberg vertex algebra generated by an even field $a(z)$ with an OPE
\begin{align*}
a(z)a(w) \sim \frac{2(k+h^\vee)}{(z-w)^2}.
\end{align*}
Then it follows from \eqref{g1-decomposition} and the isomorphism of vertex algebras 
\begin{align*}
&\pi^{k+h^\vee}_\mathfrak{h} \xrightarrow{\sim} \pi^{k+h^\vee}_a \otimes \pi^{k+h^\vee}_\mathfrak{z}\\
&\alpha_1(z) \mapsto a(z),\quad
\alpha_2(z) \mapsto J^{\widetilde{h}_2}(z)-\frac{1}{2}a(z),\quad
\alpha_i(z) \mapsto J^{h_i}(z),\ (i=3, \ldots, n),
\end{align*}
that we have a vertex algebra embedding
\begin{align}\label{eq: g0-Wakimoto}
\rho_{\mathfrak{g}_0} \colon V^{\tau_k}(\mathfrak{g}_0) \hookrightarrow M_{\mathfrak{sl}_2} \otimes \pi^{k+h^\vee}_\mathfrak{h},
\end{align}
which gives an isomorphism for generic $k$
\begin{align}\label{eq:Vtau-Wakimoto}
V^{\tau_k}(\mathfrak{g}_0) \simeq \operatorname{Ker}\left(\int :\beta(z)\mathrm{e}^{-\frac{1}{k+h^\vee} \int a(z)}:\ dz:\ M_{\mathfrak{sl}_2} \otimes \pi^{k+h^\vee}_\mathfrak{h}\rightarrow M_{\mathfrak{sl}_2} \otimes \pi^{k+h^\vee}_{\mathfrak{h},-\alpha_1}\right).
\end{align}
It gives a $V^{\tau_k}(\mathfrak{g}_0)$-module structure on $M_{\mathfrak{sl}_2} \otimes \pi^{k+h^\vee}_{\mathfrak{h}, -\alpha_2}$. Let $\widetilde{M}_{\alpha_2}$ be a $V^{\tau_k}(\mathfrak{g}_0)$-submodule generated by the subspace 
\begin{align*}
\widetilde{\C}^{[\alpha_2]} = \C |-\alpha_2\rangle \oplus \C\gamma_{(-1)}|-\alpha_2\rangle.
\end{align*}
\begin{lemma}\label{lem:M-isom}
For generic $k$, $M_{\alpha_2} \simeq \widetilde{M}_{\alpha_2}$ as $V^{\tau_k}(\mathfrak{g}_0)$-modules.
\begin{proof}
The linear map
\begin{align*}
\C^{[\alpha_2]} \xrightarrow{\sim} \widetilde{\C}^{[\alpha_2]},\quad
v_{\alpha_2} \mapsto |-\alpha_2\rangle,\quad
v_{\alpha_1+\alpha_2} \mapsto -\gamma_{(-1)}|-\alpha_2\rangle
\end{align*}
gives an isomorphism as $(\mathfrak{g}_0[t] \oplus \C K)$-modules. By the universality of the induced modules, it induces a surjective $V^{\tau_k}(\mathfrak{g}_0)$-module homomorphism
\begin{align}\label{eq:M-to-M}
M_{\alpha_2} \twoheadrightarrow \widetilde{M}_{\alpha_2}.
\end{align}
Since $\C^{[\alpha_2]}$ is simple as a $\mathfrak{g}_0$-module, $M_{\alpha_2}$is simple as a $V^{\tau_k}(\mathfrak{g}_0)$-module for generic $k$. Thus \eqref{eq:M-to-M} is an isomorphism for such $k$.
\end{proof}
\end{lemma}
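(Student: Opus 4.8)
The plan is to reduce the claim to a comparison at the lowest conformal degree and then to a generic-irreducibility argument. First I would write down the explicit linear map
\begin{align*}
\C^{[\alpha_2]} \to \widetilde{\C}^{[\alpha_2]},\quad v_{\alpha_2}\mapsto |-\alpha_2\rangle,\quad v_{\alpha_1+\alpha_2}\mapsto -\gamma_{(-1)}|-\alpha_2\rangle,
\end{align*}
and verify that it intertwines the two $(\mathfrak{g}_0[t]\oplus\C K)$-module structures. On both sides the modes $u_{(n)}$ with $n\geq 1$ act by zero and $K$ acts as the identity, so the only content is that the zero modes $u_{(0)}=u$ for $u\in\mathfrak{g}_0$ act on $|-\alpha_2\rangle$ and $\gamma_{(-1)}|-\alpha_2\rangle$ exactly as they act on $v_{\alpha_2}$ and $v_{\alpha_1+\alpha_2}$. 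Since the target space sits inside the $V^{\tau_k}(\mathfrak{g}_0)$-module $M_{\mathfrak{sl}_2}\otimes\pi^{k+h^\vee}_{\mathfrak{h},-\alpha_2}$, whose action is prescribed by the Wakimoto embedding $\rho_{\mathfrak{g}_0}$ of \eqref{eq: g0-Wakimoto}, these are finite, routine computations using the explicit formulas for $\rho_{\mathfrak{sl}_2}$ and the Heisenberg weight $-\alpha_2$.

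Granting the bottom-level isomorphism, I would invoke the universal property of the induced module $M_{\alpha_2}=U(\widehat{\mathfrak{g}}_0)\otimes_{U(\mathfrak{g}_0[t]\oplus\C K)}\C^{[\alpha_2]}$: a $(\mathfrak{g}_0[t]\oplus\C K)$-module map from $\C^{[\alpha_2]}$ into any $\widehat{\mathfrak{g}}_0$-module of level $\tau_k$ lifts uniquely to a $\widehat{\mathfrak{g}}_0$-module map out of $M_{\alpha_2}$. Applying this to the inclusion $\widetilde{\C}^{[\alpha_2]}\hookrightarrow\widetilde{M}_{\alpha_2}$ produces a $V^{\tau_k}(\mathfrak{g}_0)$-module homomorphism $M_{\alpha_2}\to\widetilde{M}_{\alpha_2}$, which is surjective because, by definition, $\widetilde{M}_{\alpha_2}$ is generated as a $V^{\tau_k}(\mathfrak{g}_0)$-module by $\widetilde{\C}^{[\alpha_2]}$.

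It then remains to promote this surjection to an isomorphism, which I would do by showing that $M_{\alpha_2}$ is irreducible as a $V^{\tau_k}(\mathfrak{g}_0)$-module for generic $k$; a nonzero map out of a simple module is automatically injective. This last step is where I expect the genuine work to lie. Because $\C^{[\alpha_2]}$ is the simple two-dimensional $\mathfrak{g}_0$-module, under the decomposition \eqref{g1-decomposition} the module $M_{\alpha_2}$ splits as the affine $\mathfrak{sl}_2$-module induced from the two-dimensional representation at level $k+h^\vee-2$, tensored with a Heisenberg Fock module over $\pi^{k+h^\vee}_{\mathfrak{z}}$. I would establish generic simplicity by the same conformal-dimension mechanism as in Lemma \ref{Induced modules at generic level} and Proposition \ref{generic gl11-Wakimoto}: reducibility would yield a nontrivial homomorphism from another induced module, forcing an equality of conformal weights between two distinct layers and hence a nontrivial condition on $k$. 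Such coincidences hold only on a proper Zariski-closed subset of levels, so $M_{\alpha_2}$ is simple, and therefore the homomorphism above is an isomorphism, for all $k$ outside this subset. The only point requiring care is to pin down the excluded locus explicitly enough to justify the phrase \emph{generic $k$}.
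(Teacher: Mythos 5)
Your proposal follows the paper's proof essentially verbatim: the same explicit bottom-level isomorphism $\C^{[\alpha_2]}\xrightarrow{\sim}\widetilde{\C}^{[\alpha_2]}$, the same lift via universality of induced modules to a surjection $M_{\alpha_2}\twoheadrightarrow\widetilde{M}_{\alpha_2}$, and the same conclusion from generic simplicity of $M_{\alpha_2}$ (which the paper deduces directly from simplicity of $\C^{[\alpha_2]}$ as a $\mathfrak{g}_0$-module, where you additionally sketch the conformal-weight mechanism behind it). The argument is correct and matches the paper's.
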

Under the realization \eqref{eq:Vtau-Wakimoto}, Lemma \ref{lem:M-isom} implies that $S_{\alpha_2}(z)$ is identified with
\begin{align*}
S_{\alpha_2}(z) =\ :\mathrm{e}^{-\frac{1}{k+h^\vee} \int \alpha_2(z)}:\quad\colon \mathrm{Im}(\rho_{\mathfrak{g}_0})\rightarrow \widetilde{M}_{\alpha_2}((z)),
\end{align*}
and thus
\begin{align}\label{eq:sub-qscr-2}
\int Q_2(z)\ dz = \int :\mathrm{e}^{-\frac{1}{k+h^\vee} \int \alpha_2(z)}:\ dz.
\end{align}
We also note that $S_{\alpha_1+\alpha_2}(z) =\ -:\gamma(z)\mathrm{e}^{-\frac{1}{k+h^\vee} \int \alpha_2(z)}:$.
By \eqref{eq:W-screening}, \eqref{eq:sub-qscr-i} and \eqref{eq:sub-qscr-2}, we conclude
\begin{align*}
\mathcal{W}^k (\mathfrak{g}, f_\mathrm{sub})
&\simeq \bigcap_{i=2}^n \operatorname{Ker}\Bigl( \int Q_i(z)\ dz \colon V^{\tau_k}(\mathfrak{g}_0) \rightarrow M_{\alpha_i} \Bigr)\\
&\simeq \bigcap_{i=2}^n \operatorname{Ker}\Bigl( \int :\mathrm{e}^{-\frac{1}{k+h^\vee} \int \alpha_i(z)}:\ dz\quad
\colon \mathrm{Im}(\rho_{\mathfrak{g}_0}) \rightarrow \widetilde{M}_{\alpha_i} \Bigr).
\end{align*}
The composition $\Upsilon_1$ of \eqref{eq: Miura-map} and \eqref{eq: g0-Wakimoto} gives a vertex algebra embedding
of  the subregular $\mathcal{W}$-algebra $\mathcal{W}^k (\mathfrak{g}, f_\mathrm{sub}):=\mathcal{W}^k (\mathfrak{g},f_\mathrm{sub} ;\Gamma_\mathrm{sub})$
\begin{align}\label{eq: Upsilon_1} 
\Upsilon_1: \mathcal{W}^k (\mathfrak{g}, f_\mathrm{sub})\hookrightarrow M_{\mathfrak{sl}_2} \otimes \pi^{k+h^\vee}_\mathfrak{h}.
\end{align}
Now \eqref{eq:Vtau-Wakimoto} implies the following realization of the image $\mathrm{Im}(\Upsilon_1)$.
\begin{theorem}\label{thm:sub-W}
If $k$ is generic, then we have 
\begin{align*}
\mathrm{Im}(\Upsilon_1)=\bigcap_{i = 1}^n \operatorname{Ker} \int Q_i(z) dz
\end{align*}
for $\mathfrak{g} = \mathfrak{sl}_{n+1}$, $\mathfrak{so}_{2n+1}$, where
\begin{align*}
Q_1(z) = :\beta(z)\mathrm{e}^{-\frac{1}{k+h^\vee} \int \alpha_1(z)}:,\quad
Q_i(z) = :\mathrm{e}^{-\frac{1}{k+h^\vee} \int \alpha_i(z)}:,\ 
(i = 2, \ldots, n).
\end{align*}
\end{theorem}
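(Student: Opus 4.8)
The plan is to assemble the free field data prepared above into a single joint-kernel description inside $\mathcal{F}:=M_{\mathfrak{sl}_2}\otimes\pi^{k+h^\vee}_{\mathfrak{h}}$, by adjoining the Wakimoto screening $Q_1$ of the $\mathfrak{sl}_2$-factor to the screenings $Q_2,\dots,Q_n$ that already cut out the $\mathcal{W}$-algebra. Since $\Upsilon_1$ is by definition \eqref{eq: Upsilon_1} the composite of the Miura map \eqref{eq: Miura-map} with the Wakimoto embedding $\rho_{\mathfrak{g}_0}$ of \eqref{eq: g0-Wakimoto}, and since the simple roots lying in degree one are exactly $\alpha_2,\dots,\alpha_n$, the realization \eqref{eq:W-screening} gives, for generic $k$,
\[
\mathrm{Im}(\Upsilon_1)=\rho_{\mathfrak{g}_0}\Bigl(\mathcal{W}^k(\mathfrak{g},f_{\mathrm{sub}})\Bigr)=\rho_{\mathfrak{g}_0}\Bigl(\bigcap_{i=2}^{n}\operatorname{Ker}\int Q_i(z)\,dz\Bigr),
\]
where on the right the screenings act on $V^{\tau_k}(\mathfrak{g}_0)$ with values in $M_{\alpha_i}$. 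Everything then reduces to transporting these kernels through $\rho_{\mathfrak{g}_0}$ and recognising the extra Wakimoto layer.

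First I would invoke \eqref{eq:Vtau-Wakimoto}, which for generic $k$ identifies $\mathrm{Im}(\rho_{\mathfrak{g}_0})$ with $\operatorname{Ker}\int Q_1(z)\,dz$ on $\mathcal{F}$. Next I would establish the compatibility of each remaining screening with $\rho_{\mathfrak{g}_0}$: under the module isomorphisms $M_{\alpha_i}\simeq\widetilde{M}_{\alpha_i}$ — coming from the splitting $\mathfrak{z}=\mathfrak{z}_i^\perp\oplus\mathfrak{z}_i$ for $i\geq3$ and from Lemma \ref{lem:M-isom} for $i=2$ — the operators $\int Q_i$ become the exponential operators $\int:\mathrm{e}^{-\frac{1}{k+h^\vee}\int\alpha_i(z)}:\,dz$ of \eqref{eq:sub-qscr-i} and \eqref{eq:sub-qscr-2}, whose defining fields $\alpha_i(z)$ already live in $\pi^{k+h^\vee}_{\mathfrak{h}}\subset\mathcal{F}$ and so define operators on all of $\mathcal{F}$. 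Concretely this produces, for each $i=2,\dots,n$, a commuting square whose vertical arrows $\rho_{\mathfrak{g}_0}$ and $M_{\alpha_i}\simeq\widetilde{M}_{\alpha_i}$ are injective, whence
\[
\rho_{\mathfrak{g}_0}\Bigl(\operatorname{Ker}\int Q_i(z)\,dz\Bigr)=\mathrm{Im}(\rho_{\mathfrak{g}_0})\cap\operatorname{Ker}\int Q_i(z)\,dz,\qquad i=2,\dots,n.
\]

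Combining these and using that injective maps send intersections to intersections, the proof would conclude, for generic $k$, with the chain
\begin{align*}
\mathrm{Im}(\Upsilon_1)
&=\rho_{\mathfrak{g}_0}\Bigl(\bigcap_{i=2}^{n}\operatorname{Ker}\int Q_i(z)\,dz\Bigr)
=\mathrm{Im}(\rho_{\mathfrak{g}_0})\cap\bigcap_{i=2}^{n}\operatorname{Ker}\int Q_i(z)\,dz\\
&=\operatorname{Ker}\int Q_1(z)\,dz\;\cap\;\bigcap_{i=2}^{n}\operatorname{Ker}\int Q_i(z)\,dz
=\bigcap_{i=1}^{n}\operatorname{Ker}\int Q_i(z)\,dz .
\end{align*}
I expect the main obstacle to be the compatibility step: one must verify carefully that each screening $\int Q_i$, a priori defined only on the subalgebra $\mathrm{Im}(\rho_{\mathfrak{g}_0})\cong V^{\tau_k}(\mathfrak{g}_0)$, agrees after the identification $M_{\alpha_i}\simeq\widetilde{M}_{\alpha_i}$ with the globally defined exponential operator on $\mathcal{F}$, so that no kernel is gained or lost. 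Note that one does \emph{not} need $Q_1,\dots,Q_n$ to commute on all of $\mathcal{F}$; the enlarged screenings are controlled because the intersection is first taken with $\operatorname{Ker}\int Q_1=\mathrm{Im}(\rho_{\mathfrak{g}_0})$, and genericity of $k$ is exactly what makes \eqref{eq:W-screening}, \eqref{eq:Vtau-Wakimoto} and Lemma \ref{lem:M-isom} available.
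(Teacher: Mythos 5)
Your proposal is correct and follows essentially the same route as the paper: it combines the Miura-plus-screening description \eqref{eq:W-screening} of $\mathcal{W}^k(\mathfrak{g},f_{\mathrm{sub}})$ inside $V^{\tau_k}(\mathfrak{g}_0)$ with the Wakimoto identification \eqref{eq:Vtau-Wakimoto} of $\mathrm{Im}(\rho_{\mathfrak{g}_0})$ as $\operatorname{Ker}\int Q_1$, and handles the compatibility of the remaining screenings exactly as the paper does, via \eqref{eq:sub-qscr-i} for $i\geq 3$ and Lemma \ref{lem:M-isom} for $i=2$. The "main obstacle" you flag is precisely the content the paper supplies in those identifications, so nothing is missing.
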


\subsection{Principal $\mathcal{W}$-superalgebras}
We describe the isomorphism \eqref{eq:W-screening} more explicitly in the case  $\mathfrak{g} = \mathfrak{sl}(1|n+1)$ or $\mathfrak{osp}(2|2n)$ with a principal nilpotent element. We have the natural representation $\mathfrak{sl}(1|n+1) \hookrightarrow \mathfrak{gl}(U_1)$, $\mathfrak{osp}(2|2n) \hookrightarrow \mathfrak{gl}(U_2)$, where $U_1 = \C^{1|n+1}$, $U_2 = \C^{2|2n}$, see e.g., \cite[Section 2]{K1}. Let $\{ e_i \}_{i \in J_s}$ denote the standard bases of $U_s$ with index sets
\begin{align*}
&J_1 = J_{1,\bar{0}} \sqcup J_{1,\bar{1}},\quad
J_{1,\bar{0}} = \{0\},\quad
J_{1,\bar{1}} = \{1, \ldots, n+1\},\\
&J_2 = J_{2,\bar{0}} \sqcup J_{2,\bar{1}},\quad
J_{2, \bar{0}} = \{+, -\},\quad
J_{2, \bar{1}} = \{1, \ldots, n, -1, \ldots, -n\},
\end{align*}
where $e_i$ is even, (resp.\ odd), if $i \in J_{s, \bar{0}}$, (resp.\ $i \in J_{s, \bar{1}}$), and $e_{i, j} \in \mathfrak{gl}(U_s)$ the elementary matrix  $e_{i, j} \cdot e_m = \delta_{j, m} e_i$, ($i, j, m \in J_s$). Then the Cartan subalgebra $\mathfrak{h}$ of $\mathfrak{g}$ is spanned by 
\begin{align*}
h_i=
\begin{cases}
-e_{0, 0} - e_{1, 1},& (i=0),\\
e_{i, i} - e_{i+1, i+1},& (i = 1, \ldots, n),
\end{cases}
\quad \text{if}\ \mathfrak{g} = \mathfrak{sl}(1|n+1),
\end{align*}
\begin{align*}
h_j=
\begin{cases}
-\frac{1}{2}(t_0+t_1),& (j=0),\\
\frac{1}{2}(t_j - t_{j+1}),& (j=1, \ldots, n-1),\\
t_n,& (j=n),
\end{cases}
\quad \text{if}\ \mathfrak{g} = \mathfrak{osp}(2|2n),
\end{align*}
where $t_0=e_{+, +} - e_{-, -}$ and $t_j=e_{j,j}-e_{-j,-j}$, ($1\leq j\leq n$).
The normalized invariant bilinear form $(\cdot | \cdot)$ on $\mathfrak{g}$ is given by $(u|v) = -\operatorname{str}(u \circ v)$, where $\operatorname{str}$ denotes the super trace. It induces an isomorphism $\nu \colon \mathfrak{h} \xrightarrow{\sim} \mathfrak{h}^*$.
Then the set of the simple roots is $\Pi=\{\alpha_i\}_{i=0}^n$ where $\alpha_i = \nu(h_i) \in \mathfrak{h}^*$. Choose a principal nilpotent element $f_\mathrm{prin}$ of the even part of $\mathfrak{g}$ and a semisimple element $x$ of $\mathfrak{g}$ such that $\operatorname{ad}_x$ defines a good $\Z$-grading $\Gamma=\Gamma_\mathrm{prin}$ on $\mathfrak{g}$ as follows:
\begin{align*}
&f_\mathrm{prin} = \sum_{i=1}^{n}e_{i+1,i},\quad
x = \sum_{i=0}^{n+1} \left( \frac{n+1}{2}-i+1 \right) e_{i,i} - e_{0,0},\quad \text{if}\ \mathfrak{g} = \mathfrak{sl}(1|n+1),\\
&f_\mathrm{prin} = \sum_{i=1}^{n-1}(e_{i+1, i} - e_{-i, -i-1}) + e_{-n, n},\quad
x = \sum_{i=0}^n \left( n-i+\frac{1}{2} \right) t_i-t_0,\quad \text{if}\ \mathfrak{g} = \mathfrak{osp}(2|2n).
\end{align*}
The weighted Dynkin diagrams corresponding to $\Gamma_{\mathrm{prin}}$ are as follows.
\begin{align*}
\quad\\
\setlength{\unitlength}{1mm}
\begin{picture}(0,0)(20,10)
\put(-38,9){$\mathfrak{g} = \mathfrak{sl}(1|n+1)$:}
\put(0,10){\circle{2}}
\put(-1.1,9.3){\footnotesize$\times$}
\put(-1,13){\footnotesize$0$}
\put(-1,5){\footnotesize$\alpha_0$}
\put(1,10.3){\line(1,0){8}}
\put(10,10){\circle{2}}
\put(9,13){\footnotesize$1$}
\put(9,5){\footnotesize$\alpha_1$}
\put(11,10.3){\line(1,0){6}}
\put(18.5,9.4){$\cdot$}
\put(20,9.4){$\cdot$}
\put(21.5,9.4){$\cdot$}
\put(24.5,10.3){\line(1,0){6}}
\put(32,10){\circle{2}}
\put(31,13){\footnotesize$1$}
\put(31,5){\footnotesize$\alpha_{n-2}$}
\put(33,10.3){\line(1,0){8}}
\put(42,10){\circle{2}}
\put(41,13){\footnotesize$1$}
\put(41,5){\footnotesize$\alpha_{n-1}$}
\put(43,10.3){\line(1,0){8}}
\put(52,10){\circle{2}}
\put(51,13){\footnotesize$1$}
\put(51,5){\footnotesize$\alpha_{n}$}
\put(54,9){.}
\end{picture}\\
\end{align*}
\begin{align*}
\quad\\
\setlength{\unitlength}{1mm}
\begin{picture}(0,0)(20,10)
\put(-38,9){$\mathfrak{g} = \mathfrak{osp}(2|2n)$:}
\put(0,10){\circle{2}}
\put(-1.1,9.3){\footnotesize$\times$}
\put(-1,13){\footnotesize$0$}
\put(-1,5){\footnotesize$\alpha_0$}
\put(1,10.3){\line(1,0){8}}
\put(10,10){\circle{2}}
\put(9,13){\footnotesize$1$}
\put(9,5){\footnotesize$\alpha_1$}
\put(11,10.3){\line(1,0){6}}
\put(18.5,9.4){$\cdot$}
\put(20,9.4){$\cdot$}
\put(21.5,9.4){$\cdot$}
\put(24.5,10.3){\line(1,0){6}}
\put(32,10){\circle{2}}
\put(31,13){\footnotesize$1$}
\put(31,5){\footnotesize$\alpha_{n-2}$}
\put(33,10.3){\line(1,0){8}}
\put(42,10){\circle{2}}
\put(41,13){\footnotesize$1$}
\put(41,5){\footnotesize$\alpha_{n-1}$}
\put(43,10.5){\line(1,0){8}}
\put(43,9.5){\line(1,0){8}}
\put(45.5,9){\Large$<$}
\put(52,10){\circle{2}}
\put(51,13){\footnotesize$1$}
\put(51,5){\footnotesize$\alpha_{n}$}
\put(54,9){.}
\end{picture}\\
\end{align*}
The Lie subalgebra $\mathfrak{g}_0=\{u\in\mathfrak{g}\mid [x,u]=0\}$ decomposes as
\begin{align*}
\mathfrak{g}_0 = \mathfrak{g}_0^\mathrm{red} \oplus \mathfrak{z},\quad 
\mathfrak{g}_0^\mathrm{red}:=\operatorname{Span}\{ e_0, h_0, h_1, f_0\},\  \mathfrak{z} :=  \operatorname{Span}\{ \widetilde{h}_2, h_3, \ldots, h_n\},
\end{align*}
where
\begin{align*}
&e_0 = e_{0,1},\quad
f_0 = e_{1,0},\quad 
\tilde{h}_2=h_2-h_0,\quad 
\text{if}\ \mathfrak{g} = \mathfrak{sl}(1|n+1),\\
&e_0 = e_{+,1} + e_{-1,-},\quad
f_0 = e_{1,+} - e_{-,-1},\quad
\tilde{h}_2=h_2-(1+\delta_{n,2})h_0,\quad 
\text{if}\ \mathfrak{g} = \mathfrak{osp}(2|2n).
\end{align*}
The Lie subalgebra $\mathfrak{z}$ is commutative and $\mathfrak{g}_0^{\mathrm{red}}$ is isomorphic to $\mathfrak{gl}(1|1)$ by
\begin{align*}
\begin{array}{cccc}
\iota:&\mathfrak{gl}(1|1)& \xrightarrow{\sim} &\mathfrak{g}_0^\mathrm{red}\\
&E_{1,1}, E_{2,2}, E_{1,2}, E_{2,1} &\mapsto&
-r(h_0 + h_1), rh_1, e_0, f_0, 
\end{array}
\end{align*}
where  $r=1$, (resp.\ 2), is the lacity of $\mathfrak{g}=\mathfrak{sl}(1|n+1)$, (resp.\ $\mathfrak{osp}(2|2n)$).
(See also Section\ \ref{sec:Wak-gl11}.) The restriction of $\tau_k$ on $\mathfrak{g}_0$ is
\begin{align*}
\tau_k(u|v) =
\begin{cases}
(k+h^\vee)(u|v), & u, v \in \mathfrak{z},\\
(k_1\kappa_1 + k_2\kappa_2)(\iota^{-1}(u)|\iota^{-1}(v)), & u, v \in \mathfrak{g}_0^\mathrm{red},\\
0, & u\in \mathfrak{z},\ v\in \mathfrak{g}_0^\mathrm{red},
\end{cases}
\end{align*}
where
\begin{align*}
k_1 = -r(k+h^\vee),\quad k_2 =r(k+h^\vee)+1,
\end{align*}
and $h^\vee=n$ is the dual Coxeter number of $\mathfrak{g}=\mathfrak{sl}(1|n+1)$, $\mathfrak{osp}(2|2n)$. (See also Section\ \ref{sec:Wak-gl11} for $\kappa_i$.)
Then the affine vertex superalgebra $V^{\tau_k}(\mathfrak{g}_0)$ decomposes as
\begin{align}\label{g1-decomposition super}
V^{\tau_k}(\mathfrak{g}_0) \simeq V^\kappa(\mathfrak{gl}(1|1)) \otimes \pi^{k+h^\vee}_\mathfrak{z},
\end{align}
where $\kappa = k_1\kappa_1 + k_2\kappa_2$ and $\pi^{k+h^\vee}_\mathfrak{z}$ is the Heisenberg vertex algebra associated with $\mathfrak{z}$ at level $k+h^\vee$, (Section\ \ref{Sect: Heisenberg}). Since
\begin{align}\label{eq; alpha super}
[\alpha_1] = \{\alpha_1, \alpha_0+\alpha_1\},\quad
[\alpha_i] = \{\alpha_i\},\quad
i = 2, \ldots, n,
\end{align}
the screening operators $\int Q_i(z)dz:=\int Q_{\alpha_i}(z)dz$ ($i=1,\ldots,n$) are 
\begin{align}\label{eq: i-screening operator for super}
\int Q_i(z)=\int S_{\alpha_i}(z) dz\colon V^{\tau_k}(\mathfrak{g}_0) \rightarrow M_{\alpha_i}.
\end{align} 

First, consider \eqref{eq: i-screening operator for super} in the case $i = 2, \ldots, n$. The orthogonal decomposition 
\begin{align*}
\mathfrak{g}_0 = \mathfrak{h}_i^\perp \oplus \mathfrak{h}_i,\quad
\mathfrak{h}_i = \C h_i,\quad
\mathfrak{h}_i^\perp = \operatorname{Span}\{ e_0, f_0, h \mid h \in \mathfrak{h}, \alpha_i(h) = 0 \}
\end{align*}
induces the decomposition of the affine vertex superalgebra $V^{\tau_k}(\mathfrak{g}_0)$
\begin{align*}
V^{\tau_k}(\mathfrak{g}_0) \simeq V^{\tau_k}(\mathfrak{h}_i^\perp) \otimes \pi^{k+h^\vee}_{\mathfrak{h}_i}.
\end{align*}
It follows from \eqref{g1-decomposition super} and \eqref{eq; alpha super} that we have an isomorphism of $V^{\tau_\kappa}(\mathfrak{g}_0)$-modules 
\begin{align*}
\begin{array}{ccc}
M_{\alpha_i}& \simeq& \widetilde{M}_{\alpha_i}:=V^{\tau_k}(\mathfrak{h}_i^\perp) \otimes \pi^{k+h^\vee}_{\mathfrak{h}_i, -\alpha_i}\\
v_{\alpha_i}&\mapsto & |0\rangle\otimes |0\rangle\otimes |-\alpha_i\rangle.
\end{array}
\end{align*}
Thus $S_{\alpha_i}(z)$ is identified with
\begin{align*}
\int S_{\alpha_i}(z)\ dz = \int :\mathrm{e}^{-\frac{1}{k+h^\vee} \int \alpha_i(z)}:\ dz\quad
\colon V^{\tau_k}(\mathfrak{h}_i^\perp) \otimes \pi^{k+h^\vee}_{\mathfrak{h}_i} \rightarrow \widetilde{M}_{\alpha_i},
\end{align*}
where $\alpha_i(z) = J^{h_i}(z)$, and, therefore,
\begin{align}\label{eq:sW-qscr-i}
\int Q_i(z)z=\int:\mathrm{e}^{-\frac{1}{k+h^\vee} \int \alpha_i(z)}:\ dz.
\end{align}
Next, consider \eqref{eq: i-screening operator for super} in the case $n=1$. By Proposition \ref{prop:Wakimoto-gl(1|1)}, we have a vertex superalgebra embedding
\begin{align*}
\rho_{\mathfrak{gl}(1|1)} \colon V^\kappa(\mathfrak{gl}(1|1)) \hookrightarrow M_{\mathfrak{gl}(1|1)} \otimes \pi^{\kappa-\kappa_2}_\chi,
\end{align*}
which is injective by Lemma \ref{lem:rho-inj} and gives an isomorphism 
\begin{align}\label{eq:gl11-scr}
\begin{split}
&V^\kappa(\mathfrak{gl}(1|1))\\
&\simeq \operatorname{Ker}\left(\int:b(z)\mathrm{e}^{\frac{1}{r(k+h^\vee)} \int (\chi_1+\chi_2)(z)}:\ dz\ : M_{\mathfrak{gl}(1|1)} \otimes \pi^{\kappa-\kappa_2}_\chi\rightarrow M_{\mathfrak{gl}(1|1)} \otimes \pi^{\kappa-\kappa_2}_{\chi,-(\chi_1+\chi_2)}\right).
\end{split}
\end{align}
for $k\neq-h^\vee$ by Proposition \ref{longexactsequence}.
Here $M_{\mathfrak{gl}(1|1)}$ is the $bc$-system vertex superalgebra and $\pi^{\kappa-\kappa_2}_\chi$ is the Heisenberg vertex algebra generated by even fields $\chi_1(z)$, $\chi_2(z)$ with OPEs \eqref{eq:chi-OPE}. 
Then it follows from  \eqref{g1-decomposition super} and the isomorphism of vertex algebras 
\begin{align*}
&\pi^{k+h^\vee}_\mathfrak{h} \xrightarrow{\sim} \pi^{\kappa-\kappa_2}_\chi \otimes \pi^{k+h^\vee}_\mathfrak{z}\\
&\alpha_0(z) \mapsto -\frac{1}{r} (\chi_1+\chi_2)(z),\
\alpha_1(z) \mapsto \frac{1}{r} \chi_2(z),\
\alpha_i(z) \mapsto J^{h_i}(z),\quad (i = 3, \ldots, n),\\
&\alpha_2(z) \mapsto 
\begin{cases}
J^{\widetilde{h}_2}(z)+(\chi_1+\chi_2)(z),& \mathrm{if}\ \mathfrak{g}=\mathfrak{sl}(1|n+1),\\
J^{\widetilde{h}_2}(z)+\frac{1+\delta_{n,2}}{2}(\chi_1+\chi_2)(z),&\mathrm{if}\ \mathfrak{g}=\mathfrak{osp}(2|2n),\\
\end{cases}
\end{align*}
that we have a vertex superalgebra embedding
\begin{align}\label{eq: g0-Wakimoto super}
\rho_{\mathfrak{g}_0} \colon V^{\tau_k}(\mathfrak{g}_0) \hookrightarrow M_{\mathfrak{gl}(1|1)} \otimes \pi^{k+h^\vee}_\mathfrak{h},
\end{align}
which gives an isomorphism for $k\neq-h^\vee$:
\begin{align}\label{eq:super-Vtau-Wakimoto}
\begin{split}
&V^{\tau_k}(\mathfrak{g}_0)\\
& \simeq \operatorname{Ker}\left( \int :b(z)\mathrm{e}^{-\frac{1}{k+h^\vee} \int \alpha_0(z)}:\ dz\quad
\colon M_{\mathfrak{gl}(1|1)} \otimes \pi^{k+h^\vee}_\mathfrak{h} \rightarrow M_{\mathfrak{gl}(1|1)} \otimes \pi^{k+h^\vee}_{\mathfrak{h}, -\alpha_0} \right).
\end{split}
\end{align}
Then it gives a $V^{\tau_k}(\mathfrak{g}_0)$-module structure on $\widetilde{M}_{\alpha_1}:=M_{\mathfrak{gl}(1|1)} \otimes \pi^{k+h^\vee}_{\mathfrak{h}, -\alpha_1}$. We have an isomorphism
\begin{align}\label{eq: isom for Wakimoto}
\widetilde{M}_{\alpha_1} \simeq \widehat{V}^{+,\kappa}_{\frac{3}{2}, -1}\otimes \pi_{\mathfrak{z}}^{k+h^\vee}
\end{align}
as $V^{\tau_k}(\mathfrak{g}_0)\simeq V^\kappa(\mathfrak{gl}(1|1))\otimes \pi_{\mathfrak{z}}^{k+h^\vee}$-module for $k\notin \mathbb{Q}$ by Lemma \ref{Induced modules at generic level} (1) and Proposition \ref{generic gl11-Wakimoto}.
Note that as a $V^{\tau_k}(\mathfrak{g}_0)$-module, $\widetilde{M}_{\alpha_1}$ is generated by a $(\mathfrak{g}_0[t] \oplus \C K)$-submodule
\begin{align*}
\widetilde{\C}^{[\alpha_1]} = \C |-\alpha_1\rangle \oplus \C c_{(-1)}|-\alpha_1\rangle.
\end{align*}
\begin{lemma}\label{lem:super-M-isom}
For $k\notin\mathbb{Q}$,
we have $M_{\alpha_1} \simeq \widetilde{M}_{\alpha_1}$ as $V^{\tau_k}(\mathfrak{g}_0)$-modules.
\begin{proof}
The assertion follows from \eqref{eq: isom for Wakimoto} and the $(\mathfrak{g}_0[t] \oplus \C K)$-module isomorphism
\begin{align*}
\C^{[\alpha_1]} \xrightarrow{\sim} \widetilde{\C}^{[\alpha_1]},\quad
v_{\alpha_1} \mapsto |-\alpha_1\rangle,\quad
v_{\alpha_0+\alpha_1} \mapsto -c_{(-1)}|-\alpha_1\rangle.
\end{align*}
\end{proof}
\end{lemma}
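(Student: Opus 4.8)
The plan is to follow the pattern of the even-case Lemma \ref{lem:M-isom}: first produce an isomorphism of the generating subspaces as $(\mathfrak{g}_0[t]\oplus\C K)$-modules, then induce it up to a surjection of $V^{\tau_k}(\mathfrak{g}_0)$-modules, and finally upgrade surjectivity to bijectivity. The essential difference from the even case is that $M_{\alpha_1}$ cannot be shown simple by a soft genericity argument; instead, injectivity will be extracted from the precise irreducibility criterion for induced $\widehat{\mathfrak{gl}}(1|1)_\kappa$-modules in Lemma \ref{Induced modules at generic level}, and it is exactly this that forces the hypothesis $k\notin\mathbb{Q}$.

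First I would verify that the stated linear map $\C^{[\alpha_1]}\xrightarrow{\sim}\widetilde{\C}^{[\alpha_1]}$, $v_{\alpha_1}\mapsto|-\alpha_1\rangle$, $v_{\alpha_0+\alpha_1}\mapsto-c_{(-1)}|-\alpha_1\rangle$, intertwines the two $(\mathfrak{g}_0[t]\oplus\C K)$-actions. On the source the action is the coadjoint formula $u\cdot v_\beta=\sum_{\gamma\in[\alpha_1]}c_{\gamma,u}^\beta v_\gamma$ for $u\in\mathfrak{g}_0$. On the target $\widetilde{\C}^{[\alpha_1]}$ sits in conformal degree $0$ of $\widetilde{M}_{\alpha_1}$, so every positive mode $u_{(n)}$ with $n\geq1$ annihilates it and only the zero modes $u_{(0)}$ act; these are computed from the free-field images of $\mathfrak{g}_0^{\mathrm{red}}\simeq\mathfrak{gl}(1|1)$ and $\mathfrak{z}$ in \eqref{eq:super-Vtau-Wakimoto} via $\iota$ and Proposition \ref{prop:Wakimoto-gl(1|1)}. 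Concretely one checks that the four generators $e_0,f_0,h_0,h_1$ (equivalently $E_{1,2},E_{2,1},E_{1,1},E_{2,2}$) and the Cartan directions of $\mathfrak{z}$ act by the same $2\times2$ matrices on both sides and that $K$ acts by the level; this is a finite, if tedious, calculation. It simultaneously records that $\C^{[\alpha_1]}$, via $\iota$ and its $\mathfrak{z}$-weight, is the degree-zero generating component $V^+_{3/2,-1}$ tensored with the vacuum character of $\mathfrak{z}$, consistently with \eqref{eq: isom for Wakimoto}.

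By the universality of the induced module $M_{\alpha_1}=U(\widehat{\mathfrak{g}}_0)\otimes_{U(\mathfrak{g}_0[t]\oplus\C K)}\C^{[\alpha_1]}$, this finite isomorphism extends to a $V^{\tau_k}(\mathfrak{g}_0)$-module homomorphism $\Phi\colon M_{\alpha_1}\to\widetilde{M}_{\alpha_1}$, which is surjective because $\widetilde{\C}^{[\alpha_1]}$ generates $\widetilde{M}_{\alpha_1}$. For injectivity I would invoke \eqref{eq: isom for Wakimoto}, which under the decomposition \eqref{g1-decomposition super} identifies $\widetilde{M}_{\alpha_1}\simeq\widehat{V}^{+,\kappa}_{3/2,-1}\otimes\pi^{k+h^\vee}_\mathfrak{z}$. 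Since $k_1=-r(k+h^\vee)$ and the second label is $e=-1$, the ratio $\frac{e}{k_1}=\frac{1}{r(k+h^\vee)}$ is irrational whenever $k\notin\mathbb{Q}$, so $\widehat{V}^{+,\kappa}_{3/2,-1}$ is irreducible by Lemma \ref{Induced modules at generic level}(1); as $\pi^{k+h^\vee}_\mathfrak{z}$ is non-degenerate, hence simple, for $k\notin\mathbb{Q}$, the module $\widetilde{M}_{\alpha_1}$ is an irreducible $V^{\tau_k}(\mathfrak{g}_0)$-module. The same data (the finite isomorphism together with \eqref{g1-decomposition super}) exhibit $M_{\alpha_1}$ as the induced module of the identical type, hence likewise irreducible for $k\notin\mathbb{Q}$. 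A nonzero homomorphism between irreducible modules is an isomorphism, so $\Phi$ is bijective and the lemma follows.

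The computational bulk is the zero-mode intertwining check of the second paragraph; the genuinely new point compared with Lemma \ref{lem:M-isom} is the injectivity argument, where simplicity is no longer automatic and must be routed through the $\widehat{\mathfrak{gl}}(1|1)_\kappa$-module theory of Lemma \ref{Induced modules at generic level}. I expect the only real subtlety to be the bookkeeping of the $\mathfrak{z}$-weight: one must confirm that the shift by $-\alpha_1$ lies entirely in the $\chi$-directions, so that the Heisenberg factor in \eqref{eq: isom for Wakimoto} is the vacuum module, which follows from the assignment $\alpha_1(z)\mapsto\frac{1}{r}\chi_2(z)$ used to build $\rho_{\mathfrak{g}_0}$ in \eqref{eq: g0-Wakimoto super}.
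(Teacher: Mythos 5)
Your proposal is correct and follows essentially the same route as the paper: the finite-dimensional $(\mathfrak{g}_0[t]\oplus\C K)$-module isomorphism plus universality of induction gives the surjection, and the identification \eqref{eq: isom for Wakimoto} of $\widetilde{M}_{\alpha_1}$ with $\widehat{V}^{+,\kappa}_{3/2,-1}\otimes\pi^{k+h^\vee}_{\mathfrak{z}}$ (resting on Lemma \ref{Induced modules at generic level}(1) and Proposition \ref{generic gl11-Wakimoto} at irrational level) supplies injectivity. Your explicit irreducibility argument for the last step is just an unpacking of what the paper compresses into the citation of \eqref{eq: isom for Wakimoto}, where the same irrationality of $e/k_1=1/(r(k+h^\vee))$ is the operative hypothesis.
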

Under the realization \eqref{eq:super-Vtau-Wakimoto}, Lemma \ref{lem:super-M-isom} implies that the intertwining operator $S_{\alpha_1}(z)$ is identified with
\begin{align*}
S_{\alpha_1}(z)=\ :\mathrm{e}^{-\frac{1}{k+h^\vee} \int \alpha_1(z)}:\quad:
\mathrm{Im}(\rho_{\mathfrak{g}_0})\rightarrow \widetilde{M}_{\alpha_1}((z)),
\end{align*}
and thus
\begin{align}\label{eq:sW-scr-1}
\int Q_1(z) dz=\int:\mathrm{e}^{-\frac{1}{k+h^\vee} \int \alpha_1(z)}:dz.
\end{align}
We also note that $S_{\alpha_1+\alpha_2}(z) = - :c(z)\mathrm{e}^{-\frac{1}{k+h^\vee} \int \alpha_1(z)}:$.

By \eqref{eq:W-screening}, \eqref{eq:sW-qscr-i} and \eqref{eq:sW-scr-1}, we conclude
\begin{align*}
\mathcal{W}^k (\mathfrak{g})
&\simeq \bigcap_{i=1}^n \operatorname{Ker}\left( \int Q_i(z)\ dz \colon V^{\tau_k}(\mathfrak{g}_0) \rightarrow M_{\alpha_i} \right)\\
&\simeq \bigcap_{i=1}^n \operatorname{Ker}\left( \int :\mathrm{e}^{-\frac{1}{k+h^\vee} \int \alpha_i(z)}:\ dz\quad
\colon \mathrm{Im}(\rho_{\mathfrak{g}_0}) \rightarrow \widetilde{M}_{\alpha_i} \right).
\end{align*}
The composition $\Psi_1$ of \eqref{eq: Miura-map} and \eqref{eq: g0-Wakimoto super} gives an vertex superalgebra embedding
of the principal super $\mathcal{W}$-algebra 
$\mathcal{W}^k (\mathfrak{g}) = \mathcal{W}^k (\mathfrak{g},f_\mathrm{prin} ;\Gamma_\mathrm{prin})$
\begin{align}\label{Psi_1}
\Psi_1: \mathcal{W}^k (\mathfrak{g})\hookrightarrow M_{\mathfrak{gl}(1|1)} \otimes \pi^{k+h^\vee}_\mathfrak{h}.
\end{align}
Now \eqref{eq:super-Vtau-Wakimoto} implies the following realization of the image $\mathrm{Im}(\Psi_1)$.
\begin{theorem}\label{thm:super-W}
If $k$ is generic, then we have
\begin{align*}
\mathrm{Im}(\Psi_1)= \bigcap_{i = 0}^n \operatorname{Ker} \int Q_i(z) dz
\end{align*}
for $\mathfrak{g} = \mathfrak{sl}(1|n+1)$, $\mathfrak{osp}(2|2n)$, where
\begin{align*}
Q_0(z) = :b(z)\mathrm{e}^{-\frac{1}{k+h^\vee} \int \alpha_0(z)}:,\quad
Q_i(z) = :\mathrm{e}^{-\frac{1}{k+h^\vee} \int \alpha_i(z)}:\ 
(i = 1, \ldots, n).
\end{align*}
\end{theorem}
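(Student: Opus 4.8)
The plan is to assemble the identification from the two screening characterizations already established, using that $\Psi_1$ is by definition the composite of the Miura map \eqref{eq: Miura-map} with the Wakimoto embedding $\rho_{\mathfrak{g}_0}$ of \eqref{eq: g0-Wakimoto super}. First I would record the two facts that do the work. On the one hand, \eqref{eq:W-screening} together with the explicit screening identifications \eqref{eq:sW-qscr-i} and \eqref{eq:sW-scr-1} shows that, inside $V^{\tau_k}(\mathfrak{g}_0)$, the Miura image of $\mathcal{W}^k(\mathfrak{g})$ is exactly $\bigcap_{i=1}^n \Ker\int Q_i(z)\,dz$, where each $Q_i$ ($i\geq 1$) is the purely exponential operator $:\mathrm{e}^{-\frac{1}{k+h^\vee}\int\alpha_i(z)}:$. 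On the other hand, \eqref{eq:super-Vtau-Wakimoto} identifies $\mathrm{Im}(\rho_{\mathfrak{g}_0})$ with $\Ker\int Q_0(z)\,dz$ inside $M_{\mathfrak{gl}(1|1)}\otimes\pi^{k+h^\vee}_{\mathfrak{h}}$, where $Q_0(z)=:b(z)\mathrm{e}^{-\frac{1}{k+h^\vee}\int\alpha_0(z)}:$.

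The key step is to show that applying the injective map $\rho_{\mathfrak{g}_0}$ converts the intersection over $i\geq 1$ taken in $V^{\tau_k}(\mathfrak{g}_0)$ into the intersection over the same indices taken in the larger free-field algebra. For this I would verify, for each $i\geq 1$, the commutativity of the square formed by $\rho_{\mathfrak{g}_0}$, the module map $M_{\alpha_i}\xrightarrow{\sim}\widetilde{M}_{\alpha_i}$, and the two incarnations of the $i$-th screening: one as the intertwiner $\int S_{\alpha_i}(z)\,dz\colon V^{\tau_k}(\mathfrak{g}_0)\to M_{\alpha_i}$, and one as the lattice operator on $M_{\mathfrak{gl}(1|1)}\otimes\pi^{k+h^\vee}_{\mathfrak{h}}$. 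This commutativity is precisely what \eqref{eq:sW-qscr-i} records for $i=2,\dots,n$ and, via the module isomorphism of Lemma \ref{lem:super-M-isom}, what \eqref{eq:sW-scr-1} records for $i=1$. Since the target identifications $M_{\alpha_i}\to\widetilde{M}_{\alpha_i}$ are injective, for any $v\in V^{\tau_k}(\mathfrak{g}_0)$ one gets $\rho_{\mathfrak{g}_0}(v)\in\Ker\int Q_i$ if and only if $v\in\Ker\int Q_i$, and hence
$$\rho_{\mathfrak{g}_0}\Big(\bigcap_{i=1}^n\Ker\int Q_i\Big)=\mathrm{Im}(\rho_{\mathfrak{g}_0})\cap\bigcap_{i=1}^n\Ker\int Q_i.$$

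It then only remains to combine the pieces: using \eqref{eq:super-Vtau-Wakimoto} to rewrite $\mathrm{Im}(\rho_{\mathfrak{g}_0})$ as $\Ker\int Q_0$, the right-hand side above becomes $\bigcap_{i=0}^n\Ker\int Q_i$, while the left-hand side is $\rho_{\mathfrak{g}_0}(\mathcal{W}^k(\mathfrak{g}))=\mathrm{Im}(\Psi_1)$ by the definition of $\Psi_1$ as the composite \eqref{Psi_1}. Throughout, the word \emph{generic} should be read so that all inputs hold simultaneously: the screening description \eqref{eq:W-screening}, the Wakimoto resolution behind \eqref{eq:super-Vtau-Wakimoto} (valid for $k\neq -h^\vee$ by Proposition \ref{longexactsequence}), and the module identifications of Lemma \ref{lem:super-M-isom} (valid for $k\notin\mathbb{Q}$).

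I expect the main obstacle to be the compatibility in the key step, namely that the exterior screenings $Q_i$ ($i\geq 1$), which a priori are defined on all of $M_{\mathfrak{gl}(1|1)}\otimes\pi^{k+h^\vee}_{\mathfrak{h}}$, restrict along $\rho_{\mathfrak{g}_0}$ to the intrinsic $V^{\tau_k}(\mathfrak{g}_0)$-screenings, with kernels matching under the injective target identifications. The genuinely delicate case is $i=1$, where the identification rests on the $\mathfrak{gl}(1|1)$-Wakimoto theory (Proposition \ref{generic gl11-Wakimoto} and Lemma \ref{lem:super-M-isom}); once that square commutes, the remaining manipulation of kernels is purely formal.
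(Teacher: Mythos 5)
Your proposal is correct and follows essentially the same route as the paper: the theorem is obtained there exactly by combining the screening description \eqref{eq:W-screening} of the Miura image with the identifications \eqref{eq:sW-qscr-i}, \eqref{eq:sW-scr-1} (via Lemma \ref{lem:super-M-isom}) and the realization \eqref{eq:super-Vtau-Wakimoto} of $\mathrm{Im}(\rho_{\mathfrak{g}_0})$ as $\Ker\int Q_0$. Your explicit attention to the commuting squares and to which genericity conditions are needed where is a faithful (indeed slightly more careful) rendering of the paper's argument.
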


\section{Dualities in coset vertex algebras}\label{sec:newduality}

\subsection{Coset vertex algebras}
Given a vertex superalgebra $V$ and a subalgebra $W\subset V$, the subspace
\begin{align*}
\mathrm{Com}(W,V):=\{a\in V\mid \forall b\in W,\ Y(b,z)Y(a,w)\sim 0\},
\end{align*} 
is a vertex subalgebra, called the coset vertex (super)algebra of the pair $(V,W)$.

For $\mathfrak{g}=\mathfrak{sl}_{n+1},\ \mathfrak{so}_{2n+1}$, define a field $H_1(z)$ on $M_{\mathfrak{sl}_2} \otimes \pi^{k+h^\vee}_\mathfrak{h}$ by
\begin{align}\label{eq: H_1}
\begin{split}
&H_1(z) = \omega^\vee_1(z)-:\beta(z)\gamma(z):,\\
&\omega^\vee_1=
\begin{cases}
\displaystyle \frac{1}{n+1}\sum_{i=1}^n (n-i+1)\alpha_i,& \mathrm{if}\ \mathfrak{g} = \mathfrak{sl}_{n+1},\\
\displaystyle \sum_{i=1}^n \alpha_i, & \mathrm{if}\ \mathfrak{g} = \mathfrak{so}_{2n+1}.
\end{cases}
\end{split}
\end{align}
Note that $\omega^\vee_1$ represents the first fundamental coweight of $\mathfrak{g}$. By direct computations, one can show that $H_1(z)$ lies in the kernels of the screening operators $\int Q_i(z) dz$, ($1\leq i\leq n$), and thus it defines a field on $\mathcal{W}^k (\mathfrak{g}, f_\mathrm{sub})$ by Theorem \ref{thm:sub-W}. In fact, $H_1(z)$ corresponds to the field $J^{\{ \omega_1^\vee\}}(z)$ on $\mathcal{W}^k (\mathfrak{g}, f_\mathrm{sub})$ in \cite[Theorem 2.1 (a)]{KW} via the Miura map. We shall write $H_1(z)$ for $J^{\{ \omega_1^\vee\}}(z)$ by abuse of notation.
Let $\pi_{H_1}$ be the Heisenberg vertex subalgebra of $\mathcal{W}^k (\mathfrak{g}, f_\mathrm{sub})$ generated by $H_1(z)$ and consider the coset vertex algebra
$\mathrm{Com}\left( \pi_{H_1}, \mathcal{W}^k (\mathfrak{g}, f_\mathrm{sub}) \right)$.
We will describe it in terms of screening operators for generic $k$.

Let $L_1=\Z x\oplus \Z y$ a $\Z$-lattice equipped with a bilinear form $(\cdot|\cdot)$ given by $(ax+by|cx+dy) = ac-bd$, $\pi_{L_1}$ the Heisenberg vertex algebra associate with the commutative Lie algebra $L_1 \otimes_\Z \C$, (Section\ \ref{Sect: Heisenberg}), and 
$$V_{L_1}:=\bigoplus_{(m,n)\in\Z^2}\pi_{L_1,mx+ny}$$
the lattice vertex superalgebra associated with $L_1$ and the vertex subalgera
\begin{align*}
V_{x+y} := \bigoplus_{n \in \Z} \pi_{L_1, n(x+y)}.
\end{align*}
The Friedan-Martinec-Shenker bosonization \cite[Chapter 7]{F2} gives a vertex algebra embedding
\begin{align*}
\begin{split}
\Upsilon_2 \colon &M_{\mathfrak{sl}_2} \hookrightarrow V_{x+y}\\
&\beta(z) \mapsto :\mathrm{e}^{\int (x+y)(z)}:,\\
&\gamma(z) \mapsto -:x(z)\mathrm{e}^{-\int (x+y)(z)}:,
\end{split}
\end{align*}
whose image is equal to the kernel of the screening operator $\int :\mathrm{e}^{\int x(z)}:dz$
\begin{align*}
\operatorname{Im}(\Upsilon_2) = \Ker \left(\int :\mathrm{e}^{\int x(z)}:\ dz\quad: V_{x+y}\rightarrow \bigoplus_{n \in \Z} \pi_{L_1, (n+1)x+ny}\right).
\end{align*}
By composing it with \eqref{eq: Upsilon_1}, we obtain a vertex algebra embedding
\begin{align}\label{eq:sub-W-free}
\Upsilon:=\Upsilon_2 \circ \Upsilon_1 \colon \mathcal{W}^k (\mathfrak{g}, f_\mathrm{sub}) \hookrightarrow V_{x+y} \otimes \pi^{k+h^\vee}_\mathfrak{h},
\end{align}
whose image for generic $k$ coincides with
\begin{align}\label{eq:sub-W-image}
\begin{split}
\operatorname{Im}(\Upsilon)
= &\Ker \int :\mathrm{e}^{\int x(z)}:\ dz\ \cap\ \Ker\int:\mathrm{e}^{-\frac{1}{k+h^\vee} \int (\alpha_1 - (k+h^\vee)(x+y))(z)}:dz\\
& \cap\ \bigcap_{i=2}^n \Ker\int :\mathrm{e}^{-\frac{1}{k+h^\vee} \int \alpha_i(z)}:dz.
\end{split}
\end{align}
Let $\pi_{\widetilde{\alpha}}^{k+h^\vee}\subset \pi_{L_1} \otimes \pi^{k+h^\vee}_\mathfrak{h}$ be the Heisenberg vertex subalgebra generated by 
\begin{equation*}
\begin{split}
&\widetilde{\alpha}_0(z) = x(z),\quad \widetilde{\alpha}_1(z) = (\alpha_1 - (k+h^\vee)(x+y))(z),\\
&\widetilde{\alpha}_i(z) = \alpha_i(z),\quad (i=2, \ldots, n-1),\\
&\widetilde{\alpha}_n(z) = r \alpha_n(z),
\end{split}
\end{equation*} 
where $r=1$, (resp.\ 2), is the lacity of $\mathfrak{g}=\mathfrak{sl}_{n+1}$, (resp.\ $\mathfrak{so}_{2n+1}$).
Since $\Upsilon_2$ induces an isomorphism
\begin{align*}
\begin{array}{ccl}
\pi_{H_1} \otimes \pi_{\widetilde{\alpha}}^{k+h^\vee}& \xrightarrow{\sim} &\pi_{L_1} \otimes \pi^{k+h^\vee}_\mathfrak{h}\\
1\otimes \widetilde{\alpha}_i(z)&\mapsto &1\otimes \widetilde{\alpha}_i(z),\quad (i=0,\ldots,n)\\
H_1(z)\otimes 1 &\mapsto &\omega^\vee_1(z)-y(z),
\end{array}
\end{align*}
we have $\mathrm{Com}\left( \pi_{H_1}, V_{x+y} \otimes \pi^{k+h^\vee}_\mathfrak{h} \right) = \pi_{\widetilde{\alpha}}^{k+h^\vee}$.
Therefore, $\Upsilon$ restricts to 
\begin{align}\label{eq: Upsilon}
\Upsilon \colon \mathrm{Com}\left( \pi_{H_1}, \mathcal{W}^k (\mathfrak{g}, f_\mathrm{sub}) \right) \hookrightarrow \pi_{\widetilde{\alpha}}^{k+h^\vee},
\end{align}
and thus we obtain the following proposition.
\begin{proposition}\label{prop:sub-W-coset}
For generic $k$, 
\begin{align*}
\mathrm{Com}\left( \pi_{H_1}, \mathcal{W}^k (\mathfrak{g}, f_\mathrm{sub}) \right) \simeq
&\Ker_{\pi_{\widetilde{\alpha}}^{k+h^\vee}} \int :\mathrm{e}^{\int \widetilde{\alpha}_0(z)}:\ dz\ \cap\ \bigcap_{i=1}^{n-1} \Ker_{\pi_{\widetilde{\alpha}}^{k+h^\vee}}\int :\mathrm{e}^{-\frac{1}{k+h^\vee} \int \widetilde{\alpha}_i(z)}:dz\\
&\cap\ \Ker_{\pi_{\widetilde{\alpha}}^{k+h^\vee}}\int:\mathrm{e}^{-\frac{1}{r(k+h^\vee)} \int \widetilde{\alpha}_n(z)}:dz.
\end{align*}
\end{proposition}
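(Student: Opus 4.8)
The plan is to identify the coset with $\operatorname{Im}(\Upsilon)\cap\pi_{\widetilde{\alpha}}^{k+h^\vee}$ inside the ambient algebra $V_{x+y}\otimes\pi^{k+h^\vee}_\mathfrak{h}$, and then to commute the intersection past the joint-kernel description of $\operatorname{Im}(\Upsilon)$. Since $\pi_{H_1}\subset\mathcal{W}^k(\mathfrak{g},f_\mathrm{sub})$ and the vanishing of an OPE is detected in the ambient algebra, the coset satisfies
\[
\mathrm{Com}\bigl(\pi_{H_1},\mathcal{W}^k(\mathfrak{g},f_\mathrm{sub})\bigr)=\mathcal{W}^k(\mathfrak{g},f_\mathrm{sub})\cap\mathrm{Com}\bigl(\pi_{H_1},V_{x+y}\otimes\pi^{k+h^\vee}_\mathfrak{h}\bigr)=\operatorname{Im}(\Upsilon)\cap\pi_{\widetilde{\alpha}}^{k+h^\vee},
\]
where the last equality uses the commutant computation $\mathrm{Com}(\pi_{H_1},V_{x+y}\otimes\pi^{k+h^\vee}_\mathfrak{h})=\pi_{\widetilde{\alpha}}^{k+h^\vee}$ recorded just before \eqref{eq: Upsilon}. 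This is exactly the image of the restricted embedding \eqref{eq: Upsilon}, so it remains only to compute that image explicitly.

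Next I would substitute the screening description \eqref{eq:sub-W-image} of $\operatorname{Im}(\Upsilon)$ and distribute the intersection over the joint kernel, so that
\[
\operatorname{Im}(\Upsilon)\cap\pi_{\widetilde{\alpha}}^{k+h^\vee}=\bigcap_{j=0}^{n}\Bigl(\Ker_{V_{x+y}\otimes\pi^{k+h^\vee}_\mathfrak{h}}S_j\ \cap\ \pi_{\widetilde{\alpha}}^{k+h^\vee}\Bigr),
\]
where $S_0,\dots,S_n$ are the screenings in \eqref{eq:sub-W-image}. The statement then follows once I establish, for each $j$, the single-screening identity $\Ker_{V_{x+y}\otimes\pi^{k+h^\vee}_\mathfrak{h}}S_j\cap\pi_{\widetilde{\alpha}}^{k+h^\vee}=\Ker_{\pi_{\widetilde{\alpha}}^{k+h^\vee}}S_j$ and rewrite each screening in the coordinates $\widetilde{\alpha}_0,\dots,\widetilde{\alpha}_n$; here one checks directly that $-\tfrac{1}{r(k+h^\vee)}\widetilde{\alpha}_n=-\tfrac{1}{k+h^\vee}\alpha_n$ and $\widetilde{\alpha}_1=\alpha_1-(k+h^\vee)(x+y)$, so the families in \eqref{eq:sub-W-image} match the three displayed in the proposition.

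The single-screening identity is the heart of the argument and the step I expect to demand the most care. Each screening field $:\mathrm{e}^{c_j\int\widetilde{\alpha}_j(z)}:$, with $c_0=1$, $c_i=-\tfrac{1}{k+h^\vee}$ for $1\le i\le n-1$ and $c_n=-\tfrac{1}{r(k+h^\vee)}$, depends only on the generator $\widetilde{\alpha}_j$ of $\pi_{\widetilde{\alpha}}^{k+h^\vee}$. Under the tensor decomposition $\pi_{H_1}\otimes\pi_{\widetilde{\alpha}}^{k+h^\vee}\simeq\pi_{L_1}\otimes\pi^{k+h^\vee}_\mathfrak{h}$ the fields $H_1$ and $\widetilde{\alpha}_j$ lie in orthogonal tensor factors, so $(H_1|\widetilde{\alpha}_j)=0$ and $H_1(w)$ has trivial OPE with the screening field; hence $S_j$ commutes with $\pi_{H_1}$ and preserves $H_1$-charge. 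Its restriction to $\pi_{\widetilde{\alpha}}^{k+h^\vee}$ is therefore a well-defined intertwining operator valued in a highest-weight $\pi_{\widetilde{\alpha}}^{k+h^\vee}$-module that embeds $\pi_{H_1}$-equivariantly into the ambient target, so $S_j a$ agrees whether computed in the ambient algebra or in this submodule for $a\in\pi_{\widetilde{\alpha}}^{k+h^\vee}$, yielding the identity of kernels. The one point needing extra attention is the Friedan--Martinec--Shenker screening $S_0=\int:\mathrm{e}^{\int x(z)}:dz$, whose shift operator $T_x$ pushes $\pi_{L_1,0}$ outside $V_{x+y}$ into the module $\bigoplus_{n}\pi_{L_1,(n+1)x+ny}$; because $(H_1|x)=0$, its restriction to $\pi_{\widetilde{\alpha}}^{k+h^\vee}$ still lands in the $H_1$-charge-zero part of that module, so the single-screening identity persists for $j=0$ as well and the proof closes.
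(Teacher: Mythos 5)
Your proposal is correct and follows essentially the same route as the paper: the paper also identifies the coset with $\operatorname{Im}(\Upsilon)\cap\mathrm{Com}\bigl(\pi_{H_1},V_{x+y}\otimes\pi^{k+h^\vee}_{\mathfrak{h}}\bigr)=\operatorname{Im}(\Upsilon)\cap\pi_{\widetilde{\alpha}}^{k+h^\vee}$ via the isomorphism $\pi_{H_1}\otimes\pi_{\widetilde{\alpha}}^{k+h^\vee}\simeq\pi_{L_1}\otimes\pi^{k+h^\vee}_{\mathfrak{h}}$ and then reads off the screenings in the $\widetilde{\alpha}$-coordinates from \eqref{eq:sub-W-image}. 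The single-screening restriction identity that you isolate as the delicate step is left implicit in the paper (it is exactly why ``$\Upsilon$ restricts to \eqref{eq: Upsilon}'' yields the proposition), and your justification of it via orthogonality of $H_1$ with each $\widetilde{\alpha}_j$ is sound.
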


Similarly, for $\mathfrak{g}=\mathfrak{sl}(1|n+1),\ \mathfrak{osp}(2|2n)$, consider the field $H_2(z)$ on $M_{\mathfrak{gl}(1|1)} \otimes \pi^{k+h^\vee}_\mathfrak{h}$ defined by
\begin{align}\label{eq: H_2}
\begin{split}
&H_2(z) =\omega^\vee_0(z)+ :b(z)c(z):,\\
&\omega^\vee_0=
\begin{cases}
\displaystyle-\frac{1}{n}\sum_{i=0}^n (n-i+1) \alpha_i,& \mathrm{if}\ \mathfrak{g} = \mathfrak{sl}(1|n+1),\\
\displaystyle -2\sum_{i=0}^{n-1} \alpha_i - \alpha_n, & \mathrm{if}\ \mathfrak{g} = \mathfrak{osp}(2|2n).
\end{cases}
\end{split}
\end{align}
Note that $\omega^\vee_0$ represents the $0$-th fundamental coweight of $\mathfrak{g}$.
By direct computations, one can show that $H_2(z)$ lies in the kernels of the screening operators $\int Q_i(z) dz$, ($0\leq i\leq n$), and thus $H_2(z)$ defines a field on $\mathcal{W}^k (\mathfrak{g})$ by Theorem \ref{thm:super-W}. In fact, $H_2(z)$ corresponds to the field $J^{\{ \omega_0^\vee\}}(z)$ on $\mathcal{W}^k (\mathfrak{g})$ in \cite[Theorem 2.1 (a)]{KW} via the Miura map. We shall write $H_2(z)$ for $J^{\{ \omega_0^\vee\}}(z)$ by abuse of notation.
Let $\pi_{H_2}$ be the Heisenberg vertex subalgebra of $\mathcal{W}^k (\mathfrak{g})$ generated by $H_2(z)$ and consider the coset vertex algebra
$\mathrm{Com}\left( \pi_{H_2}, \mathcal{W}^k (\mathfrak{g}) \right)$.
We will describe it in terms of screening operators for generic $k$.

Let $\Z=\Z\phi$ a $\Z$-lattice equipped with a bilinear form $(m\phi|n\phi)=mn$, $\pi_{\Z}$ the Heisenberg vertex algebra associate with the commutative Lie algebra $\Z\otimes_\Z\C$, 
(Section\ \ref{Sect: Heisenberg}), and 
$$V_{\Z}:=\bigoplus_{m\in\Z}\pi_{\Z,m\phi}$$
the lattice vertex superalgebra associated with $\Z$. 
By the boson-fermion correspondence, e.g., \cite[Chapter 5]{FBZ}, we have an isomorphism 
\begin{align*}
\begin{array}{ccc}
\Psi_2 \colon M_{\mathfrak{gl}(1|1)}& \xrightarrow{\sim} &V_\Z\\
b(z) &\mapsto &:\mathrm{e}^{\int \phi(z)}:,\\
c(z) &\mapsto &:\mathrm{e}^{\int -\phi(z)}:.
\end{array}
\end{align*}
Composing it with \eqref{Psi_1}, we obtain a vertex algebra embedding 
\begin{align}\label{eq:super-W-free}
\Psi:=\Psi_2 \circ \Psi_1 \colon  \mathcal{W}^k (\mathfrak{g}) \hookrightarrow V_\Z \otimes \pi^{k+h^\vee}_\mathfrak{h},
\end{align}
whose image for generic $k$ coincides with
\begin{align}\label{eq:super-W-image}
\operatorname{Im}(\Psi) = \Ker\int:\mathrm{e}^{-\frac{1}{k+h^\vee} \int (\alpha_0-(k+h^\vee)\phi)(z)}:dz\ \cap\ \bigcap_{i=1}^n \Ker\int :\mathrm{e}^{-\frac{1}{k+h^\vee} \int \alpha_i(z)}:dz.
\end{align}
Let $\pi_{\widetilde{\beta}}^{k+h^\vee}\subset V_\Z \otimes \pi^{k+h^\vee}_\mathfrak{h}$ be the Heisenberg vertex subalgebra generated by 
\begin{equation*}
\begin{split}
&\widetilde{\beta}_0(z) = -\frac{1}{k+h^\vee}\alpha_0(z)+\phi(z),\\
&\widetilde{\beta}_i(z) = -\frac{1}{k+h^\vee}\alpha_i(z),\quad (i=1,\ldots, n).
\end{split}
\end{equation*} 
Since $\Psi_2$ induces an isomorphism
\begin{align*}
\begin{array}{ccl}
\pi_{H_2} \otimes \pi_{\widetilde{\beta}}^{k+h^\vee}& \xrightarrow{\sim} &\pi_{\Z} \otimes \pi^{k+h^\vee}_\mathfrak{h}\\
1\otimes \widetilde{\beta}_i(z)&\mapsto &\widetilde{\beta}_i(z),\quad (i=0,\ldots, n)\\
H_2(z)\otimes 1 &\mapsto &\omega^\vee_0(z)+\phi(z),
\end{array}
\end{align*}
we have $\mathrm{Com}\left( \pi_{H_2}, V_\Z \otimes \pi^{k+h^\vee}_\mathfrak{h} \right) = \pi_{\widetilde{\beta}}^{k+h^\vee}$. Therefore, $\Psi$ restricts to 
\begin{align}\label{eq: Psi}
\Psi \colon \mathrm{Com}\left( \pi_{H_2}, \mathcal{W}^k (\mathfrak{g}) \right) \hookrightarrow \pi_{\widetilde{\beta}}^{k+h^\vee},
\end{align}
and thus we obtain the following proposition.
\begin{proposition}\label{prop:super-W-coset}
For generic $k$, we have an isomorphism
\begin{align*}
\mathrm{Com}\left( \pi_{H_2}, \mathcal{W}^k (\mathfrak{g}) \right) \simeq \bigcap_{i=0}^n \Ker_{\pi_{\widetilde{\beta}}^{k+h^\vee}}\int :\mathrm{e}^{\int \widetilde{\beta}_i(z)}:dz
\end{align*}
where $\mathfrak{g} = \mathfrak{sl}(1|n+1)$ or $ \mathfrak{osp}(2|2n)$.
\end{proposition}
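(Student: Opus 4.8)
The plan is to combine the screening description of $\operatorname{Im}(\Psi)$ recorded in \eqref{eq:super-W-image} with the identification $\mathrm{Com}\left(\pi_{H_2}, V_\Z\otimes\pi^{k+h^\vee}_\mathfrak{h}\right)=\pi_{\widetilde{\beta}}^{k+h^\vee}$ established just above \eqref{eq: Psi}. First I would record the elementary commutant identity. We have the chain $\pi_{H_2}\subset\mathcal{W}^k(\mathfrak{g})\simeq\operatorname{Im}(\Psi)\subset V_\Z\otimes\pi^{k+h^\vee}_\mathfrak{h}$ (with $H_2\mapsto\omega_0^\vee+\phi$ under $\Psi$), and the operator product expansions defining a commutant are computed in the ambient algebra. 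Hence an element of $\mathcal{W}^k(\mathfrak{g})$ commutes with $\pi_{H_2}$ if and only if it lies in $\mathrm{Com}\left(\pi_{H_2}, V_\Z\otimes\pi^{k+h^\vee}_\mathfrak{h}\right)=\pi_{\widetilde{\beta}}^{k+h^\vee}$, so that
\[
\mathrm{Com}\left(\pi_{H_2}, \mathcal{W}^k(\mathfrak{g})\right)=\operatorname{Im}(\Psi)\cap\pi_{\widetilde{\beta}}^{k+h^\vee}.
\]

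Next I would rewrite the screening exponents appearing in \eqref{eq:super-W-image} in terms of the generators $\widetilde{\beta}_i$. A direct comparison gives $-\frac{1}{k+h^\vee}\left(\alpha_0-(k+h^\vee)\phi\right)=\widetilde{\beta}_0$ and $-\frac{1}{k+h^\vee}\alpha_i=\widetilde{\beta}_i$ for $1\leq i\leq n$, so that for generic $k$ the formula \eqref{eq:super-W-image} becomes
\[
\operatorname{Im}(\Psi)=\bigcap_{i=0}^n\Ker\int:\mathrm{e}^{\int\widetilde{\beta}_i(z)}:\ dz,
\]
where the kernels are taken on all of $V_\Z\otimes\pi^{k+h^\vee}_\mathfrak{h}$.

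The step requiring care is to compare these kernels with their intrinsic counterparts on $\pi_{\widetilde{\beta}}^{k+h^\vee}$. Since the map $\pi_{H_2}\otimes\pi_{\widetilde{\beta}}^{k+h^\vee}\xrightarrow{\sim}\pi_\Z\otimes\pi^{k+h^\vee}_\mathfrak{h}$ is an isomorphism of Heisenberg vertex algebras onto a \emph{tensor product}, the generator $H_2\mapsto\omega_0^\vee+\phi$ is orthogonal to every $\widetilde{\beta}_i$. As each $:\mathrm{e}^{\int\widetilde{\beta}_i(z)}:$ is built solely from the $\widetilde{\beta}$-modes, it therefore commutes with the whole Heisenberg field $H_2$ and preserves the decomposition of $V_\Z\otimes\pi^{k+h^\vee}_\mathfrak{h}$ into $\pi_{H_2}$-Fock sectors. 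Writing each summand $\pi_{\Z,m\phi}\otimes\pi^{k+h^\vee}_\mathfrak{h}$ as a tensor product of a $\pi_{H_2}$-Fock module and a $\pi_{\widetilde{\beta}}^{k+h^\vee}$-Fock module, the operator $\int:\mathrm{e}^{\int\widetilde{\beta}_i(z)}:\ dz$ acts as $\mathrm{id}\otimes(\text{the intrinsic screening on }\pi_{\widetilde{\beta}}^{k+h^\vee})$. Under the embedding $\pi_{\widetilde{\beta}}^{k+h^\vee}\hookrightarrow V_\Z\otimes\pi^{k+h^\vee}_\mathfrak{h}$, $a\mapsto|0\rangle\otimes a$, this identifies the restriction of each screening operator with the one defined intrinsically on $\pi_{\widetilde{\beta}}^{k+h^\vee}$, whence
\[
\left(\Ker_{V_\Z\otimes\pi^{k+h^\vee}_\mathfrak{h}}\int:\mathrm{e}^{\int\widetilde{\beta}_i(z)}:\ dz\right)\cap\pi_{\widetilde{\beta}}^{k+h^\vee}=\Ker_{\pi_{\widetilde{\beta}}^{k+h^\vee}}\int:\mathrm{e}^{\int\widetilde{\beta}_i(z)}:\ dz.
\]
Intersecting over $i=0,\ldots,n$ and feeding this into the two previous steps gives the assertion. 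The main obstacle is exactly this last compatibility: one must ensure that passing from the kernels on $V_\Z\otimes\pi^{k+h^\vee}_\mathfrak{h}$ to the intrinsic kernels on $\pi_{\widetilde{\beta}}^{k+h^\vee}$ neither loses nor creates solutions, which is where the orthogonality $(H_2\mid\widetilde{\beta}_i)=0$ and the resulting factorization $\mathrm{id}\otimes(\text{screening})$ are essential.
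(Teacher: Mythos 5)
Your argument is correct and follows essentially the same route as the paper: the paper likewise derives the proposition by restricting $\Psi$ to the commutant via $\mathrm{Com}\left( \pi_{H_2}, V_\Z \otimes \pi^{k+h^\vee}_\mathfrak{h} \right) = \pi_{\widetilde{\beta}}^{k+h^\vee}$ and intersecting the screening description \eqref{eq:super-W-image} of $\operatorname{Im}(\Psi)$ with $\pi_{\widetilde{\beta}}^{k+h^\vee}$. The compatibility of the ambient kernels with the intrinsic ones, which you justify via the orthogonality $(H_2\mid\widetilde{\beta}_i)=0$ and the resulting factorization of the screenings, is left implicit in the paper, so your write-up simply makes that step explicit.
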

\subsection{Feigin-Frenkel type duality}\label{Duality}
Let $(\mathfrak{g}_1,\mathfrak{g}_2) = (\mathfrak{sl}_{n+1},\mathfrak{sl}(1|n+1))$ or $(\mathfrak{so}_{2n+1}, \mathfrak{osp}(2|2n))$. Denote by $r$ the lacity of $\mathfrak{g}_1$, (which is equal to that of $\mathfrak{g}_2$), and by $h^\vee_i$ the dual Coxeter number of $\mathfrak{g}_i$. Then 
\begin{align*}
(r,h^\vee_1,h^\vee_2)=
\begin{cases}
(1,n+1,n),& \mathrm{if}\ (\mathfrak{g}_1,\mathfrak{g}_2)=(\mathfrak{sl}_{n+1}, \mathfrak{sl}(1|n+1)),\\
(2,2n-1,n),& \mathrm{if}\ (\mathfrak{g}_1,\mathfrak{g}_2)=(\mathfrak{so}_{2n+1}, \mathfrak{osp}(2|2n)).
\end{cases}
\end{align*}
For $k_1\in \C\backslash \{-h^\vee_1\}$, define $k_2 \in \C$ by the formula
\begin{align}\label{eq: ell}
r(k_1+h^\vee_1)(k_2+h^\vee_2)=1.
\end{align}
\begin{theorem}\label{thm:coset}
Let $k_1, k_2\in \C$ be generic satisfying \eqref{eq: ell}. Then we have an isomorphism
\begin{align*}
&\mathrm{Com}\left( \pi_{H_1}, \mathcal{W}^{k_1} (\mathfrak{g}_1, f_\mathrm{sub}) \right) \simeq \mathrm{Com}\left( \pi_{H_2}, \mathcal{W}^{k_2} (\mathfrak{g}_2) \right),
\end{align*}
for $(\mathfrak{g}_1,\mathfrak{g}_2)=(\mathfrak{sl}_{n+1}, \mathfrak{sl}(1|n+1))$, or $(\mathfrak{so}_{2n+1}, \mathfrak{osp}(2|2n))$. 
\end{theorem}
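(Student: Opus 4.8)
The plan is to prove Theorem \ref{thm:coset} by comparing the two screening-operator descriptions of the cosets furnished by Propositions \ref{prop:sub-W-coset} and \ref{prop:super-W-coset}. By those propositions, for generic $k_1$ (respectively $k_2$) the left-hand side is the joint kernel of an explicit system of exponential screening operators on the rank-$(n+1)$ Heisenberg vertex algebra $\pi^{k_1+h^\vee_1}_{\widetilde\alpha}$, and the right-hand side is the analogous joint kernel on $\pi^{k_2+h^\vee_2}_{\widetilde\beta}$. Since both are nondegenerate of rank $n+1$ and we work over $\C$, they are abstractly isomorphic as vertex algebras, so the first step is to fix an explicit isomorphism $T\colon\pi^{k_1+h^\vee_1}_{\widetilde\alpha}\xrightarrow{\sim}\pi^{k_2+h^\vee_2}_{\widetilde\beta}$ and to compare the two screening systems inside this common Heisenberg algebra.

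The role of the duality relation \eqref{eq: ell} is precisely to make the two systems Feigin--Frenkel dual to one another. Writing $K_i=k_i+h^\vee_i$ so that $rK_1K_2=1$, the subregular screening momenta carry the factor $-\tfrac{1}{K_1}$ whereas the principal super momenta carry $-\tfrac{1}{K_2}=-rK_1$; thus the two configurations are exchanged by the level inversion $K_1\leftrightarrow(rK_2)^{-1}$, in exactly the way the dual families $\mathrm{e}^{b\alpha_i}$ and $\mathrm{e}^{-b^{-1}\alpha^\vee_i}$ occur in the Feigin--Frenkel realization of principal $\mathcal W$-algebras. I would organize the matching so that the ``bulk'' simple roots $\alpha_2,\dots,\alpha_n$ give purely bosonic screenings whose joint kernels are governed by the Feigin--Frenkel duality of the underlying type $A_{n-1}$, respectively $B_{n-1}$, data, while the two distinguished nodes are treated with the free field machinery of Section \ref{sec:Wfree}: on the subregular side the node-$1$ screening is dressed by the $\beta\gamma$-system via the Friedan--Martinec--Shenker bosonization, and on the super side the node-$0$ screening is dressed by the $bc$-system via the boson--fermion correspondence, the two being intertwined by the $\widehat{\mathfrak{gl}}(1|1)$-resolution of Section \ref{sec:gl11} (Proposition \ref{longexactsequence}), whose rank-one seed is the Kazama--Suzuki duality between $V^{k_1}(\mathfrak{sl}_2)$ and the $N=2$ superconformal algebra.

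The hard part is the last step. The two screening systems are genuinely inequivalent: the Gram matrices of the two momentum systems have respective diagonal blocks scaled by $\tfrac{1}{K_1}$ and $\tfrac{1}{K_2}$, hence are not permutation-congruent for generic levels, so \emph{no} isometry of the ambient Heisenberg algebra can carry one system of momenta onto the other. The coincidence of the two joint kernels is therefore not formal but is a Feigin--Frenkel-type statement that must be proven, and I expect this to be the main obstacle. I plan to establish it by relating the screening currents directly, showing that under $T$ each screening of the subregular description preserves the joint kernel of the super description and conversely, so that $T$ restricts to a homomorphism between the two cosets. This restricted map is injective, being a restriction of the Heisenberg isomorphism $T$; surjectivity, equivalently equality of the two subalgebras inside the common Heisenberg algebra, I would deduce at generic level from a comparison of graded dimensions, using that both cosets are freely generated with the same generating type. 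Genericity of $k_1$ (equivalently $k_2$) is thus used twice, once to validate the screening descriptions of Propositions \ref{prop:sub-W-coset} and \ref{prop:super-W-coset}, and once to guarantee that the kernels attain their expected size so that the map is an isomorphism rather than merely injective.
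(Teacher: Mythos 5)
Your proposal follows essentially the same route as the paper: both sides are reduced via Propositions \ref{prop:sub-W-coset} and \ref{prop:super-W-coset} to joint kernels of screenings on rank-$(n+1)$ Heisenberg algebras, and the mismatch in screening normalizations is resolved by the rank-one Feigin--Frenkel (Virasoro) duality exactly as you anticipate. The one place where the paper is cleaner than your plan for the ``hard part'' is that the relation \eqref{eq: ell} makes the Gram matrices of the bases $\{\widetilde{\alpha}_i\}$ and $\{\widetilde{\beta}_i\}$ literally equal, so after the induced identification of the ambient Heisenberg algebras one replaces $\Ker\int:\mathrm{e}^{-\frac{1}{k_1+h^\vee_1}\int\widetilde{\alpha}_i(z)}:dz$ by $\Ker\int:\mathrm{e}^{\int\widetilde{\alpha}_i(z)}:dz$ node by node via the Virasoro duality and obtains equality of the joint kernels on the nose, with no need for the injectivity-plus-graded-character argument you fall back on.
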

\begin{proof}
For $k_1, k_2\in\C$ satisfying \eqref{eq: ell}, we have an isomorphism
\begin{align*}
\begin{array}{cccc}
\pi_{\widetilde{\alpha}}^{k_1+h^\vee_1}& \rightarrow &\pi_{\widetilde{\beta}}^{k_2+h^\vee_2}&\\
\widetilde{\alpha}_i(z)& \mapsto &\widetilde{\beta}_i(z),&(
i = 0, \ldots, n),
\end{array}
\end{align*}
since both of the Gram matrices for $\{\widetilde{\alpha}_i\}_{i=0}^n$ and $\{\widetilde{\beta}_i\}_{i=0}^n$ are 
\begin{align*}
\bordermatrix{
             & 0 & 1 & 2 & \cdots & n-2    & n-1    & n       \cr
\hspace{3.5mm} 0 & 1  & -\mathcal{K}    & 0         & \cdots & 0        & 0        &  0        \cr
\hspace{3.5mm} 1 & -\mathcal{K}    & 2\mathcal{K}  & -\mathcal{K}     & \cdots & 0         & 0        & 0         \cr
\hspace{3.5mm} 2 & 0    & -\mathcal{K}  & 2\mathcal{K}     & \cdots & 0         & 0        & 0         \cr
\hspace{3.5mm} \vdots & \vdots & \vdots & \vdots  & \ddots & \vdots & \vdots & \ \vdots \cr
n-2   & 0        & 0         & 0          & \cdots & 2\mathcal{K}  & -\mathcal{K}    & 0         \cr
n-1     &  0      & 0          & 0         & \cdots & -\mathcal{K}    & 2\mathcal{K}         & -r\mathcal{K}         \cr
\hspace{3.5mm} n      & 0       & 0          & 0          & \cdots & 0         & -r\mathcal{K}         & 2r\mathcal{K}          \cr
}_{,}
\end{align*}
where $\mathcal{K} = k_1+h^\vee_1$. By applying the Feigin-Frenkel duality for the Virasoro vertex algebras (cf.\ \cite[Chapter 15]{FBZ}), we have
\begin{align*}
&\Ker_{\pi_{\widetilde{\alpha}}^{k_1+h_1^\vee}}\int :\mathrm{e}^{-\frac{1}{k_1+h^\vee_1} \int \widetilde{\alpha}_i(z)}:dz\ =\ \Ker_{\pi_{\widetilde{\alpha}}^{k_1+h^\vee_1}}\int :\mathrm{e}^{\int \widetilde{\alpha}_i(z)}:dz,\quad (i=1, \ldots, n-1),\\
&\Ker_{\pi_{\widetilde{\alpha}}^{k_1+h^\vee_1}}\int:\mathrm{e}^{-\frac{1}{r(k_1+h^\vee_1)} \int \widetilde{\alpha}_n(z)}:dz\ =\ \Ker_{\pi_{\widetilde{\alpha}}^{k_1+h^\vee_1}}\int:\mathrm{e}^{\int \widetilde{\alpha}_n(z)}:dz,
\end{align*}
for generic $k_1$.
Hence, for generic $k_1,k_2\in\C$ satisfying \eqref{eq: ell},
\begin{align*}
\mathrm{Com}\left( \pi_{H_1}, \mathcal{W}^{k_1} (\mathfrak{g}_1, f_\mathrm{sub}) \right) &\simeq \bigcap_{i=0}^n \Ker_{\pi_{\widetilde{\alpha}}^{k_1+h^\vee_1}}\int:\mathrm{e}^{\int \widetilde{\alpha}_i(z)}:dz
\simeq \bigcap_{i=0}^n \Ker_{\pi_{\widetilde{\beta}}^{k_2+h^\vee_2}}\int:\mathrm{e}^{\int \widetilde{\beta}_i(z)}:dz\\
&\simeq \mathrm{Com}\left( \pi_{H_2}, \mathcal{W}^{k_2} (\mathfrak{g}_2) \right)
\end{align*}
by Proposition \ref{prop:sub-W-coset} and Proposition \ref{prop:super-W-coset}.
\end{proof}

\subsection{Kazama-Suzuki Coset}
Let $(\mathfrak{g}_1, \mathfrak{g}_2)$ be as in Section\ \ref{Duality}.
Let $\pi_{\widetilde{H}_1}\subset \mathcal{W}^k (\mathfrak{g}_1, f_\mathrm{sub})$ $\otimes V_\Z$ be the Heisenberg vertex subalgebra generated by the field
\begin{align}\label{eq: tilde{H}_1}
\widetilde{H}_1(z) :=\phi(z) - H_1(z)=-\omega^\vee_1(z)+y(z)+\phi(z)
\end{align}
(see \eqref{eq: H_1}).
Next, consider the lattice $\Z\sqrt{-1}\subset \mathbb{C}$, i.e., the lattice $\Z\psi$, spanned by $\psi$, equipped with a bilinear form $(\cdot|\cdot)$, which satisfies $(m\psi|n\psi)=-mn$. Let $\pi_{\Z\sqrt{-1}}$ be the Heisenberg vertex algebra associated with the abelian Lie algebra $\C\otimes_{\Z}\Z\sqrt{-1}$ and
\begin{align*}
V_{\Z\sqrt{-1}} := \bigoplus_{n \in \Z}\pi_{\Z\sqrt{-1}, n\psi}.
\end{align*}
the lattice vertex superalgebra associated with $\Z\sqrt{-1}$.
Let $\pi_{\widetilde{H}_2}\subset \mathcal{W}^k (\mathfrak{g}_2) \otimes V_{\Z\sqrt{-1}}$, be the Heisenberg vertex subalgebra generated by the field
\begin{align}\label{eq: tilde{H}_2}
\widetilde{H}_2(z) :=\psi(z) + H_2(z)=\omega^\vee_0(z)+\phi(z)+\psi(z)
\end{align}
(see \eqref{eq: H_2}).
\begin{theorem}\label{thm:KS}
Let $k_1, k_2\in\C$ be generic satisfying \eqref{eq: ell}. Then we have isomorphisms
\begin{enumerate}
\item $\mathcal{W}^{k_1} (\mathfrak{g}_1, f_\mathrm{sub}) \simeq \mathrm{Com}\left( \pi_{\widetilde{H}_2}, \mathcal{W}^{k_2} (\mathfrak{g}_2) \otimes V_{\Z\sqrt{-1}} \right)$,
\item $\mathcal{W}^{k_2} (\mathfrak{g}_2) \simeq \mathrm{Com}\left( \pi_{\widetilde{H}_1},  \mathcal{W}^{k_1} (\mathfrak{g}_1, f_\mathrm{sub}) \otimes V_\Z \right)$,
\end{enumerate}for $(\mathfrak{g}_1,\mathfrak{g}_2)=(\mathfrak{sl}_{n+1}, \mathfrak{sl}(1|n+1))$, or $(\mathfrak{so}_{2n+1}, \mathfrak{osp}(2|2n))$.
\begin{proof}
Recall that $r$ is the lacity of $\mathfrak{g}_1$ and $h^\vee_i$ is the dual Coxeter number of $\mathfrak{g}_i$ as in the beginning of Section \ref{Duality}. Denote by $\mathfrak{h}_i$ a Cartan subalgebra of $\mathfrak{g}_i$. Let $\{\alpha_i\}_{i=1}^n$, (resp.\ $\{\beta_i\}_{i=0}^n$), be the set of simple roots of $\mathfrak{g}_1$, (resp.\ $\mathfrak{g}_2$), and $\alpha_i(z)=Y(\alpha_{i(-1)}|0\rangle,z)$ the corresponding fields on $\pi^{k_1+h^\vee_1}_{\mathfrak{h}_1}$, (resp.\ $\beta_i(z)=Y(\beta_{i(-1)}|0\rangle,z)$ on $\pi^{k_2+h^\vee_2}_{\mathfrak{h}_2}$).

First, we will show (1). By \eqref{eq:super-W-free}, we have a vertex superalgebra embedding
\begin{align}\label{eq: Kazama-Suzuki emb}
\Psi \otimes \operatorname{id} \colon\quad \mathcal{W}^{k_2} (\mathfrak{g}_2) \otimes V_{\Z\sqrt{-1}} \hookrightarrow V_\Z \otimes \pi^{k_2+h^\vee_2}_{\mathfrak{h}_2} \otimes V_{\Z\sqrt{-1}}.
\end{align}
Let $V_{\Z(\phi+\psi)}\subset V_\Z \otimes V_{\Z\sqrt{-1}}$ be the lattice vertex subalgebra corresponding to the sublattice $\Z(\phi+\psi)\subset \Z+\Z\sqrt{-1}$ and 
\begin{align*}
V_{X+Y}\subset V_\Z \otimes \pi^{k_2+h^\vee_2}_{\mathfrak{h}_2} \otimes V_{\Z\sqrt{-1}}
\end{align*}
the vertex subalgebra generated by $V_{\Z(\phi+\psi)}$ and the Heisenberg vertex subalgebra $\pi_{X,Y}$ generated by the fields
\begin{align*}
X(z) = -\frac{1}{k_2+h^\vee_2}\beta_0(z) + \phi(z),\quad
Y(z) = \frac{1}{k_2+h^\vee_2}\beta_0(z) + \psi(z).
\end{align*}
Let $\pi_A\subset V_\Z \otimes \pi^{k_2+h^\vee_2}_{\mathfrak{h}_2} \otimes V_{\Z\sqrt{-1}}$ be the Heisenberg vertex subalgebra generated by the fields
\begin{align*}
&A_i(z) = 
\begin{cases}
r \beta_1(z) - \phi(z) - \psi(z),& i=1,\\
r \beta_i(z),&i = 2, \ldots, n-1,\\
\beta_n(z),& i=n.
\end{cases}
\end{align*}
It follows from 
\begin{align*}
X(z)A_i(w) \sim 0 \sim Y(z)A_i(w),\quad
i=1, \ldots, n,
\end{align*}
that $V_{X+Y}\otimes \pi_A\subset  V_\Z \otimes \pi^{k_2+h^\vee_2}_{\mathfrak{h}_2} \otimes V_{\Z\sqrt{-1}}$.
By \eqref{eq: tilde{H}_2}, we have
\begin{align*}
\mathrm{Com}\left( \pi_{\widetilde{H}_2}, V_\Z \otimes \pi^{k_2+h^\vee_2}_{\mathfrak{h}_2} \otimes V_{\Z\sqrt{-1}} \right) = V_{X+Y} \otimes \pi_A,
\end{align*}
and thus \eqref{eq: Kazama-Suzuki emb} implies
\begin{align*}
\mathrm{Com}\left( \pi_{\widetilde{H}_2}, \mathcal{W}^{k_2} (\mathfrak{g}_2) \otimes V_{\Z\sqrt{-1}} \right) \hookrightarrow V_{X+Y} \otimes \pi_A,
\end{align*}
whose image for generic $k_2$ coincides with
\begin{align*}
&\Psi\otimes \mathrm{id}\left(\mathrm{Com}\left( \pi_{\widetilde{H}_2}, \mathcal{W}^{k_2} (\mathfrak{g}_2) \otimes V_{\Z\sqrt{-1}} \right)\right)\\
=&\Ker\int:\mathrm{e}^{\int X(z)}:dz\ \cap\ \Ker\int:\mathrm{e}^{-\frac{1}{r(k_2+h^\vee_2)} \int (A_1+X+Y)(z)}:dz\\
&\cap\ \bigcap_{i=2}^{n-1} \Ker\int :\mathrm{e}^{-\frac{1}{r(k_2+h^\vee_2)} \int A_i(z)}:dz\ \cap\ \Ker\int :\mathrm{e}^{-\frac{1}{k_2+h^\vee_2} \int A_n(z)}:dz
\end{align*}
by \eqref{eq:super-W-image}. Since $(A_1+X+Y)_{(0)}|n(\phi+\psi)\rangle=0$, we have
\begin{align*}
\begin{split}
&\Ker\int :\mathrm{e}^{-\frac{1}{r(k_2+h^\vee_2)} \int (A_1+X+Y)(z)}:dz\\
&=\bigoplus_{n\in\Z}\left(\Ker_{\pi_{X,Y}}\int :\mathrm{e}^{-\frac{1}{r(k_2+h^\vee_2)} \int (A_1+X+Y)(z)}:dz\right)_{(-1)}|n(\phi+\psi)\rangle\\
&=\bigoplus_{n\in\Z}\left(\Ker_{\pi_{X,Y}}\int :\mathrm{e}^{\int (A_1+X+Y)(z)}:dz\right)_{(-1)}|n(\phi+\psi)\rangle\\
&=\Ker\int :\mathrm{e}^{\int (A_1+X+Y)(z)}:dz.
\end{split}
\end{align*}
by the Feigin-Frenkel duality for the Virasoro vertex algebras, (cf.\ \cite[Chapter 15]{FBZ}). 
Similarly, we have
\begin{align*}
\begin{split}
&\Ker\int :\mathrm{e}^{-\frac{1}{r(k_2+h^\vee_2)} \int A_i(z)}:dz=\Ker\int :\mathrm{e}^{\int A_i(z)}:dz,\quad i =1,\ldots,n-1,\\
&\Ker\int :\mathrm{e}^{-\frac{1}{k_2+h^\vee_2} \int A_n(z)}:dz=\Ker\int :\mathrm{e}^{\int A_n(z)}:dz.
\end{split}
\end{align*}
Therefore, we have
\begin{align*}
&(\Psi\otimes \mathrm{id})\left(\mathrm{Com}\left( \pi_{\widetilde{H}_2}, \mathcal{W}^{k_2} (\mathfrak{g}_2) \otimes V_{\Z\sqrt{-1}} \right)\right)\\
= &\Ker\int:\mathrm{e}^{\int X(z)}:dz\ \cap\ \Ker\int:\mathrm{e}^{\int (A_1+X+Y)(z)}:dz\ \cap\ \bigcap_{i=2}^n \Ker\int :\mathrm{e}^{\int A_i(z)}:dz.
\end{align*}
Now (1) follows from the above equation with \eqref{eq:sub-W-image} and the isomorphism
\begin{align*}
&V_{X+Y} \otimes \pi_A \xrightarrow{\sim} V_{x+y} \otimes \pi^{k_1+h^\vee_1}_{\mathfrak{h}_1}\\
&X(z) \mapsto x(z),\quad
Y(z) \mapsto y(z),\quad
A_i(z) \mapsto -\frac{1}{k_1+h^\vee_1}\alpha_i(z),\quad
i = 1, \ldots, n.
\end{align*}

Next, we will show (2) in the same way as the proof of (1). By \eqref{eq:sub-W-free}, we have a vertex superalgebra embedding
\begin{align*}
\Upsilon\otimes \mathrm{id}: \mathcal{W}^{k_1} (\mathfrak{g}_1, f_\mathrm{sub}) \otimes V_\Z \hookrightarrow V_{x+y} \otimes \pi^{k_1+h^\vee_1}_{\mathfrak{h}_1} \otimes V_\Z.
\end{align*}
Let $V_{\widetilde{\Z}}$ be the vertex superalgebra generated by the fields $:\mathrm{e}^{\int \pm\widetilde{\phi}(z)}:$ where
\begin{align*}
\widetilde{\phi}(z) = x(z) + y(z) + \phi(z),
\end{align*}
and $\pi_B$ the Heisenberg vertex subalgebra generated by the fields
\begin{align*}
&B_0(z) = -y(z)-\phi(z),\quad B_1(z) = \alpha_1(z) - (k_1+h^\vee_1)(x+y)(z),\\
&B_i(z) = \alpha_i(z),\ (i = 2, \ldots, n-1),\quad B_n(z) = r \alpha_n(z).
\end{align*}
Then we have $\mathrm{Com}\left( \pi_{\widetilde{H}_1}, V_{x+y} \otimes \pi^{k_1+h^\vee_1}_{\mathfrak{h}_1} \otimes V_\Z \right) = V_{\widetilde{\Z}} \otimes \pi_B$ and thus
\begin{align*}
\Upsilon\otimes \mathrm{id}: \mathrm{Com}\left( \pi_{\widetilde{H}_1}, \mathcal{W}^{k_1} (\mathfrak{g}_1, f_\mathrm{sub}) \otimes V_\Z \right) \hookrightarrow V_{\widetilde{\Z}} \otimes \pi_B,
\end{align*}
whose image for generic $k_1$ coincides with
\begin{align*}
&\Ker \int :\mathrm{e}^{\int (B_0+\widetilde{\phi})(z)}:\ dz\ \cap\ \bigcap_{i=1}^{n-1} \Ker\int :\mathrm{e}^{-\frac{1}{k_1+h^\vee_1} \int B_i(z)}:dz\\
&\cap\ \Ker\int :\mathrm{e}^{-\frac{1}{r(k_1+h^\vee_1)} \int B_n(z)}:dz\\
&=\Ker \int :\mathrm{e}^{\int (B_0+\widetilde{\phi})(z)}:\ dz\ \cap\ \bigcap_{i=1}^n \Ker\int :\mathrm{e}^{\int B_i(z)}:dz
\end{align*}
by \eqref{eq:sub-W-image} and the Feigin-Frenkel duality for the Virasoro vertex algebras. Now (2) follows from the above equation with \eqref{eq:super-W-image} and the isomorphism
\begin{align*}
&V_{\widetilde{\Z}} \otimes \pi_B \xrightarrow{\sim} V_\Z \otimes \pi^{k_2+h^\vee_2}_{\mathfrak{h}_2}\\
&:\mathrm{e}^{\int \pm\widetilde{\phi}(z)}:\ \mapsto\ :\mathrm{e}^{\int \pm\phi(z)}:,\quad
B_i(z) \mapsto -\frac{1}{k_2+h^\vee_2}\beta_i(z),\quad
i= 0,\ldots, n.
\end{align*}
\end{proof}
\end{theorem}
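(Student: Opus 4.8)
The plan is to prove both isomorphisms by transporting each $\mathcal{W}$-(super)algebra to its screening realization from Section~\ref{sec:Wfree} and then computing the relevant Heisenberg commutant inside the ambient free field algebra; I would establish (1) in detail and obtain (2) by the mirror argument.

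For (1), I would begin from the embedding $\Psi \otimes \mathrm{id}$ of $\mathcal{W}^{k_2}(\mathfrak{g}_2) \otimes V_{\Z\sqrt{-1}}$ into $V_\Z \otimes \pi^{k_2+h^\vee_2}_{\mathfrak{h}_2} \otimes V_{\Z\sqrt{-1}}$ supplied by \eqref{eq:super-W-free}, whose image is cut out by the screening operators in \eqref{eq:super-W-image}. The first step is to identify $\mathrm{Com}(\pi_{\widetilde{H}_2}, V_\Z \otimes \pi^{k_2+h^\vee_2}_{\mathfrak{h}_2} \otimes V_{\Z\sqrt{-1}})$. Using the explicit generator $\widetilde{H}_2(z) = \omega^\vee_0(z) + \phi(z) + \psi(z)$ from \eqref{eq: tilde{H}_2}, I would write down an orthogonal complement spanned by a lattice piece $V_{\Z(\phi+\psi)}$ together with a pair of Heisenberg fields $X, Y$, and a further Heisenberg algebra $\pi_A$ generated by rescaled simple-root fields $A_1, \ldots, A_n$, all of which commute with $\widetilde{H}_2$; the claim to verify is that this commutant equals $V_{X+Y} \otimes \pi_A$. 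Intersecting with the inherited screenings then realizes $\mathrm{Com}(\pi_{\widetilde{H}_2}, \mathcal{W}^{k_2}(\mathfrak{g}_2) \otimes V_{\Z\sqrt{-1}})$ inside $V_{X+Y} \otimes \pi_A$.

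The second step is to flip the screening exponents from $-\frac{1}{k_2+h^\vee_2}$ to $+1$ via the Feigin-Frenkel duality for the Virasoro vertex algebra. The hard part will be the screening attached to $A_1 + X + Y$: because its exponent mixes the lattice directions, the Virasoro flip does not apply verbatim. I would resolve this by noting that the zero mode $(A_1+X+Y)_{(0)}$ annihilates each lattice highest-weight vector $|n(\phi+\psi)\rangle$, so that the kernel of this screening decomposes as a direct sum over $n \in \Z$ of Fock-module kernels; on each graded summand the Feigin-Frenkel flip applies, and reassembling the sum produces the $+1$ screening on the full lattice module. The remaining screenings, attached to $A_i$ for $i \geq 2$, are honest Fock screenings and flip directly.

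Finally, I would exhibit the explicit isomorphism $V_{X+Y} \otimes \pi_A \xrightarrow{\sim} V_{x+y} \otimes \pi^{k_1+h^\vee_1}_{\mathfrak{h}_1}$ sending $X \mapsto x$, $Y \mapsto y$, and $A_i \mapsto -\frac{1}{k_1+h^\vee_1}\alpha_i$; under the duality relation \eqref{eq: ell} this carries the flipped screenings precisely onto those of \eqref{eq:sub-W-image}, whence the commutant is identified with $\mathrm{Im}(\Upsilon) \simeq \mathcal{W}^{k_1}(\mathfrak{g}_1, f_\mathrm{sub})$. All identifications of images hold for generic $k$, so the chain is valid throughout the generic locus. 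For (2) I would run the mirror construction: embed $\mathcal{W}^{k_1}(\mathfrak{g}_1, f_\mathrm{sub}) \otimes V_\Z$ via $\Upsilon \otimes \mathrm{id}$, compute $\mathrm{Com}(\pi_{\widetilde{H}_1}, \cdot) = V_{\widetilde{\Z}} \otimes \pi_B$ with $\widetilde{\phi} = x + y + \phi$ and the analogous fields $B_0, \ldots, B_n$, apply the same Feigin-Frenkel flip (again with the lattice decomposition handling the $B_0 + \widetilde{\phi}$ screening), and transport to $V_\Z \otimes \pi^{k_2+h^\vee_2}_{\mathfrak{h}_2}$ to recover \eqref{eq:super-W-image}, that is $\mathcal{W}^{k_2}(\mathfrak{g}_2)$.
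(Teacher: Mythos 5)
Your proposal reproduces the paper's own argument step for step: the same embedding $\Psi\otimes\mathrm{id}$, the same identification of the commutant as $V_{X+Y}\otimes\pi_A$, the same Feigin--Frenkel flip of the screening exponents with the lattice decomposition over $|n(\phi+\psi)\rangle$ handling the mixed screening $A_1+X+Y$, and the same final isomorphism onto $V_{x+y}\otimes\pi^{k_1+h^\vee_1}_{\mathfrak{h}_1}$, with (2) as the mirror construction. This is correct and is essentially the paper's proof.
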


\section{Applications}\label{sec:Heisenbergcoset}

Here we discuss some important consequences of the theory of Heisenberg cosets \cite{CKLR}, see \cite{L2} for earlier literature.

\subsection{Simplicity of Heisenberg cosets}

We begin with simple observations for vertex superalgebras. Let $B$ be a simple vertex superalgebra and $\{B_\lambda\}_{\lambda\in I}$ a set of countably many inequivalent simple $B$-modules which contains $B$ as $B_0$. Let $C$ be a (not necessarily simple) vertex superalgebra and $\{C_\lambda\}_{\lambda\in I}$ a set of $C$-modules, which contains $C$ as $C_0$, with the same index set $I$. Assume that the $B\otimes C$-module 
\begin{align*}
A:=\bigoplus_{\lambda\in I}B_\lambda\otimes C_\lambda
\end{align*}
has the structure of a vertex superalgebra extending the $B\otimes C$-module structure.
Assume the following conditions:
\begin{enumerate}
\item
For each $\lambda\in I$, $B_\lambda$ and $C_\lambda$ are of countable dimension.	
\item
The $B$-modules $\{B_\lambda\}_{\lambda\in I}$ are objects in a semi-simple abelian full subcategory $\mathcal{B}$ of the category of weak $B$-modules. $\mathcal{B}$ is closed under direct sum over any countable index set.
\end{enumerate}
Then we have the following:
\begin{proposition}[{\cite[Theorem 8.1, Remark 8.3]{CL3}}]\label{decomposition under quotient}
Any quotient $A_s$ of the vertex superalgebra $A$ is an object of $\mathcal{B}$. The coset vertex superalgebra $D:=\mathrm{Com}(B,A_s)$ is a quotient of $C$. For each $\lambda\in I$, there exists some quotient $D_\lambda$ of the $C$-module $C_\lambda$, which admits a $D$-module structure such that 
\begin{align*}
A_s\simeq \bigoplus_{\lambda\in I}B_\lambda\otimes D_\lambda
\end{align*}
as $B\otimes D$-modules.
\end{proposition}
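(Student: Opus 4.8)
The plan is to decompose $A_s$ as a $B$-module into isotypic components and then transport both the vertex superalgebra structure and the coset structure through this decomposition. First I would observe that $A$ itself lies in $\mathcal{B}$: as a $B$-module, $A=\bigoplus_{\lambda\in I}B_\lambda\otimes C_\lambda\cong\bigoplus_{\lambda\in I}B_\lambda^{\oplus\dim C_\lambda}$, which is a countable direct sum of the objects $B_\lambda\in\mathcal{B}$ by condition (1), and $\mathcal{B}$ is closed under such sums by condition (2). Writing $A_s=A/\mathcal{I}$ for a vertex superalgebra ideal $\mathcal{I}$, I note that $\mathcal{I}$ is in particular a $B$-submodule of the semisimple object $A$; since $\mathcal{B}$ is semisimple and full, any $B$-submodule of $A$ is a sum of simple subobjects isomorphic to the $B_\lambda$ and hence again lies in $\mathcal{B}$, so $\mathcal{I}\in\mathcal{B}$ and the quotient $A_s\in\mathcal{B}$. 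This proves the first assertion.

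Next I would produce the isotypic decomposition. Setting $D_\lambda:=\Hom_B(B_\lambda,A_s)$, the key input is Schur's lemma in the countable-dimensional setting (Dixmier's lemma): because $\C$ is uncountable and each $B_\lambda$ is countable-dimensional by (1), one has $\Hom_B(B_\lambda,B_\mu)=\delta_{\lambda\mu}\C$. Semisimplicity of $A_s$ in $\mathcal{B}$, together with the pairwise inequivalence of the $B_\lambda$, then yields the canonical $B$-module isomorphism $A_s\simeq\bigoplus_{\lambda\in I}B_\lambda\otimes D_\lambda$, in which $B_\lambda\otimes D_\lambda$ is the $B_\lambda$-isotypic component and $D_\lambda$ is its multiplicity space.

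Then I would identify the coset and establish the surjections. The commutant $D:=\mathrm{Com}(B,A_s)$ consists of those $a\in A_s$ with $b_{(n)}a=0$ for all $b\in B$, $n\ge0$; evaluation at the vacuum, $\phi\mapsto\phi(|0\rangle)$, gives a linear isomorphism $\Hom_B(B_0,A_s)=D_0\xrightarrow{\sim}D$, and I would check this identification is one of vertex superalgebras. For surjectivity, the quotient map $A\twoheadrightarrow A_s$ carries the $B_\lambda$-isotypic component $B_\lambda\otimes C_\lambda$ of $A$ onto that of $A_s$ (a surjection of semisimple modules is surjective on each isotypic piece, as $\mathcal{I}$ splits off as a sum of isotypic pieces); applying $\Hom_B(B_\lambda,-)$ produces surjections $C_\lambda\twoheadrightarrow D_\lambda$ of multiplicity spaces. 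In particular $C=C_0\twoheadrightarrow D_0=D$ respects the vertex superalgebra operations, so $D$ is a quotient of $C$. Finally, since $D=\mathrm{Com}(B,A_s)$ commutes with $B$, its action on $A_s$ preserves each component $B_\lambda\otimes D_\lambda$ and descends to the multiplicity space $D_\lambda$, making $D_\lambda$ a $D$-module; the decomposition $A_s\simeq\bigoplus_{\lambda\in I}B_\lambda\otimes D_\lambda$ is then one of $B\otimes D$-modules, as claimed.

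The main obstacle I expect is the bookkeeping that upgrades the abstract $B$-module isotypic decomposition to a statement about vertex operations: verifying that $\mathrm{Com}(B,A_s)$ genuinely acts on each multiplicity space $D_\lambda$ compatibly with the fields $Y(\cdot,z)$, and that the surjection $C_\lambda\to D_\lambda$ intertwines the $C$- and $D$-actions. The semisimplicity hypothesis on $\mathcal{B}$ is exactly what lets one split $\mathcal{I}$ as a sum of isotypic pieces, thereby keeping the decomposition, the commutant, and the quotient map all mutually compatible.
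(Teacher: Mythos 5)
Your proposal is correct and follows essentially the same route as the paper's own (very terse) proof: show $A\in\mathcal{B}$ using countable dimensionality and closure under countable sums, invoke semisimplicity of $\mathcal{B}$ to get the isotypic decomposition with multiplicity spaces $D_\lambda=\Hom_B(B_\lambda,A_s)$, and read off the surjections $C_\lambda\twoheadrightarrow D_\lambda$ and the $D$-module structures. Your version merely fills in the details (Dixmier--Schur for countable dimension, identification of $\mathrm{Com}(B,A_s)$ with the vacuum multiplicity space) that the paper leaves implicit.
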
 
\begin{proof} The assertions are obvious, but we include the proof for the completeness of the paper. Since $A$ is of countable dimension, $A$ is an object of $\mathcal{B}$ by the assumptions on $\mathcal{B}$. Since $\mathcal{B}$ is abelian, any quotient of $A$ as a vertex superalgebra is an object of $\mathcal{B}$. Since $\mathcal{B}$ is semi-simple, we have
\begin{align*}
A_s\simeq \bigoplus_{\lambda\in I} B_\lambda \otimes D_\lambda, \quad D_\lambda:=\mathrm{Hom}_{B}(B_\lambda, A^s)
\end{align*}
as $B$-modules. Then we have a natural surjection $C_\lambda\twoheadrightarrow D_\lambda$ for each $\lambda\in I$ as vector spaces. Since $D = \mathrm{Com}(B,A_s)$, it is a vertex superalgebra obtained as a quotient of $C$. Since $A_s$ is a $D$-module, the same is true for $D_\lambda=\mathrm{Hom}_{B}(B_\lambda, A_s)$.
\end{proof}
Next, we consider a criterion for the simplicity of the coset vertex superalgebra of a pair of simple vertex superalgebras. 
\begin{lemma}\label{vacuum-like vectors}
Let $V$ be a vertex superalgebra and $M$ a $V$-module. Let $m\in M$ be a vacuum-like vector, i.e., an element satisfying that $a_{(n)}m=0$ for all $a\in V$, $n\geq0$. Then the $\C$-linear map
$$F_m:\ V\rightarrow M,\quad a\mapsto a_{(-1)}m$$
is a $V$-module homomorphism.
\end{lemma}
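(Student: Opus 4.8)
The plan is to verify directly that $F_m$ commutes with every mode operator, which is exactly the defining property of a $V$-module homomorphism from the adjoint module $V$ to $M$. Concretely, writing $F_m(a) = a_{(-1)}m$, I must show that
\begin{align*}
(b_{(n)}a)_{(-1)}m = b_{(n)}\bigl(a_{(-1)}m\bigr)
\end{align*}
for all $a, b \in V$ and all $n \in \Z$; the $\C$-linearity of $F_m$ is immediate from that of the $(-1)$-product. Once this identity is established the lemma follows at once.

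The key tool is the iterate form of the Borcherds identity for modes acting on the $V$-module $M$ (see, e.g., \cite{K2}), namely
\begin{align*}
(b_{(n)}a)_{(-1)}m = \sum_{j\geq 0}(-1)^j\binom{n}{j}\Bigl( b_{(n-j)}a_{(j-1)}m - (-1)^{n+\bar b\,\bar a}\,a_{(n-1-j)}b_{(j)}m \Bigr),
\end{align*}
valid as an operator identity on $M$. I would then invoke the vacuum-like hypothesis $x_{(p)}m = 0$ for all $x \in V$ and $p \geq 0$ to collapse the right-hand side: in the first family of terms $a_{(j-1)}m$ vanishes whenever $j \geq 1$, so only the $j = 0$ summand survives and contributes $b_{(n)}a_{(-1)}m$; in the second family $b_{(j)}m = 0$ for every $j \geq 0$, so the whole family vanishes. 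This leaves precisely $(b_{(n)}a)_{(-1)}m = b_{(n)}a_{(-1)}m$, as required.

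The computation is essentially mechanical, so there is no serious obstacle; the only point demanding care is bookkeeping the signs and parity factors in the super setting, i.e.\ making sure the version of the Borcherds identity I quote carries the correct factor $(-1)^{n+\bar b\,\bar a}$ and the intended binomial conventions, and that the index ranges ($j \geq 1$ in the first family, $j \geq 0$ in the second) line up with the vacuum-like vanishing. Since both surviving constraints, $j-1 \geq 0$ and $j \geq 0$, are exactly what the vacuum-like condition annihilates, these conventions never affect the final answer, and the proof is complete.
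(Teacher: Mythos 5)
Your proof is correct and follows essentially the same route as the paper: both reduce the lemma to the identity $(b_{(n)}a)_{(-1)}m=b_{(n)}(a_{(-1)}m)$, which the paper disposes of by citing \cite[Proposition 4.5.6]{LL} while you verify it directly from the iterate (Borcherds) identity together with the vacuum-like condition. The mode computation, including the collapse of the two sums to the single $j=0$ term, is accurate.
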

\proof
We need to show $(a_{(n)}b)_{(-1)}m=a_{(n)}(b_{(-1)}m)$ for all $a,b\in V$ and $n\in\Z$, which are special cases of \cite[Proposition 4.5.6]{LL}.
\endproof
Now, let $W\subset V$ be simple vertex superalgebras of countable dimension. Suppose that $V$ is semi-simple as a $W$-module:
$$V=\bigoplus_{\lambda\in \Lambda} \widehat{W}_\lambda,$$
where $\Lambda$ is an index set that labels the inequivalent simple $W$-modules $W_\lambda$ that appear as submodules of $V$, and $\widehat{W}_\lambda$ is the $W$-submodule spanned by all the simple $W$-submodules isomorphic to $W_\lambda$. We assume $0\in \Lambda$ and $W_0=W$. 
\begin{lemma}
The subspace $\widehat{W}_0$ is isomorphic to $W\otimes \mathrm{Com}(W,V)$ as a $W$-module. In particular, $\widehat{W}_0$ has the structure of a vertex superalgebra.
\end{lemma}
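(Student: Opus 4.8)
The plan is to produce an explicit $W$-module isomorphism $\Phi\colon W\otimes\mathrm{Com}(W,V)\xrightarrow{\sim}\widehat{W}_0$, given on simple tensors by $b\otimes a\mapsto b_{(-1)}a$, and then to recognize $\Phi$ as the canonical vertex superalgebra homomorphism attached to the mutually commuting subalgebras $W$ and $\mathrm{Com}(W,V)$, so that $\widehat{W}_0$ inherits a vertex superalgebra structure as $\mathrm{Im}(\Phi)$.

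First I would reconcile the commutant with vacuum-like vectors. By the standard characterization of the commutant, $a\in\mathrm{Com}(W,V)$ if and only if $b_{(n)}a=0$ for all $b\in W$ and $n\geq 0$; that is, $\mathrm{Com}(W,V)$ is exactly the space of vacuum-like vectors of the $W$-module $V$ in the sense of Lemma \ref{vacuum-like vectors}. Using that lemma, the assignment $a\mapsto F_a$, $F_a(b)=b_{(-1)}a$, sends each vacuum-like vector to a $W$-module homomorphism $W\to V$, while $\phi\mapsto\phi(|0\rangle)$ produces a vacuum-like vector since $b_{(n)}\phi(|0\rangle)=\phi(b_{(n)}|0\rangle)=0$ for $n\geq 0$. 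These are mutually inverse because $F_a(|0\rangle)=a$ and $F_{\phi(|0\rangle)}=\phi$. As $W=W_0$ is simple, every homomorphism $W\to V$ has image inside the isotypic component $\widehat{W}_0$, so this yields a linear isomorphism $\mathrm{Com}(W,V)\xrightarrow{\sim}\Hom_W(W,\widehat{W}_0)=:M_0$, the multiplicity space.

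Next I would invoke semisimplicity. Because $W$ is simple of countable dimension over $\C$, Schur's lemma in the Dixmier form (valid for countable-dimensional modules over the uncountable field $\C$) gives $\End_W(W)=\C$, so the isotypic component decomposes canonically as $\widehat{W}_0\cong W\otimes M_0$ via $b\otimes\phi\mapsto\phi(b)$. Transporting through the identification $M_0\cong\mathrm{Com}(W,V)$ from the previous step turns this canonical isomorphism into precisely $\Phi(b\otimes a)=F_a(b)=b_{(-1)}a$, proving $\Phi$ is a $W$-module isomorphism. Concretely, $\Phi$ is a $W$-module map by Lemma \ref{vacuum-like vectors}; surjectivity holds because every simple submodule of $\widehat{W}_0$ isomorphic to $W$ is of the form $F_a(W)$ for a vacuum-like $a$; and injectivity follows by choosing a basis $\{a_i\}$ of vacuum-like vectors, for which $\widehat{W}_0=\bigoplus_i F_{a_i}(W)$ is a direct sum of simple submodules with each $F_{a_i}$ injective, so $\sum_i (b_i)_{(-1)}a_i=0$ forces every $b_i=0$.

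Finally, for the ``in particular'' assertion I would identify $\Phi$ with a vertex superalgebra homomorphism. Since $W$ and $\mathrm{Com}(W,V)$ are mutually commuting vertex subalgebras of $V$, the standard commuting-pair construction furnishes a canonical vertex superalgebra homomorphism $\iota\colon W\otimes\mathrm{Com}(W,V)\to V$ extending both inclusions; computing $\iota(b\otimes a)=\iota\big((b\otimes|0\rangle)_{(-1)}(|0\rangle\otimes a)\big)=b_{(-1)}a$ shows $\iota=\Phi$. Hence $\widehat{W}_0=\mathrm{Im}(\Phi)$ is a vertex subalgebra of $V$ and $\Phi$ realizes it as $W\otimes\mathrm{Com}(W,V)$. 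The main obstacle is the compatibility in the middle step: one must check that the abstract isotypic decomposition $\widehat{W}_0\cong W\otimes M_0$ is literally the same map as $\Phi$, which is exactly where the countable-dimension hypothesis (ensuring $\End_W(W)=\C$) and Lemma \ref{vacuum-like vectors} are both indispensable. Once this is secured, the vertex superalgebra structure comes for free from the commuting-pair homomorphism.
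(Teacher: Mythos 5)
Your proposal is correct and follows essentially the same route as the paper: both identify $\mathrm{Com}(W,V)$ with the vacuum-like vectors of $V$, use the Dixmier--Schur lemma (via the countable-dimension hypothesis) to get $\End_W(W)=\C$ and hence one-dimensional multiplicity spaces, and realize the isomorphism as $b\otimes a\mapsto b_{(-1)}a$ via the lemma on vacuum-like vectors. Your extra verification that this map agrees with the canonical commuting-pair homomorphism, giving the vertex superalgebra structure explicitly, is a harmless elaboration of what the paper leaves implicit.
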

\proof
Since $\widehat{W}_0$ is semi-simple as a $W$-module, we have simple submodules $\{M_\alpha\}_{\alpha\in I}$ such that 
$\widehat{W}_0=\bigoplus_{\alpha\in I}M_\alpha$. Note that $M_\alpha$ is isomorphic to $W$ as a $W$-module. Since $W$ is simple and of countable dimension over $\C$, we have
$\mathrm{End}_W(W)=\C$ by Schur's lemma. Thus $\mathrm{Com}(W,W)=\C |0\rangle$ and so the space of vacuum-like vectors in $M_\alpha$ is one dimensional.
Note that the vacuum-like vectors in $V$ with respect to $W$ is nothing but $\mathrm{Com}(W,V)$. Therefore, the linear map 
$$W\otimes \mathrm{Com}(W,V)\rightarrow \widehat{W}_0,\quad w\otimes u\mapsto w_{(-1)}u$$
is an isomorphism of $W$-modules by Lemma \ref{vacuum-like vectors}. 
\endproof
Now we have a criterion for the simplicity of $\mathrm{Com}(W,V)$. 
\begin{proposition}\label{simplicity}
Let $V$ be a simple vertex superalgebra of countable dimension and $W$ a simple vertex (super)subalgebra. Suppose that $V$ is semi-simple as a $W$-module and $W\otimes \mathrm{Com}(W,V)$ has a conformal vector $\omega$ which is also a conformal vector of $V$. Then $\mathrm{Com}(W,V)$ is also simple.
\end{proposition}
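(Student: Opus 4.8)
The plan is to show that every nonzero ideal $J$ of $C := \mathrm{Com}(W,V)$ coincides with $C$. The key device will be a linear projection $E\colon V \to C$ compatible with the coset structure, whose very construction is where the hypothesis on $\omega$ enters. Write $\omega = \omega_W + \omega_C$ with $\omega_W \in W$ and $\omega_C \in C$ the conformal vectors of the two tensor factors of $\widehat{W}_0 \simeq W \otimes C$; since $\omega$ is also a conformal vector of $V$, the operators $L_0^W = (\omega_W)_{(1)}$ and $\partial = \omega_{(0)}$ act on all of $V$. As $V$ is a semisimple $W$-module $V = \bigoplus_{\lambda} \widehat{W}_\lambda$, I would first project onto the isotypic piece $\widehat{W}_0 \simeq W \otimes C$, and then, using that $W$ is simple of CFT type (so that its lowest $L_0^W$-weight space is the vacuum line $\C|0\rangle$), project the $W$-tensor factor onto that vacuum line; composing gives $E\colon V \twoheadrightarrow C$ with $E|_C = \mathrm{id}$.

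Next I would establish the two structural properties of $E$ that make the argument run: (i) $E$ is a homomorphism of $C$-modules, i.e. $E(c_{(m)}v) = c_{(m)}E(v)$ for all $c \in C$, $v \in V$, $m \in \Z$; and (ii) $E$ commutes with $\partial$. For (i), each mode $c_{(m)}$ with $c \in C = \mathrm{Com}(W,V)$ commutes with the $W$-action, hence preserves the isotypic decomposition and acts only on the $C$-tensor factor of $\widehat{W}_0$, so both projections defining $E$ commute with $c_{(m)}$. For (ii), one checks $\partial = \partial_W + \partial_C$ with $\partial_W = (\omega_W)_{(0)}$ raising $L_0^W$-degree and annihilating the vacuum line, so the vacuum projection kills $\partial_W$ on both sides, while $\partial_C = (\omega_C)_{(0)}$ commutes with $E$ by (i).

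With $E$ in hand the endgame is short. Fix $0 \neq u \in J$. Because $V$ is simple, the ideal of $V$ generated by $u$ is all of $V$, and this ideal equals $\Span\{a_{(n)}u \mid a \in V,\ n \in \Z\}$. For each such generator I would apply $E$ together with skew-symmetry, which rewrites $a_{(n)}u$ as a sum of terms $\partial^{j}(u_{(n+j)}a)$; using (i) and (ii) this yields $E(a_{(n)}u)$ as a sum of terms $\partial^{j}\big(u_{(n+j)}E(a)\big)$. Since $E(a) \in C$ and $u \in J$, each $u_{(n+j)}E(a)$ lies in the ideal $J$ of $C$, and $J$ is $\partial$-stable; hence $E(a_{(n)}u) \in J$. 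Applying $E$ to the equality $V = \Span\{a_{(n)}u\}$ and using $E(V) = C$ gives $C \subseteq J$, so $J = C$, and $C$ is simple.

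The main obstacle is the rigorous verification of properties (i) and (ii) of $E$ in the present generality of vertex superalgebras of merely countable dimension, together with the input that $W$ is of CFT type so that the vacuum projection on the $W$-tensor factor is canonical; this is precisely where the assumption that $\omega \in W \otimes C$ is simultaneously a conformal vector of $V$ is indispensable, since it makes $L_0^W$ and $\partial$ internal operators on $V$ whose gradings are compatible with both the $W$-isotypic decomposition and the $C$-action.
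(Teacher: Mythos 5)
Your argument is essentially correct and rests on the same three pillars as the paper's proof: skew-symmetry to convert $a_{(n)}u$ into modes of $u$ acting on $V$, the fact that modes of elements of $C=\mathrm{Com}(W,V)$ commute with the $W$-action and hence (by Schur's lemma) preserve each isotypic component $\widehat{W}_\lambda$, and the identification $\widehat{W}_0\simeq W\otimes C$ on which those modes act only through the second tensor factor. The packaging differs: the paper never projects onto $C$; it takes the ideal $\widehat{\mathcal I}$ generated by $\mathcal I$ in $V$, rewrites it as $\sum_{\lambda,n}\mathcal I_{(n)}\widehat W_\lambda$ via skew-symmetry, and shows directly that $\widehat{\mathcal I}\cap(W\otimes C)=W\otimes\mathcal I$, so that simplicity of $V$ forces $\mathcal I=C$. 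Your projection $E\colon V\twoheadrightarrow C$ is the ``dual'' device and buys nothing extra here, but it costs you an extra hypothesis, as follows.

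The one genuine issue is your use of ``$W$ is simple of CFT type'': that is not among the hypotheses of the proposition ($W$ is only assumed to be a simple vertex (super)subalgebra). You need it twice --- to make the vacuum projection on the $W$-tensor factor canonical, and to verify property (ii), namely that this projection kills $(\omega_W)_{(0)}W=L^W_{-1}W$ (which requires $W$ to be $\Z_{\geq 0}$-graded with $W_0=\C|0\rangle$). Both uses can be removed. For the construction of $E$ and for property (i), any linear splitting $W=\C|0\rangle\oplus W'$ works, since $c_{(m)}$ for $c\in C$ acts on $\widehat W_0\simeq W\otimes C$ only through the second factor. Property (ii) is in fact unnecessary: instead of commuting $E$ past $\partial^j$, use $[\partial,u_{(m)}]=-m\,u_{(m-1)}$ to expand $\partial^j\bigl(u_{(n+j)}a\bigr)$ as a linear combination of terms $u_{(m)}\bigl(\partial^{j'}a\bigr)$ \emph{before} applying $E$; then property (i) alone gives $E(a_{(n)}u)\in\Span\{u_{(m)}c\mid c\in C,\ m\in\Z\}\subset J$. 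With that modification your proof goes through under exactly the stated hypotheses; alternatively, you can simply intersect with $W\otimes C$ as the paper does and avoid the projection altogether.
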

\proof
Take a non-zero ideal $\mathcal{I}\subset \mathrm{Com}(W,V)$ and let $\widehat{\mathcal{I}}\subset V$ denote the ideal generated by $\mathcal{I}$ in $V$. We show 
\begin{align}
\widehat{\mathcal{I}}\cap (W\otimes \mathrm{Com}(W,V))=W\otimes \mathcal{I}.
\end{align}
Indeed, by \cite[Proposition 4.5.6]{LL}, we have
\begin{align}
\widehat{\mathcal{I}}=\mathrm{Span}_{\C} \{a_{(n)}u\mid a\in V,\ u\in \mathcal{I},\ n\in\Z\}.
\end{align}
Then by using the conformal field $Y(\omega,z)=\sum_{n\in \Z} L_nz^{-n-2}$ and the skew-symmetry:
$$Y(a,z)u=-(-1)^{\bar{a}\bar{u}}\mathrm{e}^{z L_{-1}}Y(u,-z)a,$$
we have
\begin{equation*}
\widehat{\mathcal{I}}
=\mathrm{Span}_{\C} \{u_{(n)}a\mid u\in \mathcal{I},\ a\in V,\ n\in\Z\}
=\sum_{\lambda\in \Lambda}\sum_{n\in\Z}\mathcal{I}_{(n)}\widehat{W}_{\lambda}.
\end{equation*}
Since vertex operators $u_{(n)}$ commute with $w_{(m)}$ for all $u \in \mathrm{Com}(W,V)$ and $w \in W$, the actions $u_{(n)}$ on $W_\lambda$ are in $\mathrm{Hom}_W(W_\lambda,V)$. By Schur's lemma, we have $\mathrm{Hom}_W(W_\lambda,W_\mu)=0$ if $\lambda\neq \mu$. Thus, 
$$\mathcal{I}_{(n)}\widehat{W}_{\lambda}\subset (W\otimes \mathrm{Com}(W,V))_{(n)}\widehat{W}_\lambda\subset \widehat{W}_\lambda$$
so that we have
\begin{align*}
\widehat{\mathcal{I}}\cap (W\otimes \mathrm{Com}(W,V))
&=\mathrm{Span}_{\C} \{u_{(n)}a\mid u\in \mathcal{I},\ a\in W\otimes \mathrm{Com}(W,V),\ n\in\Z\}\\
&=W\otimes \mathcal{I}.
\end{align*}
Since $V$ is simple, $\widehat{\mathcal{I}}=V$ and thus $\mathcal{I}=\mathrm{Com}(W,V)$. This shows that $\mathrm{Com}(W,V)$ is simple.
\endproof
Consider the case when $V$ is a simple vertex operator superalgebra and $W$ is a Heisenberg vertex subalgebra generated by primary fields of conformal degree 1. Let $\mathcal{H}\subset W$ be a subspace of primary fields of conformal degree 1 with OPEs
\begin{equation}\label{eq:bilinear}
h_1(z)h_2(w)\sim \frac{\ell(h_1,h_2)}{(z-w)^2},\quad  h_1,h_2\in \mathcal{H}
\end{equation}
for some symmetric bilinear form $\ell: \mathcal{H}\times \mathcal{H}\rightarrow \C$, generating $W$. Suppose that $\ell$ is non-degenerate (i.e. $W$ is simple) and the conditions (1)(2) in Corollary \ref{Heisenberg coset}. Then the assumption in Proposition \ref{simplicity} for the conformal vector is automatically satisfied, and we have
$$\widehat{W}_0=\bigcap_{h\in \mathcal{H}}\mathrm{Ker}(h_{(0)}).$$
The conformal vector $\omega$ lies in this subspace since $h_{(0)}\omega=0$ for $h\in \mathcal{H}$. These imply Proposition 3.2 and Theorem 2.9 in \cite{CKLR}:
\begin{corollary}[{\cite[Proposition 3.2]{CKLR}}]\label{Heisenberg coset}
Let $V$ be a vertex operator superalgebra of countable dimension and $\pi$ a non-degenerate Heisenberg vertex subalgebra generated by a subspace $\mathcal{H}$ spanned by primary fields of conformal degree 1. Suppose the following conditions:
\begin{itemize}
\item[(1)]
The action $\mathcal{H}\times V\rightarrow V$, $((h,a)\mapsto h_{(0)}a)$, is semi-simple,
\item[(2)]
Each $\pi$-submodule spanned by all $a\in V$ satisfying that $h_{(0)}a=\lambda(h)a$ for $h \in \mathcal{H}$ with fixed $\lambda\in \mathcal{H}^*$ is bounded from below by the conformal degree.
\end{itemize}
Then a simple quotient $V\twoheadrightarrow V_s$ induces a simple quotient $\mathrm{Com}(\pi,V)\twoheadrightarrow \mathrm{Com}(\pi,V_s)$. 
\end{corollary}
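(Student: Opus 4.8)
The plan is to specialize the three preceding results to the case $B = \pi$ and assemble them, the genuinely new input being a single module-theoretic fact about non-degenerate Heisenberg algebras. \emph{First} I would record the $\pi$-module decomposition of $V$. Since $\ell$ is non-degenerate, $\pi$ is a simple Heisenberg vertex algebra whose simple bounded-below modules are precisely the Fock modules $\pi_\lambda$, $\lambda \in \mathcal{H}^*$, and these generate a semisimple full subcategory $\mathcal{B}$ of weak $\pi$-modules closed under countable direct sums. Condition (1) yields the joint eigenspace decomposition of $V$ under the commuting zero modes $\{h_{(0)}\}_{h\in\mathcal{H}}$, condition (2) guarantees that the $\pi$-submodule attached to each eigenvalue $\lambda$ is bounded below, and together with non-degeneracy these force
\begin{align*}
V \simeq \bigoplus_{\lambda \in \mathcal{H}^*} \pi_\lambda \otimes C_\lambda, \qquad C_\lambda := \mathrm{Hom}_\pi(\pi_\lambda, V),
\end{align*}
an object of $\mathcal{B}$, with $C_0 = \mathrm{Com}(\pi, V)$ and $\widehat{\pi}_0 = \pi \otimes \mathrm{Com}(\pi, V)$ as in the Lemma preceding Proposition \ref{simplicity}.

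\emph{Second} I would obtain the surjection by applying Proposition \ref{decomposition under quotient} with $B = \pi$, $A = V$ and the simple quotient $A_s = V_s$: it produces a decomposition $V_s \simeq \bigoplus_\lambda \pi_\lambda \otimes D_\lambda$ with each $D_\lambda$ a quotient of $C_\lambda$ and, crucially, $D := \mathrm{Com}(\pi, V_s)$ a quotient of $C = \mathrm{Com}(\pi, V)$. This is exactly the induced surjection $\mathrm{Com}(\pi, V) \twoheadrightarrow \mathrm{Com}(\pi, V_s)$ asserted in the statement.

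\emph{Third} I would prove that the target $\mathrm{Com}(\pi, V_s)$ is simple by invoking Proposition \ref{simplicity} for the simple vertex operator superalgebra $V_s$ and its simple Heisenberg subalgebra $\pi$. The hypotheses are met: $V_s$ is simple and of countable dimension (being a quotient of the countable-dimensional $V$); $\pi$ is simple since $\ell$ is non-degenerate; $V_s$ is semisimple as a $\pi$-module by the second step; and the conformal-vector hypothesis is precisely what the paragraph preceding the statement supplies. Indeed, as $\mathcal{H}$ is spanned by conformal-weight-one primaries and $\ell$ is non-degenerate, $\pi$ carries a Sugawara vector $\omega_\pi$, and writing $\omega = \omega_\pi + (\omega - \omega_\pi)$ exhibits the conformal vector $\omega$ of $V_s$ inside $\widehat{\pi}_0 = \pi \otimes \mathrm{Com}(\pi, V_s)$, because $h_{(0)}\omega = 0$ for every $h \in \mathcal{H}$. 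Proposition \ref{simplicity} then delivers the simplicity of $\mathrm{Com}(\pi, V_s)$.

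\emph{The main obstacle} is the first step. The bookkeeping in the second and third steps is essentially formal once one knows that $V$ lies in the semisimple category $\mathcal{B}$, but verifying this requires showing that the joint $h_{(0)}$-eigenspace decomposition from (1), the boundedness below from (2), and the non-degeneracy of $\ell$ genuinely combine to rule out indecomposable, non-Fock $\pi$-module extensions; this interplay of all three hypotheses is the technical heart of the argument, the assembly of the cited propositions being comparatively routine.
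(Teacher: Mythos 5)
Your proposal is correct and follows essentially the same route as the paper: semisimplicity of $V$ as a $\pi$-module (which the paper obtains by citing \cite[Theorem 1.7.3]{FLM}) gives the Fock-module decomposition, Proposition \ref{decomposition under quotient} with $B=\pi$ yields the induced surjection, and Proposition \ref{simplicity} together with the conformal-vector observation in the paragraph following it gives simplicity of $\mathrm{Com}(\pi,V_s)$. The only difference is presentational: what you flag as the "main obstacle" is dispatched in the paper by the single citation to \cite{FLM}.
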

\proof
By the assumption, $V$ is semi-simple as a $\pi$-module, cf \cite[Theorem 1.7.3]{FLM}. Thus we have
\begin{align}\label{Heisenberg decomposition}
V\simeq \bigoplus_{\lambda\in \mathcal{H}^*}\pi_\lambda\otimes C_\lambda,\quad C_\lambda:=\left\{a\in V\mid h_{(n)}a=\delta_{n,0}\lambda(h)a,\ \forall h\in \mathcal{H}, n\geq0\right\}
\end{align}
as $\pi\otimes \mathrm{Com}(\pi, V)$-modules, where $\pi_\lambda$ is the highest weight $\pi$-module with the highest weight $\lambda$. Then Proposition \ref{decomposition under quotient} applies and we obtain a quotient 
$$\mathrm{Com}(\pi,V)\twoheadrightarrow \mathrm{Com}(\pi,V_s),$$
induced by $V\rightarrow V_s$. By the argument just after Proposition \ref{simplicity}, Proposition \ref{simplicity} applies to conclude that $\mathrm{Com}(\pi,V_s)$ is simple.
\endproof

\begin{corollary}[{\cite[Theorem 2.9]{CKLR}}]
Suppose that $V$ is simple in addition to the assumptions in Corollary \ref{Heisenberg coset}. Then each $C_\lambda$ appearing in the decomposition \eqref{Heisenberg decomposition} is a simple $\mathrm{Com}(\pi,V)$-module. \end{corollary}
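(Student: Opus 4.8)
The plan is to show that every nonzero $D$-submodule of $C_\lambda$, where $D:=\mathrm{Com}(\pi,V)$, is all of $C_\lambda$, by combining the simplicity of $V$ with the charge grading furnished by $\mathcal H$. Recall from \eqref{Heisenberg decomposition} that $V=\bigoplus_{\mu\in\mathcal H^*}\pi_\mu\otimes C_\mu$, where $\pi_\mu\otimes C_\mu$ is exactly the joint eigenspace on which each $h_{(0)}$, $h\in\mathcal H$, acts by $\mu(h)$; call $\mu$ the charge. Since $[h_{(0)},b_{(n)}]=(h_{(0)}b)_{(n)}$, the operator $h_{(0)}$ is a derivation of every $n$-th product, so charges are additive: $a_{(n)}a'$ has charge $\mu+\mu'$ whenever $a,a'$ have charges $\mu,\mu'$. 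Note that the charge-$0$ subspace is $\widehat{W}_0=\pi\otimes D$ by the Lemma preceding Proposition \ref{simplicity}.

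First I would fix a nonzero $D$-submodule $N\subseteq C_\lambda$ and form the $\pi\otimes D$-submodule $\pi_\lambda\otimes N\subseteq \pi_\lambda\otimes C_\lambda\subseteq V$. Let $\widehat{N}\subseteq V$ be the ideal of $V$ generated by $\pi_\lambda\otimes N$. Since $\pi_\lambda\otimes N\neq 0$ and $V$ is simple, $\widehat{N}=V$. By \cite[Proposition 4.5.6]{LL} together with the skew-symmetry used in the proof of Proposition \ref{simplicity}, one has $\widehat{N}=\mathrm{Span}_{\C}\{u_{(n)}a\mid u\in\pi_\lambda\otimes N,\ a\in V,\ n\in\Z\}$.

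The key step is to identify the charge-$\lambda$ component of $\widehat{N}$. Writing $a=\sum_{\mu}a_\mu$ with $a_\mu$ of charge $\mu$, additivity of charges shows that $u_{(n)}a_\mu$ has charge $\lambda+\mu$, so only the charge-$0$ summand contributes to the charge-$\lambda$ (equivalently, the $\pi_\lambda$-isotypic) part. It therefore suffices to show $u_{(n)}a_0\in\pi_\lambda\otimes N$ for $u\in\pi_\lambda\otimes N$ and $a_0\in\widehat{W}_0=\pi\otimes D$. Applying skew-symmetry once more rewrites $u_{(n)}a_0$ as a finite sum of $L_{-1}$-derivatives of products $(a_0)_{(m)}u$. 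Since $a_0$ lies in the subalgebra $\widehat{W}_0$ and $\pi_\lambda\otimes C_\lambda$ is a tensor-product $\pi\otimes D$-module, $Y(a_0,z)$ acts factorwise; because $N$ is a $D$-submodule and $L_{-1}$ preserves $\pi_\lambda\otimes N$, each such product lands in $\pi_\lambda\otimes N$.

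Combining the two computations, the charge-$\lambda$ component of $\widehat{N}=V$ is contained in $\pi_\lambda\otimes N$; but that component equals $\pi_\lambda\otimes C_\lambda$ by the decomposition, forcing $N=C_\lambda$. Hence $C_\lambda$ is a simple $D$-module. I expect the main obstacle to be the bookkeeping in the third paragraph: one must verify carefully that the $\pi\otimes D$-action on the isotypic block $\pi_\lambda\otimes C_\lambda$ genuinely factors through the two tensor factors—so that the $D$-action preserves $N$—and that the skew-symmetry manipulations, and the $L_{-1}$-derivatives they produce, do not escape $\pi_\lambda\otimes N$. Once the charge grading is in place, the simplicity of $V$ does the rest; note that simplicity of $D$ itself (already available from Corollary \ref{Heisenberg coset} via Proposition \ref{simplicity}) is not needed here.
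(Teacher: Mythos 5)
Your proposal is correct and follows essentially the same route as the paper: take the ideal generated by $\pi_\lambda\otimes N$, use simplicity of $V$ together with the span formula of \cite[Proposition 4.5.6]{LL} and skew-symmetry, and then use additivity of Heisenberg charge to isolate the charge-$\lambda$ component and conclude it equals $\pi_\lambda\otimes N$. Your third paragraph merely spells out in more detail the step the paper disposes of by citing "the same argument as in the proof of Proposition \ref{simplicity}", and your concluding remark that simplicity of $D$ is not needed is consistent with the paper's argument.
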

\begin{proof}
First, observe that operator product algebra of $V$ respects Heisenberg weight, since every vertex operator of $V$ is especially an intertwiner for the Heisenberg subalgebra. This means that $v_{(n)}w \in \widehat W_{\lambda+\mu} =\pi_{\lambda+\mu}\otimes C_{\lambda+\mu}$ for $v \in  \widehat W_\lambda = \pi_\lambda\otimes C_\lambda$ and $w \in \widehat W_\mu = \pi_\mu\otimes C_\mu$. Now, fix a $\widehat W_\nu$ appearing in the decomposition of $V$ and let $ \pi_\nu \otimes \mathcal I \subset \widehat W_\nu$ be a $\pi \otimes \mathrm{Com}(\pi,V)$-submodule of $\widehat W_\nu$. Denote by $\widehat{\mathcal I}$ the ideal generated by $\pi_\nu \otimes \mathcal I$ in $V$. Then by the same argument and same notation as in the proof of Proposition \ref{simplicity}, we have
\begin{equation*}
\widehat{\mathcal{I}}
=\mathrm{Span}_{\C} \{u_{(n)}a\mid u\in \pi_\nu \otimes \mathcal{I},\ a\in V,\ n\in\Z\}
=\sum_{\lambda\in \mathcal H^*}\sum_{n\in\Z} (\pi_\nu \otimes \mathcal{I})_{(n)}\widehat{W}_{\lambda}.
\end{equation*}
But we just explained that 
$$(\pi_\nu \otimes\mathcal{I})_{(n)}\widehat{W}_{\lambda}\subset (\widehat W_\nu)_{(n)}\widehat{W}_\lambda\subset \widehat{W}_{\nu+\lambda}.$$
Therefore,
\begin{align*}
\widehat{\mathcal{I}}\cap \widehat W_{\lambda}
&=\mathrm{Span}_{\C} \{u_{(n)}a\mid u\in \pi_\nu \otimes \mathcal{I},\ a\in \pi \otimes C,\ n\in\Z\}
=  \pi_\nu \otimes \mathcal{I}.
\end{align*}
Since $V$ is simple, we have $\widehat{ \mathcal I} =V$ and thus $\mathcal I = C_\lambda$, that is, $C_\lambda$ is simple as a $\mathrm{Com}(\pi,V)$-module. 
\end{proof}

\subsection{$C_2$-cofiniteness of Kazama-Suzuki cosets}\label{sec:C2}

The aim of this subsection is to study if the $C_2$-cofiniteness is inherited via a Kazama-Suzuki type coset construction. The proof follows essentially from results in \cite{CKLR}. The main issue is to understand whether double commutants of Heisenberg cosets are lattice vertex operator algebras. 

Recall the decomposition \eqref{Heisenberg decomposition} as a $\pi\otimes \mathrm{Com}(\pi,V)$-module. We remark that the set $L = \{ \lambda \in \mathcal H^* | C_\lambda \neq  0 \}$ is a lattice if $V$ is simple, see \cite[Proposition 3.6]{LX} (statements in \cite{LX} are stated for vertex operator algebras, but exactly the same proofs apply for vertex operator superalgebras).

\begin{assumption}\label{ass:real}
Under the set-up of Corollary \ref{Heisenberg coset} for a simple vertex operator superalgebra $V$, let $L = \{ \lambda \in \mathcal H^* | C_\lambda \neq  0 \}$ be the lattice of weights appearing in $V$. Then we assume the following conditions for a basis $B= \{ h_1, \dots, h_n\}$ of $\mathcal{H}$:
\begin{enumerate}
\item The bilinear form $\ell$ in \eqref{eq:bilinear} satisfies $\ell(h_i, h_j)\in\mathbb R$ for all $i,j$.
\item $\lambda(h_i)\in\mathbb R$ for all $i$ and $\lambda\in L$.
\item $L \otimes_{\mathbb Z}\mathbb C = \mathcal H^*$. 
\end{enumerate}
\end{assumption}
This reality assumption insures the existence of vertex tensor category of highest weight $\pi$-modules. Invoking the proof of \cite[Theorem 3.5]{CKLR}, we obtain the following theorem:
\begin{lemma}\label{lemma:C2}
Let $V$ be a simple vertex operator superalgebra of countable dimension with a rank $n$ Heisenberg subalgebra $\pi$ acting semi-simply on $V$ and denote by $C=\mathrm{Com}(\pi, V)$ the coset. If $C$ is a $C_2$-cofinite vertex operator algebra of CFT-type and  Assumption \ref{ass:real} holds, then $\mathrm{Com}(C, V)$ is a rank $n$ lattice vertex operator superalgebra $V_N$. Moreover, if the lattice $N$ is positive definite, $V$ is $C_2$-cofinite. 
\end{lemma}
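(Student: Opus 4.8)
The plan is to deduce the first assertion from the proof of \cite[Theorem 3.5]{CKLR} and then feed its output into a Miyamoto-type argument for $C_2$-cofiniteness. By Corollary \ref{Heisenberg coset} and the decomposition \eqref{Heisenberg decomposition}, $V \simeq \bigoplus_{\lambda \in L} \pi_\lambda \otimes C_\lambda$ as $\pi \otimes C$-modules, where each $C_\lambda$ is a simple $C$-module and $L = \{\lambda \in \mathcal{H}^* : C_\lambda \neq 0\}$ is a lattice, of full rank $n$ by Assumption \ref{ass:real}(3). First I would identify the double commutant directly. An element of $V$ lies in $\mathrm{Com}(C, V)$ precisely when it is a vacuum-like vector for $C$; since $\pi$ acts trivially on the factors $C_\lambda$, the Heisenberg weight-$\lambda$ part of $\mathrm{Com}(C, V)$ equals $\pi_\lambda \otimes (C_\lambda)^{\mathrm{vl}}$, where $(C_\lambda)^{\mathrm{vl}}$ is the space of vacuum-like vectors of $C_\lambda$. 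By Lemma \ref{vacuum-like vectors} this space is isomorphic to $\mathrm{Hom}_C(C, C_\lambda)$, which by Schur's lemma (applicable since everything is of countable dimension) is one-dimensional if $C_\lambda \simeq C$ and zero otherwise. Hence
\[
\mathrm{Com}(C, V) = \bigoplus_{\lambda \in N} \pi_\lambda, \qquad N := \{\lambda \in L : C_\lambda \simeq C\}.
\]

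Next I would show that $N$ is a full-rank sublattice and that this sum carries the structure of the lattice vertex operator superalgebra $V_N$. The reality Assumption \ref{ass:real} guarantees a vertex tensor category of highest-weight $\pi$-modules, and the $C_2$-cofiniteness of $C$ forces $C$ to have only finitely many simple modules. Since $V$ is simple, one shows as in \cite[Theorem 3.5]{CKLR} that the $C_\lambda$ are simple currents with $C_\lambda \boxtimes C_\mu \simeq C_{\lambda+\mu}$, so $\lambda \mapsto [C_\lambda]$ is a group homomorphism from $L$ into the finite group of simple currents of $C$; its kernel is exactly $N$, which therefore has finite index in $L$ and hence rank $n$. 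That $\bigoplus_{\lambda \in N} \pi_\lambda$ is then the lattice vertex operator superalgebra $V_N$ is the content of \cite[Theorem 3.5]{CKLR}, the only modification being to track the parity; this proves the first claim.

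For the final statement I would use that a positive definite lattice vertex operator superalgebra is $C_2$-cofinite. Regrouping \eqref{Heisenberg decomposition} along the cosets of $N$ in $L$ yields a finite decomposition
\[
V \simeq \bigoplus_{\mu + N \in L/N} V_{N + \mu} \otimes C_\mu
\]
of $V$ as a $V_N \otimes C$-module, using that $C_\lambda \simeq C_\mu$ for $\lambda \in \mu + N$ (fusion with the currents $C_\nu \simeq C$, $\nu \in N$). If $N$ is positive definite, then $V_N$ is $C_2$-cofinite, so $V_N \otimes C$ is $C_2$-cofinite and each summand is a $C_2$-cofinite module over it; since $L/N$ is finite, $V$ is $C_2$-cofinite by \cite[Lemma 4.6]{CKLR} (based on \cite{Mi}).

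The hard part will be the middle step, namely showing that $N$ is genuinely of full rank. This is where both hypotheses must be combined: the reality assumption to obtain a well-behaved fusion product on the Fock modules, and the $C_2$-cofiniteness of $C$ to force the simple-current group to be finite, so that $\lambda \mapsto [C_\lambda]$ has finite image. Verifying that the $C_\lambda$ are indeed simple currents and that the abstract sum $\bigoplus_{\lambda \in N}\pi_\lambda$ inherits exactly the lattice vertex superalgebra structure is the technical core imported from \cite[Theorem 3.5]{CKLR}.
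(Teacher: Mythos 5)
Your proof is correct and follows essentially the same route as the paper: both rest on the machinery of \cite[Theorem 3.5]{CKLR} (made available by the $C_2$-cofiniteness of $C$ and Assumption \ref{ass:real}) to produce the sublattice $N$, use the finiteness of the set of simple $C$-modules to conclude that $L/N$ is finite and hence $N$ has full rank, and deduce $C_2$-cofiniteness of $V$ from the resulting finite decomposition over $V_N \otimes C$. Your explicit identification of $\mathrm{Com}(C,V)$ via vacuum-like vectors and Schur's lemma is a nice self-contained supplement to what the paper simply reads off from the decomposition \eqref{eq:extensionV}.
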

\begin{proof}
Since $C$ is a $C_2$-cofinite vertex operator algebra of CFT-type, \cite[Proposition 4.1, Theorem 4.13]{Hu3} insure the assumption just after \cite[Proposition 3.2]{CKLR} for the existence of the vertex tensor category of (generalized) $C$-modules. By \cite[Theorem 2.3]{CKLR}, we also have a vertex tensor category generated by $\{\pi_\lambda \}_{\lambda\in L}$. By \cite[Theorem 5.5]{CKM2}, the vertex tensor category of the tensor product $\pi \otimes C$ is the Deligne product of these two categories. Thus the same arguments in Section 3 in \cite{CKLR} for the proof of \cite[Theorem 3.5]{CKLR} may apply. This guarantees that
\begin{align*}
N = \{ \lambda \in L \mid C_\lambda \simeq C\ \text{as} \ C\text{-modules} \}
\end{align*}
is a sublattice of $L$ and $C_\mu \simeq C_{\mu+\lambda}$ for all $\lambda\in N$ and $\mu \in L$. By setting $C_{[\mu]}:= C_\mu$ for $[\mu]\in L/N$, we have the following decomposition  \cite[(3.7)]{CKLR}:
\begin{equation}\label{eq:extensionV}
V \simeq \bigoplus_{[\mu] \in L/N}  V_{[\mu]} \otimes C_{[\mu]}, \qquad V_{[\mu]} = \bigoplus_{\lambda \in N} \pi_{\mu+\lambda}.
\end{equation}
Especially, $\mathrm{Com}(C, V)=V_{[0]}=V_N$ is the lattice vertex operator superalgebra of the lattice $N$. Again by the same arguments for the proof of \cite[Theorem 3.5]{CKLR}, the $C$-modules $C_{[\mu]}$, ($[\mu]\in L/N$), are mutually inequivalent and simple. Since $C$ is $C_2$-cofinite, $C$ has only finitely many inequivalent simple ordinary modules by \cite[Theorem 3.13]{L4}. Thus $L/N$ is a finite set. Therefore $N$ is a full rank sublattice of $L$.

Now we assume that $N$ is positive definite so that $V_N$ is especially $C_2$-cofinite and of CFT-type. Then $V$ is an extension \eqref{eq:extensionV} of the $C_2$-cofinite vertex operator superalgebra $C \otimes V_N$ of CFT-type by finitely many simple $C\otimes V_N$-modules. By \cite[Proposition 5.2]{ABD}, each $V_{[\mu]} \otimes C_{[\mu]}$ is already $C_2$-cofinite as a simple $C\otimes V_N$-module and thus $V$ itself is $C_2$-cofinite.
\end{proof}
To apply this lemma to our setting, let us introduce a few Heisenberg vertex operator algebras. 
Let $\mathfrak{c}=\C\alpha\oplus \C\beta$ be a $\C$-vector space equipped with a symmetric bilinear form $(\text{-}|\text{-}):\mathfrak{c} \otimes \mathfrak{c} \rightarrow \C$ determined by $(\alpha|\alpha)=q\in \mathbb{Q}_{>0}$, $(\beta|\beta)=r\in \mathbb{Z}\setminus\{0\}$ and $(\alpha|\beta)=0$. We assume $q+r\neq0$.
Then the elements $\gamma=(\alpha+\beta)/(q+r)$ and $\mu=(r \alpha-q\beta)/(q+r)$ span $\mathfrak{c}$ and satisfy 
$$(\gamma|\gamma)=\frac{1}{q+r},\quad (\mu|\mu)=\frac{q r}{q+r},\quad (\gamma|\mu)=0.$$
For $\eta\in\{\alpha, \beta, \gamma,\mu\}$, let $\pi_\eta$ denote the  Heisenberg vertex subalgebra of $\pi_\mathfrak{c}$ generated by $\eta$, and $\pi_{\eta,n}$ the $\pi_\eta$-module $\pi_{\eta,n\eta}$ for simplicity.
Then we have
\begin{align}\label{change of basis}
\pi_{\alpha,n}\otimes \pi_{\beta,m}\simeq \pi_{\gamma,qn+rm}\otimes \pi_{\mu,n-m}
\end{align}
as $\pi_{\gamma}\otimes \pi_{\mu}$-modules.
Now, let $W$ be a simple vertex operator superalgebra containing $\pi_\alpha$ such that
\[
W \simeq  \bigoplus_{n \in \mathbb Z} \pi_{\alpha,n} \otimes D_n, 
\] 
as $\pi_\alpha \otimes D$-modules. Here $D=D_0= \mathrm{Com}(\pi_\alpha, W)$ and $D_n$ is a $D$-module. Let $V_{\Z\beta}$ denote the lattice vertex operator superalgebra associated with the lattice $\Z \beta$ and consider $W \otimes V_{\mathbb Z\beta }$ and the coset
\begin{align*}
C_0 = \mathrm{Com}(\pi_\gamma, W \otimes V_{\mathbb Z \beta}),
\end{align*}
where we consider an embedding $\pi_\gamma \hookrightarrow \pi_\alpha \otimes \pi_\beta \subset W \otimes V_{\mathbb Z \beta}$. Then
\begin{equation}\nonumber
\begin{split}
D &= \mathrm{Com}(\pi_\alpha \otimes \pi_\beta, W \otimes V_{\mathbb Z \beta })=  \mathrm{Com}(\pi_\mu \otimes \pi_\gamma, W \otimes V_{\mathbb Z \beta })\\ &=    
  \mathrm{Com}(\pi_\mu,  \mathrm{Com}(\pi_\gamma, W \otimes V_{\mathbb Z \beta }))  =  \mathrm{Com}(\pi_\mu, C_0).
\end{split}
\end{equation}
If $C_0$ is a $C_2$-cofinite vertex operator algebra of CFT-type, the assumptions of Lemma \ref{lemma:C2} for $V = W \otimes V_{\mathbb Z \beta }$ and $C = C_0$ are satisfied. Thus, we have
\begin{align}\label{eq:WV-dec1}
W \otimes V_{\Z \beta}\simeq \bigoplus_{[\lambda] \in L/N} V_{[\lambda]} \otimes C_{[\lambda]}
\end{align}
with $L=\Z\,q\gamma + \Z\,r\gamma$ and a certain sublattice $N$ of $L$. By setting $q=q_2/q_1$ with $q_1,\in \Z_{>0}$, $q_2\in \Z\setminus \{0\}$ such that $(q_1,q_2)=1$, we have $\Z\,q + \Z\,r=\Z\,(g/q_1) $ with $g = (q_2, r)$. Thus $L=\Z\,(g/q_1)\gamma$ and there exists $a\in \Z\setminus\{0\}$ such that $N=\Z\,(a g/q_1)\gamma$. 
\begin{lemma}\label{lem:C2lat}
Let $C_0$, $D$ be vertex algebras given in the above and suppose that $C_0$ is a $C_2$-cofinite vertex operator algebra of CFT-type. Then $\mathrm{Com}(D, C_0)$ is a lattice vertex algebra of some positive-definite even lattice with rank one.
\end{lemma}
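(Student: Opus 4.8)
The plan is to identify $\mathrm{Com}(D,C_0)$ explicitly as a charge-graded sum of Fock modules over $\pi_\mu$, and then to check that the charges which occur form a rank-one positive-definite even lattice. First I would record the $\pi_\mu\otimes D$-module structure of $C_0$. Combining $W\simeq\bigoplus_{n\in\Z}\pi_{\alpha,n}\otimes D_n$ with $V_{\Z\beta}\simeq\bigoplus_{m\in\Z}\pi_{\beta,m}$ and the change of basis \eqref{change of basis} gives
\[
W\otimes V_{\Z\beta}\simeq\bigoplus_{n,m\in\Z}\pi_{\gamma,qn+rm}\otimes\pi_{\mu,n-m}\otimes D_n ,
\]
and passing to $\pi_\gamma$-vacuum-like vectors (the commutant of $\pi_\gamma$) retains exactly the summands with $qn+rm=0$, so that
\[
C_0\simeq\bigoplus_{qn+rm=0}\pi_{\mu,n-m}\otimes D_n
\]
as $\pi_\mu\otimes D$-modules. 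Since $W$ is simple, $D=\mathrm{Com}(\pi_\alpha,W)$ is simple and each $D_n$ is a simple $D$-module by \cite[Theorem 2.9]{CKLR} (the Corollary following Proposition \ref{simplicity}).

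Second, I would compute the commutant of $D$. A vector of $\pi_{\mu,n-m}\otimes D_n$ lies in $\mathrm{Com}(D,C_0)$ precisely when its $D_n$-component is $D$-vacuum-like; since $D$ and $D_n$ are simple, such a nonzero vector exists (and then spans a one-dimensional space) if and only if $D_n\simeq D$. Setting $M:=\{n\in\Z\mid D_n\simeq D\}$, which is a subgroup of $\Z$ because it is the charge support of the vertex subalgebra $\mathrm{Com}(D,W)=\bigoplus_{n\in M}\pi_{\alpha,n}$, and noting that $qn+rm=0$ determines $m$ from $n$ while $n\mapsto n-m$ is injective, I obtain $\mathrm{Com}(D,C_0)\simeq\bigoplus_{j}\pi_{\mu,j}$, the sum running over the charges $j=\tfrac{q+r}{r}n$ with $n\in M$ and $-qn/r\in\Z$. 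These form a subgroup $M_E$ of $\C\mu$ of rank at most one, so $\mathrm{Com}(D,C_0)$ is the lattice vertex algebra $V_{M_E}$.

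Third—and this is the only step that genuinely uses the hypothesis on $C_0$—I would show $M_E$ has rank exactly one, i.e. $M\ne\{0\}$. Because $C_0$ is $C_2$-cofinite of CFT-type, Lemma \ref{lemma:C2} applies to the triple $(W\otimes V_{\Z\beta},\pi_\gamma,C_0)$ and produces the decomposition \eqref{eq:WV-dec1} together with $\mathrm{Com}(C_0,W\otimes V_{\Z\beta})=V_N$ for a full-rank sublattice $N\subseteq L$. Intersecting commutants and using $\pi_\mu,D\subseteq C_0$ gives
\[
V_N=\mathrm{Com}(C_0,W\otimes V_{\Z\beta})\subseteq\mathrm{Com}\big(\langle\pi_\mu,D\rangle,W\otimes V_{\Z\beta}\big)=\bigoplus_{n\in M}\pi_{\gamma,(q+r)n},
\]
so that $N\subseteq\{(q+r)n\gamma\mid n\in M\}$; as $N$ is full rank this forces $M\ne\{0\}$, hence $M_E$ has rank one. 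I expect this full-rank statement to be the main obstacle: the decomposition and the commutant computation are essentially bookkeeping, but without the $C_2$-cofiniteness of $C_0$ (which produces the finite quotient $L/N$ and thus full-rank $N$) one cannot exclude that only the zero charge survives in $\mathrm{Com}(D,C_0)$.

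Finally, positive-definiteness and evenness are soft consequences of $\mathrm{Com}(D,C_0)\subseteq C_0$. Each occurring summand $\pi_{\mu,j}$ has lowest conformal weight $\tfrac12 j^2(\mu|\mu)$, which must be nonnegative since $C_0$ is of CFT-type; as nonzero charges occur this forces $(\mu|\mu)>0$, i.e. $M_E$ is positive-definite, and integrality of the conformal weights of $C_0$ forces the generator of $M_E$ to have even norm. Thus $\mathrm{Com}(D,C_0)\simeq V_{M_E}$ is the lattice vertex algebra of a rank-one positive-definite even lattice, as claimed.
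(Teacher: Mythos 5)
Your proof is correct and follows essentially the same route as the paper's: the same $\pi_\gamma\otimes\pi_\mu\otimes D$-decomposition of $W\otimes V_{\Z\beta}$, the same appeal to Lemma \ref{lemma:C2} to produce the full-rank sublattice $N$ with $\mathrm{Com}(C_0,W\otimes V_{\Z\beta})=V_N$, and the same CFT-type argument forcing $(\mu|\mu)>0$. The only (cosmetic) difference is how nontriviality of the charge support is extracted from $N$: you intersect commutants via $V_N\subseteq\mathrm{Com}\left(\langle\pi_\mu,D\rangle,W\otimes V_{\Z\beta}\right)$, whereas the paper compares the decompositions of the isomorphic modules $C_{[\lambda]}$, $\lambda\in N$, to obtain the periodicity $D_n\simeq D_{n+a_0 s}$; both are valid.
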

\begin{proof}
Since $C_0$ is a vertex operator algebra and $D = \mathrm{Com}(\pi_\mu, C_0)$, the double commutant $\operatorname{Com}(D, C_0)$ is either $\pi_\mu$ or a larger vertex operator algebra that extends $\pi_\mu$. We have
\[
W \otimes V_{\mathbb Z\beta } \simeq  \bigoplus_{n, m \in \mathbb Z} \pi_{\alpha,n} \otimes \pi_{\beta,m} \otimes D_n 
\simeq  \bigoplus_{n, m \in \mathbb Z} \pi_{\gamma,qn+rm} \otimes \pi_{\mu,n-m} \otimes D_n.
\] 
On the other hand, we have the decomposition \eqref{eq:WV-dec1} with
\[
\operatorname{Com}(C_0, W \otimes V_{\mathbb Z\beta }) = V_N = \bigoplus_{s \in \Z}\pi_{\gamma, (a g/q_1)s}.
\]
For any $s \in \Z$, $\pi_{\gamma,qn+rm} \simeq \pi_{\gamma, (a g/q_1)s}$ if and only if $qn+rm = (a g/q_1)s$. Thus, it follows that
\[
C_0 \simeq \bigoplus_{\substack{n, m \in \mathbb Z\\ qn + rm = (a g/q_1)s}}  \pi_{\mu, n-m} \otimes D_n \simeq \bigoplus_{\substack{n, m \in \mathbb Z\\ qn + rm =0}}  \pi_{\mu, n-m} \otimes D_{n}
\]
and hence $D_n \simeq D_{n+a_0s}$, ($s\in \mathbb Z$) for some $a_0\in\mathbb{Z}\backslash \{0\}$. This implies that 
\begin{align*}
\operatorname{Com}(D, C_0)
\simeq \bigoplus_{\substack{n \in  \Z a_0, m \in \mathbb Z\\ qn + rm =0}}  \pi_{\mu, n-m} 
= \bigoplus_{\substack{s, m \in \mathbb Z\\ a_0qs + rm =0}}  \pi_{\mu, a_0s-m}   
\simeq \bigoplus_{s \in \mathbb Z}  \pi_{\mu,bs}
\end{align*}
for some $b\in \mathbb{Q}\backslash \{0\}$. This must be an even lattice vertex operator algebra, since it is a vertex algebra extension of a Heisenberg algebra. From the assumption, we have $(\mu|\mu)\in \mathbb{Q}\backslash\{0\}$. If $(\mu|\mu)<0$, then the lattice has negative signature and thus conformal weights of the lattice vertex operator algebra are not bounded from below. This is impossible for the coset of a vertex operator algebra of CFT-type. Hence, $(\mu|\mu)$ must be positive.
\end{proof}
\begin{corollary} \label{cor:C2}
With the same set-up as in Lemma \ref{lem:C2lat}, $D$ is $C_2$-cofinite.
\begin{proof}
The assertion is immediate from \cite[Corollary 2]{Mi}.
\end{proof}
\end{corollary}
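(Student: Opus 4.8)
The plan is to reduce the statement to the general $C_2$-cofiniteness criterion for commutants of lattice vertex operator algebras, namely \cite[Corollary 2]{Mi}, using the structural information already extracted in Lemma \ref{lem:C2lat}. First I would record what that lemma produces: the double commutant $U := \mathrm{Com}(D, C_0)$ is the lattice vertex operator algebra $V_M$ attached to a positive-definite even lattice $M$ of rank one. Since $M$ is positive definite and even, $V_M$ is of CFT-type, rational and $C_2$-cofinite by the standard theory of lattice vertex operator algebras. This is precisely the hypothesis that makes Miyamoto's criterion available, and it explains why positive-definiteness, rather than mere non-degeneracy, had to be established in Lemma \ref{lem:C2lat}.

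Next I would identify $D$ as the commutant of $V_M$, not merely of $\pi_\mu$. Because $\pi_\mu \subseteq V_M \subseteq C_0$, any field commuting with $V_M$ commutes with $\pi_\mu$, so $\mathrm{Com}(V_M, C_0) \subseteq \mathrm{Com}(\pi_\mu, C_0) = D$; conversely $V_M = \mathrm{Com}(D, C_0)$ means that every element of $D$ already commutes with all of $V_M$, whence $D \subseteq \mathrm{Com}(V_M, C_0)$. Thus $D = \mathrm{Com}(V_M, C_0)$, and $(V_M, D)$ is a commuting pair of sub-vertex-algebras of $C_0$ in which $V_M$ is rational and $C_2$-cofinite.

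With this in hand the conclusion is formal. We have a $C_2$-cofinite vertex operator algebra $C_0$ of CFT-type containing the positive-definite, and hence rational and $C_2$-cofinite, lattice vertex operator algebra $V_M$ as a sub-vertex-algebra, together with the identification $D = \mathrm{Com}(V_M, C_0)$. Rationality of $V_M$ forces only finitely many simple $V_M$-modules to occur in $C_0$, so that $C_0$ is a finite sum of $V_M$-isotypic blocks; this is exactly the input that \cite[Corollary 2]{Mi} requires in order to conclude that the commutant $\mathrm{Com}(V_M, C_0) = D$ is $C_2$-cofinite. I would therefore simply invoke \cite[Corollary 2]{Mi} with $V = C_0$ and lattice subalgebra $V_M$.

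The genuine difficulty in this argument does not lie in the present corollary but has already been surmounted in Lemma \ref{lem:C2lat}: the delicate point is that $\mathrm{Com}(D, C_0)$ is a \emph{positive-definite} lattice vertex operator algebra. Were the relevant direction $\mu$ isotropic or of negative norm, $V_M$ would fail to be $C_2$-cofinite — indeed its conformal weights would be unbounded below — and Miyamoto's criterion could not be applied. The sign computation $(\mu|\mu) > 0$ carried out there is what ultimately licenses the whole reduction, after which the remaining step is a direct citation.
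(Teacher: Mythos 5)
Your proposal is correct and follows the same route as the paper, whose entire proof is the one-line citation of \cite[Corollary 2]{Mi}; your extra steps (identifying $D=\mathrm{Com}(V_M,C_0)$ via the double-commutant inclusion and noting that positive-definiteness of the lattice from Lemma \ref{lem:C2lat} is what makes $V_M$ rational and $C_2$-cofinite) are just a correct unpacking of why that citation applies.
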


\subsection{Rationality of Heisenberg cosets}\label{sec:rational}

In general it is an open and difficult problem to prove that the coset of a rational and $C_2$-cofinite vertex operator algebra is itself rational and $C_2$-cofinite. If one is considering the coset by a Heisenberg vertex algebra, then this problem can be solved. This follows from  two Theorems. Firstly a result of Geoffrey Mason
that ensures that the Heisenberg subalgebras inside rational and $C_2$-cofinite vertex operator algebras extend to lattice vertex algebras \cite{Mason}. Secondly,  Masahiko Miyamoto proved a $C_2$-cofiniteness result for finite abelian orbifolds \cite{Mi}. These two results are sufficient to ensure a rationality and $C_2$-cofiniteness result of Heisenberg cosets \cite[Section 4.3]{CKLR} (an alternative argument is \cite{CaM}). We will now explain these results and then generalize them to Heisenberg cosets of vertex operator superalgebras. 

Heisenberg cosets of strongly rational vertex operator algebras are strongly rational in the following sense:
\begin{theorem}\label{thm:rationality} Let $V$ be a simple, rational, $C_2$-cofinite vertex operator algebra of CFT-type. Suppose that $U \subset H \subset V$ where $H$ is a Cartan subalgebra of $V$ and $U$ is a nondegenerate subspace, so that $\pi_U$ is a simple Heisenberg vertex operator algebra of rank the dimension of $U$. Set $C = \mathrm{Com}(\pi_U, V)$. Then
\begin{enumerate}
\item $C$ is simple and of CFT-type.
\item $C$ is $C_2$-cofinite.
\item Every grading-restricted generalized $C$-module is completely reducible.
\end{enumerate}
\begin{proof}
First, we show (1). Let $\omega_U, \omega_V$ be the conformal vectors of $\pi_U, V$ respectively. Then $\omega_V-\omega_U$ is the conformal vector of $C$. Since $(\omega_V)_{(n)} = (\omega_V-\omega_U)_{(n)}$ on $C$ for $n \in \Z_{\geq 0}$, each homogeneous subspace of $C$ with conformal degree $m$ is in that of $V$ with the same degree. This proves that $C$ is of CFT-type. The simplicity of $C$ is immediate from Corollary \ref{Heisenberg coset}. Then (1) is proven.

Next, we show (2) and (3). By \cite[Theorem 1]{Mason}, there is a unique maximal vertex subalgebra $W \subset V$ with the conformal vector $\omega_U$, which is indeed isomorphic to a lattice vertex operator algebra $V_\Lambda$ for a positive-definite even lattice $\Lambda \subset U$ with $\dim U = \operatorname{rank} \Lambda$. This implies that $\operatorname{Com}(C, V) = V_\Lambda$. Then (2), (3) follow from \cite[Corollary 2]{Mi}, \cite[Theorem 4.12]{CKLR} respectively. This completes the proof.
\end{proof} 
\end{theorem}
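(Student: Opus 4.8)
The plan is to separate the three assertions into two groups: the structural claim (1) follows from the general Heisenberg-coset machinery already assembled in this section, while the two finiteness statements (2) and (3) hinge on first identifying the double commutant $\mathrm{Com}(C,V)$ as a lattice vertex operator algebra and then invoking orbifold and coset transfer results.

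First I would dispose of (1). The conformal vector of $C$ is $\omega_V-\omega_U$, where $\omega_V,\omega_U$ denote the conformal vectors of $V$ and of $\pi_U$; because the nonnegative modes of $\omega_V$ and of $\omega_V-\omega_U$ agree on $C$, the $L_0$-grading of $C$ is inherited from that of $V$, so $C$ is of CFT-type. For simplicity I would check the hypotheses of Corollary \ref{Heisenberg coset} with $\pi=\pi_U$: since $V$ is rational and $C_2$-cofinite, it decomposes semisimply under the Heisenberg zero modes and each fixed-weight subspace is bounded below in conformal degree. As $V$ is itself simple, Corollary \ref{Heisenberg coset} then immediately yields that $C=\mathrm{Com}(\pi_U,V)$ is simple.

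The heart of the matter is (2) and (3), and here the decisive input is Mason's theorem \cite[Theorem 1]{Mason}: inside the strongly rational $V$ the Heisenberg $\pi_U$ sits in a unique maximal vertex subalgebra $W$ sharing its conformal vector $\omega_U$, and $W\simeq V_\Lambda$ for a positive-definite even lattice $\Lambda\subset U$ of full rank $\dim U$. This is exactly the step I expect to be the main obstacle, since it converts the abstract finiteness of $V$ into the concrete lattice data that governs the coset; its proof is a Jacobi-form/modularity argument rather than a formal manipulation. Once this is granted, one identifies $\mathrm{Com}(C,V)=V_\Lambda$, so that $V$ is a finite extension of $C\otimes V_\Lambda$ indexed by the finite group $L/\Lambda$, precisely as in the decomposition \eqref{eq:extensionV} used in Lemma \ref{lemma:C2}.

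With $V_\Lambda$ in hand the two finiteness properties transfer in turn. For $C_2$-cofiniteness I would apply Miyamoto's orbifold result \cite[Corollary 2]{Mi} (in the same manner as Corollary \ref{cor:C2}), using that $V_\Lambda$ is $C_2$-cofinite and of CFT-type and that $L/\Lambda$ is finite. For the complete reducibility of grading-restricted generalized $C$-modules I would then invoke \cite[Theorem 4.12]{CKLR}, whose hypotheses are now fulfilled. The only additional care is to confirm that the ambient assumptions of \cite{CKLR}, namely the existence of the relevant vertex tensor categories and the semisimplicity of the Heisenberg action, remain valid in the present setting; this is the same point already settled in Lemma \ref{lemma:C2} and Corollary \ref{Heisenberg coset}.
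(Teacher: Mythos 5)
Your proposal is correct and follows essentially the same route as the paper: part (1) via the conformal vector argument and Corollary \ref{Heisenberg coset}, and parts (2)--(3) via Mason's theorem identifying $\operatorname{Com}(C,V)$ with a positive-definite even lattice vertex operator algebra $V_\Lambda$, followed by \cite[Corollary 2]{Mi} and \cite[Theorem 4.12]{CKLR}. The extra remarks you add about the finite extension indexed by $L/\Lambda$ and the verification of the hypotheses of Corollary \ref{Heisenberg coset} are consistent with, and slightly more explicit than, the paper's own argument.
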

\begin{corollary}\label{remark:super}
Let $V$ be a simple vertex superalgebra of CFT-type such that the even subalgebra $V_0$ is simple, rational, $C_2$-cofinite and of CFT-type. Then Theorem \ref{thm:rationality} also holds for $V$.
\begin{proof}
First, (1) is clear by using the same proof of (1) in Theorem \ref{thm:rationality}. We need to show (2) and (3). Let $C = C_0 \oplus C_1$ be the decomposition of $C$ into the even and odd parts. Then by Theorem \ref{thm:rationality}, $C_0 = \operatorname{Com}(\pi_U, V_0)$ is simple, $C_2$-cofinite and of CFT-type. Consider $\Z_2$-action on $C$ with respect to the decomposition $C = C_0 \oplus C_1$. Then $C_0 = C^{\Z_2}$. By \cite[Theorem 3]{DM} (where the assertion is proved for simple vertex operator algebras, but the same proof applies for simple vertex operator superalgebras), it follows that $C_1$ is a simple $C_0$-module. Then $C$ is already $C_2$-cofininte as a $C_0$-module by \cite[Proposition 5.2]{ABD}. This proves (2). Finally, we show (3), but this follows from Theorem \ref{thm:super-semisimple}.
\end{proof}
\end{corollary}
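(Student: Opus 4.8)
The plan is to reduce the super statement to the purely even Theorem \ref{thm:rationality}, which by hypothesis already applies to $V_0$, and then to promote the conclusions from the even Heisenberg coset $C_0 := \mathrm{Com}(\pi_U, V_0)$ to the full coset $C := \mathrm{Com}(\pi_U, V)$. The structural observation that makes this possible is that $\pi_U$ lies in the even subalgebra $V_0$, so the parity grading of $V$ descends to $C$ and gives a decomposition $C = C_0 \oplus C_1$ into even and odd parts with $C_0 = C \cap V_0 = \mathrm{Com}(\pi_U, V_0)$. Thus $C_0$ is exactly the Heisenberg coset of the strongly rational vertex operator algebra $V_0$, and $C$ is a $\Z_2$-graded, hence finite, extension of it.

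For part (1), I would note that the argument in the proof of Theorem \ref{thm:rationality}(1) is entirely formal and never uses commutativity of $V$: the vector $\omega_V - \omega_U$ is the conformal vector of $C$, the CFT-type property follows by comparing conformal weights of $C$ inside $V$, and simplicity is precisely the output of Corollary \ref{Heisenberg coset}, which is stated for vertex operator superalgebras. Hence (1) carries over verbatim, and in particular $C$ is simple.

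For part (2), I would first apply Theorem \ref{thm:rationality} to the strongly rational $V_0$ to conclude that $C_0$ is simple, $C_2$-cofinite and of CFT-type. Regarding the parity involution as a $\Z_2$-action on the simple superalgebra $C$ with fixed-point subalgebra $C_0 = C^{\Z_2}$, a Dong--Mason type theorem (\cite[Theorem 3]{DM}, whose proof applies in the super setting) shows that the odd part $C_1$ is a simple $C_0$-module. Therefore $C$ is an extension of the $C_2$-cofinite vertex operator algebra $C_0$ by finitely many (here a single) simple $C_0$-modules, and $C_2$-cofiniteness propagates to $C$ by \cite[Proposition 5.2]{ABD}.

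The main obstacle is part (3), complete reducibility of grading-restricted generalized $C$-modules, since rationality does not transfer as mechanically as $C_2$-cofiniteness: a $C$-module need not respect the even/odd splitting, so one cannot merely restrict to $C_0$ and invoke its rationality. The plan is to deduce (3) from a general complete-reducibility result for grading-restricted modules over a simple vertex operator superalgebra whose even part is rational and $C_2$-cofinite, which I would establish separately as Theorem \ref{thm:super-semisimple}; the delicate point is to recover the full super-module category from the rational bosonic subcategory of $C_0$, for instance by analyzing induction and restriction along the finite extension $C_0 \subset C$.
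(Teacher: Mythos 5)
Your proposal is correct and follows essentially the same route as the paper: part (1) carries over verbatim, part (2) uses the identification $C_0=\mathrm{Com}(\pi_U,V_0)$, the parity $\Z_2$-action with $C_0=C^{\Z_2}$, the Dong--Mason theorem to see $C_1$ is a simple $C_0$-module, and \cite[Proposition 5.2]{ABD} to propagate $C_2$-cofiniteness, while part (3) is deferred to the separate semisimplicity result (Theorem \ref{thm:super-semisimple}), exactly as in the paper. The only point worth noting is that the paper's Theorem \ref{thm:super-semisimple} is proved via the induction/restriction adjunction for the order-two simple current extension $C_0\subset C$ in the vertex tensor category of $C_0$-modules, which is precisely the strategy you sketch.
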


To state Theorem \ref{thm:super-semisimple}, let us introduce several categories consisting of $V_0$-modules. Let $V_0$ be a vertex algebra and $\mathcal{C}_0$ be the vertex tensor category of $V_0$-module, that is, assume that $\mathcal{C}_0$ exists. Let $V$ be a simple vertex superalgebra with $V= V_0 \oplus V_1$ the decomposition into even and odd part. Suppose that $\mathcal C_0$ is semisimple and $V$ is an object in $\mathcal{C}_0$. The vertex superalgebra $V$ in Corollary \ref{remark:super} satisfies these assumptions: The category of $V_0$-module forms a vertex tensor category by \cite{Hu1, Hu2} and is in fact a modular tensor category $\mathcal{C}_0$. The proof of \cite[Theorem 3.1]{CKLR} also applies to the vertex superalgebra $V$ and so $V_1$ is a simple current for $V_0$.

By \cite{CKL}, $V$ is a commutative associative superalgebra object in $\mathcal{C}_0$. This algebra is an order two simple current extensions and these are always seperable \cite[Example 2.8]{DMNO} (special Frobenius algebras in the language of \cite{FRS}). Let $\mathcal{C}_V$ be the supercategory of modules for the commutative associative superalgebra object $V$ and $\mathcal C^\mathrm{loc}_V$ be the full subcategory of $\mathcal{C}_V$ consisting of local modules for $V$. Then objects in $\mathcal{C}_V$ consist of pairs $(M, \mu_M)$ of objects $M = M_0 \oplus M_1$ with $M_0, M_1 \in \mathcal{C}_0$ and even morphisms $\mu_M \colon V \boxtimes M \rightarrow M$ satisfying some conditions. An object $(M, \mu_M)$ in $\mathcal{C}_V$ is called local if $\mu_W \circ \mathcal{R}_{M, V} \circ \mathcal{R}_{V, M} = \mu_M$ for the braiding isomorphism $\mathcal{R}_{N, N'} \colon N \boxtimes N' \rightarrow N' \boxtimes N$. See \cite{CKM} for the definitions and properties of $\mathcal{C}_V$ and $\mathcal{C}_V^\mathrm{loc}$. By \cite[Theorem 1.3]{CKM}, $\mathcal C^\mathrm{loc}_V$ is equivalent to the supercategory of $V$-modules in $\mathcal{C}_0$ as braided tensor supercategories, and there is an induction functor $\mathcal{F}$ whose right-adjoint is the restriction functor $\mathcal{G}$:
\begin{equation*}
\begin{split}
&\mathcal{F} \colon \mathcal{C}_0 \rightarrow \mathcal{C}_V,\quad
M \mapsto (V \boxtimes M, \mu_{V \boxtimes M});\\
&\mathcal{G} \colon \mathcal{C}_V \rightarrow \mathcal{C}_0, \quad
(M, \mu_M) \mapsto M.
\end{split}
\end{equation*}
See \cite{CKM} for details on these functors.

\begin{theorem}\label{thm:super-semisimple} 
Let $V_0$ be a simple vertex algebra, $V$ be a simple vertex superalgebra with $V= V_0 \oplus V_1$ the decomposition into even and odd parts and $\mathcal{C}_0$ be a vertex tensor category of modules of the vertex algebra $V_0$. Suppose that $V$ is an object in $\mathcal{C}_0$, and $\mathcal{C}_0$ is semisimple and rigid. Then the category $\mathcal C^\mathrm{loc}_V$ of local $V$-modules that lie in $\mathcal C_0$ is semisimple as well.
\end{theorem}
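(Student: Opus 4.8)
The plan is to deduce semisimplicity of $\mathcal{C}^{\mathrm{loc}}_V$ from semisimplicity of the full module supercategory $\mathcal{C}_V$, and to obtain the latter from the fact that $V$ is a separable (special Frobenius) algebra object in $\mathcal{C}_0$. For the passage from $\mathcal{C}_V$ to $\mathcal{C}^{\mathrm{loc}}_V$ I would first record that $\mathcal{C}^{\mathrm{loc}}_V$ is a full subcategory of $\mathcal{C}_V$ closed under subobjects and quotients: if $(M,\mu_M)$ is local and $M'\subset M$ is a sub-$V$-module, then $V$-linearity of the inclusion together with the naturality of the braiding $\mathcal{R}$ identifies the monodromy $\mu_{M'}\circ \mathcal{R}_{M',V}\circ \mathcal{R}_{V,M'}$ with the restriction of $\mu_M\circ \mathcal{R}_{M,V}\circ \mathcal{R}_{V,M}=\mu_M$, hence it equals $\mu_{M'}$; the same holds for quotients. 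Granting that $\mathcal{C}_V$ is semisimple, any object of $\mathcal{C}^{\mathrm{loc}}_V$ decomposes in $\mathcal{C}_V$ into simple summands, each of which is a subobject and therefore local, so by fullness the decomposition already takes place in $\mathcal{C}^{\mathrm{loc}}_V$ with simple local summands.

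The core step is thus to prove that $\mathcal{C}_V$ is semisimple, and here I would exploit that $V$ is separable, as recorded above via \cite[Example 2.8]{DMNO}. Since $\mathcal{C}_0$ is semisimple, every short exact sequence in $\mathcal{C}_0$ splits, so the additive functors $\mathcal{F}=V\boxtimes(-)$ and the forgetful functor $\mathcal{G}$ are both exact, and $\mathcal{G}$ is faithful; in particular $\mathcal{C}_V$ is abelian. Separability provides a section of the multiplication $\mu\colon V\boxtimes V\to V$ as a bimodule morphism, equivalently a separability idempotent, so that for every $V$-module $M$ the action $\mu_M\colon \mathcal{F}\mathcal{G}(M)=V\boxtimes M\to M$ admits a natural $V$-linear splitting. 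Concretely, given a short exact sequence $0\to K\to M\to Q\to 0$ in $\mathcal{C}_V$, applying $\mathcal{G}$ and using semisimplicity of $\mathcal{C}_0$ yields a section $s$ in $\mathcal{C}_0$; averaging $s$ against the separability idempotent of $V$ turns it into a genuine $V$-linear section. Hence every short exact sequence in $\mathcal{C}_V$ splits, and $\mathcal{C}_V$ is semisimple.

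The remaining care is the super structure, following \cite{CKM}: $V=V_0\oplus V_1$ is a superalgebra object and $\mathcal{C}_V$ is a supercategory, so the averaging must be carried out with even morphisms and the correct Koszul signs coming from the braiding $\mathcal{R}$ of a braided tensor supercategory. Because the order-two simple current extension $V$ is separable with a parity-preserving structure, its separability idempotent is even and the averaging and exactness arguments apply verbatim. The main obstacle I anticipate is precisely this super/braided bookkeeping: checking that the separability section is an \emph{even} bimodule morphism, that the sign-corrected averaging still lands inside $\mathcal{C}_V$, and that the monodromy identity in the closure argument carries the correct signs, all without invoking a finiteness hypothesis on $\mathcal{C}_0$ (only semisimplicity and rigidity are assumed). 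Once these compatibilities are verified, semisimplicity of $\mathcal{C}_0$ propagates through the splitting argument to $\mathcal{C}_V$ and then restricts to $\mathcal{C}^{\mathrm{loc}}_V$.
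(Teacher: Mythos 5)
Your argument is correct in outline but takes a genuinely different route from the paper. The paper never proves semisimplicity of the full supercategory $\mathcal{C}_V$ directly; instead it passes to the underlying non-super category $\underline{\mathcal{C}}_V$ via the forgetful functor $\mathcal{H}$, invokes \cite[Proposition 2.7]{DMNO} to get semisimplicity there, and then transfers the statement back to $\mathcal{C}^{\mathrm{loc}}_V$ by a separate argument: every simple object of $\underline{\mathcal{C}}_V^{\mathrm{loc}}$ is induced from a simple object of $\mathcal{C}_0$, so $\mathcal{H}(X,\mu_X)\simeq\underline{\mathcal{F}}(X_0)$ for some semisimple $X_0$, and Frobenius reciprocity plus a comparison of graded dimensions shows $(X,\mu_X)\simeq\mathcal{F}(X_0)$, hence is completely reducible. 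You instead run the separability-idempotent averaging argument (which is essentially the proof of \cite[Proposition 2.7]{DMNO}) directly inside the supercategory, and then restrict to $\mathcal{C}^{\mathrm{loc}}_V$ using the clean observation that local modules are closed under subobjects and quotients by naturality of the braiding. What your route buys: it is more self-contained, avoids the detour through $\underline{\mathcal{C}}_V$ and the induction-functor machinery of \cite{CKM}, and does not implicitly lean on a fusion-category (finiteness) hypothesis in \cite{DMNO} that is not among the stated assumptions. What the paper's route buys: it outsources the separability argument entirely to the literature and sidesteps the super sign bookkeeping that you correctly flag as the delicate point of your approach. Note that the splitting of a short exact sequence after applying $\mathcal{G}$ can be chosen even (the sequence splits degreewise), and the separability idempotent of the order-two simple current extension is even, so the verifications you defer do go through; but as written your proposal states rather than checks them, and a complete write-up would need to carry out that averaging computation with the correct Koszul signs.
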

\begin{proof}
Recall that the objects and the Hom spaces in $\mathcal{C}_V$ has $\Z_2$-grading, which gives the respective super-structure. Denote by $\underline{\mathcal{C}}_V$ the category obtained from $\mathcal{C}_V$ by forgetting the above $\Z_2$-grading and by $\mathcal{H} = \mathcal{H}_{\mathcal{C}_V} \colon \mathcal{C}_V \rightarrow \underline{\mathcal{C}}_V$ the forgetful functor.
Note that $\mathcal{H}$ restricts to a functor on the subcategories of local modules $\mathcal{H} = \mathcal{H}_{\mathcal{C}_V^{\mathrm{loc}}} \colon \mathcal{C}_V^{\mathrm{loc}} \rightarrow \underline{\mathcal{C}}_V^{\mathrm{loc}}$. Since $V$ is a simple vertex superalgebra that lies in a semisimple vertex tensor category $\mathcal{C}_0$ of the simple vertex algebra $V_0$, the proof of \cite[Theorem 3.1]{CKLR} applies to $V$ and so $V_1$ is a simple current for $V_0$. Then $V$ is separable by \cite[Example 2.8]{DMNO}. Then the category $\underline{\mathcal{C}}_V$ of the separable algebra object $V$ in the fusion category $\mathcal{C}_0$ must be semisimple by \cite[Proposition 2.7]{DMNO}. Moreover, by \cite[Proposition 4.5, Remark 2.64]{CKM}, every simple object in $\underline{\mathcal{C}}_V^{\rm{loc}}$ is of the form $\mathcal{H} \circ \mathcal{F}(X)$ for some simple $X$ in $\mathcal{C}_0$. 

Let $\underline{\mathcal{F}} = \mathcal{H} \circ \mathcal{F}$ and $\underline{\mathcal{G}} \colon \underline{\mathcal{C}}_V \rightarrow \mathcal{C}_0$ be the restriction functor. Then by definition, $\mathcal{G} = \underline{\mathcal{G}} \circ \mathcal{H}$. Let $(X, \mu_X)$ be any object in $\mathcal{C}^\mathrm{loc}_V$. Then 
\[
\mathcal{H}(X, \mu_X) \simeq \bigoplus_{i \in I} (Y^i, \mu_{Y^i})
\]
as an object in $\underline{\mathcal{C}}_V^{\mathrm{loc}}$ with simple objects $(Y^i, \mu_{Y^i})$ in $\underline{\mathcal{C}}_V$ for some finite index set $I$. Let $Y^i_0 \oplus Y^i_1$ be the decompositon of $Y^i$ into even and odd parts. By \cite[Corollary 4.22]{CKM} we then have that $Y^i_0$ is simple in $\mathcal{C}_0$ and $(Y^i, \mu_{Y^i}) \simeq \underline{\mathcal{F}}(Y_0^i)$ in $\mathcal{C}_V$. Thus,
\[
\mathcal{H}(X, \mu_X) \simeq \underline{\mathcal{F}}(X_0),\quad
X_0 = \bigoplus_{i \in I} Y^i_0 \in \mathcal{C}_0.
\]
Let $(Z, \mu_Z)$ be an arbitrary object in $\mathcal{C}^\mathrm{loc}_V$ that is simple in $\mathcal{C}_V$. Then by \cite[Proposition 4.5]{CKM}, $(Z, \mu_Z) \simeq \mathcal{F}(Z_0)$ for some simple $Z_0$ in $\mathcal{C}_0$. Since $\mathcal{G}$ is the right-adjoint of $\mathcal{F}$, we have
\begin{align*}
\Hom_{\mathcal{C}^\mathrm{loc}_V}\left( (Z, \mu_Z), (X, \mu_X) \right)
&\simeq \Hom_{\mathcal{C}_V}\left( \mathcal{F}(Z_0), (X, \mu_X) \right)
\simeq \Hom_{\mathcal{C}_0}\left( Z_0, \mathcal{G}(X, \mu_X) \right)\\
&\simeq \Hom_{\mathcal{C}_0}\left( Z_0, \mathcal{G} \circ \mathcal{F}(X_0) \right)
\simeq \Hom_{\mathcal{C}_V^\mathrm{loc}}\left( (Z, \mu_Z), \mathcal{F}(X_0) \right),
\end{align*}
where we use the equations
\[
\mathcal{G}(X, \mu_X)
= \underline{\mathcal{G}} \circ \mathcal{H}(X, \mu_X)
\simeq \underline{\mathcal{G}} \circ \underline{\mathcal{F}}(X_0)
= \underline{\mathcal{G}} \circ \mathcal{H} \circ \mathcal{F} (X_0)
= \mathcal{G} \circ \mathcal{F}(X_0).
\]
But since $\mathcal F(X_0)$ is completely reducible in $\mathcal{C}^\mathrm{loc}_V$, this can only be true for all objects $(Z, \mu_Z)$ in $\mathcal{C}^\mathrm{loc}_V$ that are simple in $\mathcal{C}_V$ if $\mathcal{F}(X_0)\hookrightarrow (X, \mu_X)$ in $\mathcal{C}^\mathrm{loc}_V$, . Since $\mathcal{G}(\mathcal{F}(X_0)) \cong \mathcal{G}(X, \mu_X)$, these two modules have the same graded dimensions and so the embedding must be an isomorphism, i.e. $\mathcal{F}(X_0)$ is isomorphic to $(X, \mu_X)$ in $\mathcal{C}^\mathrm{loc}_V$. 
\end{proof}

\subsection{Application to the main Theorem}
We strengthen Theorem \ref{Duality} and Theorem \ref{thm:KS} from generic levels to all the ``non-critical'' levels, descend them to the simple quotients, and obtain a rationality result.

Let $V$ be a finite dimensional vector space over $\C$. A family of vector subspaces $\{W^\alpha\}_{\alpha\in\C}$ is called continuous if they are of the same dimension $d\in\Z_{\geq0}$ and the induced map $\C\rightarrow \operatorname{Gr}(d,V)$ to the Grassmannian manifold is continuous \cite{T}. For a $\mathbb{Z}$-graded vector space $V=\bigoplus_{n\in\Z}V_n$ such that $\mathrm{dim}V_n<\infty$, ($n\in\mathbb{Z}$),  a family of graded vector subspaces $\{W^\alpha\}_{\alpha\in\C}$, ($W^\alpha=\bigoplus_{n\in\Z}W^\alpha_n$), is called continuous if the homogeneous subspaces $\{W^\alpha_n\}_{\alpha\in \C}$, ($n\in\Z$), are continuous families.
\begin{lemma}\label{continuity}
Let $V=\bigoplus_{n\in\mathbb{Z}}V_n$ be a $\mathbb{Z}$-graded vertex superalgebra with $\mathrm{dim} V_n<\infty$, ($n\in\mathbb{Z}$). Let $\{W^1_\alpha\}_{\alpha\in\C}$, $\{W^2_\alpha\}_{\alpha\in\C}$ be $\mathbb{Z}$-graded vertex (super)subalgebras which form continuous families as vector spaces. If $W^1_\alpha= W^2_\alpha$ on some open dense subset $U\subset \C$, then $W^1_\alpha= W^2_\alpha$ for all $\alpha\in \C$ as vertex superalgebras.
\end{lemma}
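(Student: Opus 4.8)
The plan is to reduce the statement to a degree-by-degree claim and then invoke the elementary fact that two continuous maps into a Hausdorff space which coincide on a dense subset coincide everywhere. Since $W^1_\alpha$ and $W^2_\alpha$ are both vertex subalgebras of the ambient $V$, their vertex-algebra structures are the restrictions of that of $V$; hence equality as vertex superalgebras is equivalent to equality as $\Z$-graded vector subspaces. It therefore suffices to prove $(W^1_\alpha)_n = (W^2_\alpha)_n$ inside $V_n$ for every $n\in\Z$ and every $\alpha\in\C$.

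First I would fix a degree $n$. By the definition of a continuous family, the homogeneous pieces $(W^1_\alpha)_n$ share a common dimension $d^1_n$ and the pieces $(W^2_\alpha)_n$ share a common dimension $d^2_n$, and the induced maps $f^i_n\colon \C \to \operatorname{Gr}(d^i_n, V_n)$, $\alpha \mapsto (W^i_\alpha)_n$, are continuous for $i=1,2$. Because $W^1_\alpha = W^2_\alpha$ on the dense set $U$, in particular $(W^1_\alpha)_n = (W^2_\alpha)_n$ for $\alpha\in U$, which forces $d^1_n = d^2_n =: d_n$. Thus $f^1_n$ and $f^2_n$ are two continuous maps into one and the same Grassmannian $\operatorname{Gr}(d_n, V_n)$.

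The key step is then purely topological. The Grassmannian $\operatorname{Gr}(d_n, V_n)$ is Hausdorff (it is a compact smooth projective variety), so its diagonal $\Delta$ is closed, and consequently the coincidence locus $\{\alpha\in\C \mid f^1_n(\alpha) = f^2_n(\alpha)\} = (f^1_n, f^2_n)^{-1}(\Delta)$ is closed in $\C$. This closed set contains the dense set $U$, hence equals all of $\C$. Therefore $(W^1_\alpha)_n = (W^2_\alpha)_n$ for every $\alpha$, and since $n$ was arbitrary, $W^1_\alpha = W^2_\alpha$ as $\Z$-graded subspaces, and thus as vertex superalgebras, for all $\alpha\in\C$.

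I do not expect a genuine obstacle here beyond bookkeeping. The only point requiring a moment's care is confirming that the two families have equal homogeneous dimensions in each degree, which is immediate from their agreement on $U$ together with the constancy of dimension built into the definition of a continuous family; everything else is the standard density argument into a Hausdorff target.
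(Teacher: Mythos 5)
Your proposal is correct and follows essentially the same route as the paper's proof: fix a degree $n$, note that agreement on the dense set $U$ forces the common dimension $d_n$, view both families as continuous maps into $\operatorname{Gr}(d_n,V_n)$, conclude equality everywhere by density and continuity, and then observe that equality as graded subspaces of $V$ gives equality as vertex superalgebras. Your explicit appeal to the closed diagonal of the Hausdorff Grassmannian just makes precise the step the paper phrases as "the curve $p_1$ densely coincides with $p_2$ and thus is equal to $p_2$ by continuity."
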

\proof
Since $W^i_\alpha$ is a vertex subalgebra of $V$, we have embeddings $\iota^i_\alpha \colon W^i_\alpha \hookrightarrow V$ for all $\alpha \in \C$ with $i = 1, 2$. Let $W^i_\alpha = \bigoplus_{n \in \Z}W^i_{\alpha, n}$ be the decomposition with respect to the $\Z$-grading. By the assumption that $\{ W^i_{\alpha, n}\}_{\alpha \in \C}$ forms a continuous family for all $n \in \Z$ with $i =1, 2$ and $W^1_\alpha= W^2_\alpha$ for $\alpha \in U$, it follows that $\dim W^1_{\alpha, n} = \dim W^2_{\alpha, n} =: d_n$ for all $n \in \Z$, and the maps
\begin{align*}
p_i \colon \C \ni \alpha \mapsto \iota^i_\alpha(W^i_{\alpha, n}) \in \operatorname{Gr}(d_n, V_n),\quad
i = 1,2,
\end{align*}
define continuous curves in $\operatorname{Gr}(d_n, V_n)$. Then, by the assumption that $U$ is open dense in $\C$, the curve $p_1$ densely coincides with the curve $p_2$, and thus is equal to $p_2$ by the continuity of $p_1$ and $p_2$. This implies that $\iota^1_\alpha(W^1_{\alpha, n}) = \iota^2_\alpha(W^2_{\alpha, n})$ for all $n\in\Z$ and $\alpha \in \C$. Summing up all these equations, we have $\iota^1_\alpha(W^1_{\alpha}) = \iota^2_\alpha(W^2_{\alpha}) =: W_\alpha$. Since $\iota^i_\alpha$ is the embedding of vertex (super)algebras into $V$ for all $\alpha\in\C$ and $i=1, 2$, $W_\alpha$ is a vertex subalgebra of $V$ for all $\alpha\in\C$. Therefore $W^1_\alpha = W^2_\alpha$ for all $\alpha\in \C$ as vertex (super)algebras. This completes the proof.
\endproof
\noindent
We note that it was used in \cite{AFO} to prove the Feigin-Frenkel duality for principal $\mathcal{W}$-algebras at all levels.

Set the rational numbers $x_i$, ($i=1,2$), by 
\begin{align}\label{eq: second critical level}
&(x_1,x_2)=
\begin{cases}
\left(-n+\frac{1}{n},-\frac{n^2}{n+1}\right),&\mathrm{if}\ (\mathfrak{g}_1,\mathfrak{g}_2)=(\mathfrak{sl}_{n+1}, \mathfrak{sl}(1|n+1)),\\
\left(-2n+2,-n+\frac{1}{2}\right),&\mathrm{if}\ (\mathfrak{g}_1,\mathfrak{g}_2)=(\mathfrak{so}_{2n+1}, \mathfrak{osp}(2|2n)),
\end{cases}
\end{align}
and, set $K_i:=\{-h_i^\vee\}$, $S_i:= \{-h_i^\vee, x_i\}$. Then we have the following.
\begin{corollary}  \label{cor:coset}
Let $(\mathfrak{g}_1,\mathfrak{g}_2)=(\mathfrak{sl}_{n+1}, \mathfrak{sl}(1|n+1))$, or $(\mathfrak{so}_{2n+1}, \mathfrak{osp}(2|2n))$, and $(k_1,k_2)$ satisfy \eqref{eq: ell}. Then the following isomorphisms hold:
\begin{enumerate}
\item For $k_1 \in \mathbb C\setminus S_1$ and $k_2 \in \mathbb C\setminus S_2$,
$$\mathrm{Com}\left( \pi_{H_1}, \mathcal{W}^{k_1} (\mathfrak{g}_1, f_\mathrm{sub}) \right) \simeq \mathrm{Com}\left( \pi_{H_2}, \mathcal{W}^{k_2} (\mathfrak{g}_2) \right),$$
\item  For $k_1 \in \mathbb C\setminus K_1$ and $k_2 \in \mathbb C\setminus K_2$,
$$\mathcal{W}^{k_1} (\mathfrak{g}_1, f_\mathrm{sub}) \simeq \mathrm{Com}\left( \pi_{\widetilde{H}_2}, \mathcal{W}^{k_2} (\mathfrak{g}_2) \otimes V_{\Z\sqrt{-1}} \right),$$
\item  For $k_1 \in \C\setminus K_1$ and $k_2 \in \mathbb C\setminus K_2$,
$$\mathcal{W}^{k_2} (\mathfrak{g}_2) \simeq \mathrm{Com}\left( \pi_{\widetilde{H}_1},  \mathcal{W}^{k_1} (\mathfrak{g}_1, f_\mathrm{sub}) \otimes V_\Z \right).$$
\end{enumerate}
\end{corollary}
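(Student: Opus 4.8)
The plan is to promote the generic-level isomorphisms of Theorems~\ref{thm:coset} and~\ref{thm:KS} to the asserted ranges by the continuity principle of Lemma~\ref{continuity}. For each of the three isomorphisms I would first exhibit its two sides as families of $\Z$-graded vertex (super)subalgebras of a single ambient $\Z$-graded vertex superalgebra whose underlying graded vector space is independent of $k_1$ (with $k_2$ fixed by~\eqref{eq: ell}). This is available because every free-field algebra in play---the Heisenberg algebras $\pi_{\widetilde{\alpha}}$, $\pi_{\widetilde{\beta}}$ and the lattice/fermion algebras $V_\Z$, $V_{\Z\sqrt{-1}}$---has a state space that does not depend on the level, only the state-field correspondence varying with $k_1$. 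Concretely, for~(1) both cosets are placed inside this common Heisenberg space through $\Upsilon$ and $\Psi$ of~\eqref{eq: Upsilon},~\eqref{eq: Psi}, and for~(2),~(3) the two sides are placed inside the free-field spaces constructed in the proof of Theorem~\ref{thm:KS}.

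I would then check that these are continuous families in the sense preceding Lemma~\ref{continuity}: the embeddings $\Upsilon$, $\Psi$, the screening fields, and the identifications of the ambient Heisenberg algebras are all given by formulas whose coefficients are rational, hence continuous, in $k_1$, and the graded dimensions of the commutants stay constant as long as the generating Heisenberg subalgebra is non-degenerate. Since Theorems~\ref{thm:coset} and~\ref{thm:KS} already identify the two families on an open dense set of levels, Lemma~\ref{continuity} forces them to agree, as vertex (super)algebras, throughout the locus on which continuity holds.

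It remains to pin down that locus, and this is exactly where the two answers $S_i$ and $K_i$ diverge. The level $-h_i^\vee$ is excluded throughout, the Drinfeld--Sokolov reduction degenerating there. For~(1) the decisive quantity is the self-contraction of the generator $H_i$ of~\eqref{eq: H_1},~\eqref{eq: H_2}: using the free-field image $H_1\mapsto \omega^\vee_1-y$ one finds that $(H_1|H_1)$ is an affine function of $k_1$ vanishing precisely at $k_1=x_1$ of~\eqref{eq: second critical level}, and analogously for $H_2$; moreover~\eqref{eq: ell} makes $k_1=x_1$ and $k_2=x_2$ the same condition. At this second critical level $\pi_{H_i}$ collapses to a rank-one lattice-type algebra, $H_i$ becomes central, and the commutant absorbs $H_i$, so the graded dimension jumps and continuity genuinely fails; hence $x_i$ must be removed, yielding $\C\setminus S_i$. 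For~(2),~(3) the coset is instead taken by $\pi_{\widetilde{H}_i}$, and since $\widetilde{H}_1=\phi-H_1$, $\widetilde{H}_2=\psi+H_2$ with $(\phi|\phi)=1$, $(\psi|\psi)=-1$ and $\phi,\psi$ orthogonal to the $\mathcal W$-algebra, one gets $(\widetilde{H}_1|\widetilde{H}_1)=1+(H_1|H_1)$ and $(\widetilde{H}_2|\widetilde{H}_2)=-1+(H_2|H_2)$, each vanishing only at $k_i=-h_i^\vee$. Thus $\pi_{\widetilde{H}_i}$ remains non-degenerate for every $k_i\notin K_i$, no jump occurs at the second critical level, and only $-h_i^\vee$ need be deleted, yielding $\C\setminus K_i$.

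The step I expect to be the main obstacle is establishing the constancy of the coset dimension away from $S_i$ (resp.\ $K_i$): one must rule out that extra singular vectors in the screening complexes of Theorems~\ref{thm:sub-W},~\ref{thm:super-W} lower the graded character of the commutant at isolated non-generic levels. I expect this to follow from the non-vanishing of the Heisenberg self-pairing established above, together with the fact that, away from the critical and second-critical values, the relevant free-field resolutions stay exact, so the graded character of each family is forced to equal the level-independent value dictated by the generic case.
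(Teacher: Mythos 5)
Your overall strategy coincides with the paper's: both promote Theorems \ref{thm:coset} and \ref{thm:KS} to the stated ranges via the continuity principle of Lemma \ref{continuity}, placing the two sides as continuous families inside level-independent free-field spaces through $\Upsilon$ and $\Psi$, and both explain the discrepancy between $S_i$ and $K_i$ by the degeneration locus of the relevant Heisenberg subalgebra ($\pi_{H_i}$ degenerates exactly at $k_i=x_i$ from \eqref{eq: second critical level}, while $\pi_{\widetilde H_i}$ stays non-degenerate away from $-h_i^\vee$). The one step where your justification diverges, and as written would not go through, is the constancy of the graded character of the coset on all of $\C\setminus S_i$ (resp.\ $\C\setminus K_i$). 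You propose to deduce it from exactness of the free-field/screening resolutions, but the identification of the $\mathcal{W}$-(super)algebras and their cosets with intersections of kernels of screening operators (Theorems \ref{thm:sub-W} and \ref{thm:super-W}, Propositions \ref{prop:sub-W-coset} and \ref{prop:super-W-coset}) is only established for \emph{generic} $k$, so it cannot be invoked at precisely the isolated non-generic levels the corollary is trying to reach. The paper instead observes that $\mathcal{W}^{k_1}(\mathfrak{g}_1,f_{\mathrm{sub}})$ is a good vertex algebra in the sense of \cite[Definition 6.1]{CL3}: whenever $\pi_{H_1}$ is non-degenerate its Fock modules are simple, the $\mathcal{W}$-algebra decomposes semisimply over $\pi_{H_1}$ into Fock modules tensored with multiplicity spaces, and since the characters of the $\mathcal{W}$-algebra and of each Fock module are level-independent, so is the character of the coset (\cite[Lemma 6.3 and Section 6]{CL3}); the same argument handles the cosets by $\pi_{\widetilde H_i}$ in parts (2) and (3). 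With that substitution for the character-constancy step, your argument matches the paper's proof.
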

\proof
Notice that $\mathcal{W}^{k_1} (\mathfrak{g}_1, f_\mathrm{sub})$ is a good vertex algebra in the sense of \cite[Definition 6.1]{CL3} with the structure map $\pi_{H_1} \hookrightarrow \mathcal{W}^{k_1} (\mathfrak{g}_1, f_\mathrm{sub})$. Since the Fock modules of $\pi_{H_1}$ are simple unless $\pi_{H_1}$ is degenerate, \cite[Lemma 6.3]{CL3} holds, for all $k_1$ such that $k_1 \in \mathbb C\setminus S_1$. Using the same proof as in \cite[Section 6]{CL3}, it follows that the graded character of $\mathrm{Com}\left( \pi_{H_1}, \mathcal{W}^{k_1} (\mathfrak{g}_1, f_\mathrm{sub}) \right)$ is independent of $k_1$, for all $k_1$ such that $k_1 \in \mathbb C\setminus S_1$. In the same way, one can show that the graded characters of the Heisenberg cosets of vertex (super)algebras appearing in the assertions are independent of $k_1$ (resp. $k_2$) in the range given in each assertions. 

(1) First, note that the pair $(x_1,x_2)$ in \eqref{eq: second critical level} satisfies the relation \eqref{eq: ell}. Recall that non-degenerate Heisenberg vertex algebras are all isomorphic if and only if their ranks are equal and that they have the conformal gradings by the Segal-Sugawara vectors with each homogeneous subspace is of finite dimension. 
Next, note that the excluded level $(k_1,k_2)=(x_1,x_2)$ is exactly when the Heisenberg vertex algebras $\pi_{H_1}$ $\pi_{H_2}$ degenerate.
Then $\{\mathrm{Com}(\pi_{H_1},\mathcal{W}^{k_1}(\mathfrak{g}_1,f_{\mathrm{sub}}))\}_{k_1\in \C\setminus S_1}$ is a continuous family of vertex algebras inside a non-degenerate Heisenberg vertex algebra of rank $n+1$ by \eqref{eq: Upsilon} and so is $\{\mathrm{Com}(\pi_{H_2},\mathcal{W}^{k_2}(\mathfrak{g}_2)\}_{k_2\in \C\setminus S_2}$ by \eqref{eq: Psi}. They are isomorphic if a generic level $(k_1,k_2)$ with \eqref{eq: ell}, i.e., all values for $k_1\in \C\setminus S_1$, (equivalently $k_2\in \C\setminus S_2$ by \eqref{eq: ell}), except for countably many values. Thus we may apply Lemma \ref{continuity} with $\C$ replaced by $\C\setminus S_1$. This completes the proof. (2) and (3) are proved in the same way. 
\endproof
Since the $\mathcal{W}$-superalgebras $\mathcal{W}^{k_1}(\mathfrak{g_1},f_{\mathrm{sub}})$ and $\mathcal{W}^{k_2}(\mathfrak{g}_2)$ as above are of CFT type, their simple quotients exist uniquely, which we denote by $\mathcal{W}_{k_1}(\mathfrak{g_1},f_{\mathrm{sub}})$ and $\mathcal{W}_{k_2}(\mathfrak{g}_2)$ respectively. It follows from the construction \cite{KRW,KW} of $\mathcal{W}$-superalgebras that the Heisenberg subalgebras $\pi_{H_1}\subset \mathcal{W}^{k_1}(\mathfrak{g_1},f_{\mathrm{sub}})$ and $\pi_{H_2}\subset \mathcal{W}^{k_2}(\mathfrak{g_2})$ act semi-simply. Then we see that  the Heisenberg vertex subalgebras $ \pi_{\widetilde{H}_1}\subset \mathcal{W}_{k_1} (\mathfrak{g}_1, f_\mathrm{sub}) \otimes V_\Z$ and $\pi_{\widetilde{H}_2}\subset \mathcal{W}_{k_2} (\mathfrak{g}_2) \otimes V_{\Z\sqrt{-1}}$ act semi-simply.
By applying Corollary \ref{Heisenberg coset} to the above Corollary, we obtain the following. 
\begin{corollary}\label{isom: simple quotients}
Let $(\mathfrak{g}_1,\mathfrak{g}_2)=(\mathfrak{sl}_{n+1}, \mathfrak{sl}(1|n+1))$, or $(\mathfrak{so}_{2n+1}, \mathfrak{osp}(2|2n))$, and $(k_1,k_2)$ satisfy \eqref{eq: ell}. Then the following isomorphisms hold:
\begin{enumerate}
\item For $k_1 \in \mathbb C\setminus S_1$ and $k_2 \in \mathbb C\setminus S_2$,
$$\mathrm{Com}\left( \pi_{H_1}, \mathcal{W}_{k_1} (\mathfrak{g}_1, f_\mathrm{sub}) \right) \simeq \mathrm{Com}\left( \pi_{H_2}, \mathcal{W}_{k_2} (\mathfrak{g}_2) \right)$$
 \item  For $k_1 \in \mathbb C\setminus K_1$ and $k_2 \in \mathbb C\setminus K_2$,
$$\mathcal{W}_{k_1} (\mathfrak{g}_1, f_\mathrm{sub}) \simeq \mathrm{Com}\left( \pi_{\widetilde{H}_2}, \mathcal{W}_{k_2} (\mathfrak{g}_2) \otimes V_{\Z\sqrt{-1}} \right),$$
\item  For $k_1 \in \mathbb C\setminus K_1$ and $k_2 \in \mathbb C\setminus K_2$,
$$\mathcal{W}_{k_2} (\mathfrak{g}_2) \simeq \mathrm{Com}\left( \pi_{\widetilde{H}_1},  \mathcal{W}_{k_1} (\mathfrak{g}_1, f_\mathrm{sub}) \otimes V_\Z \right).$$
\end{enumerate}
\end{corollary}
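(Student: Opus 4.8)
The plan is to deduce all three isomorphisms from their universal counterparts in Corollary~\ref{cor:coset} by descending to simple quotients by means of Corollary~\ref{Heisenberg coset}, using throughout the fact that each vertex (super)algebra in sight is of CFT type and therefore possesses a unique maximal proper graded ideal, hence a unique simple graded quotient. The isomorphisms of Corollary~\ref{cor:coset} are isomorphisms of graded vertex (super)algebras, so they carry maximal graded ideals to maximal graded ideals and thus induce isomorphisms of simple quotients; this is the mechanism that will transfer simplicity-level statements across the duality.

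First I would check the hypotheses of Corollary~\ref{Heisenberg coset} on the stated level ranges. The semisimplicity of the actions of $\pi_{H_i}$ and of $\pi_{\widetilde{H}_i}$ is exactly the content of the discussion preceding the statement, and the boundedness condition~(2) of Corollary~\ref{Heisenberg coset} follows from the conformal gradings of the $\mathcal{W}$-(super)algebras, whose homogeneous pieces are finite-dimensional and bounded from below. The remaining hypothesis is the non-degeneracy of the rank-one Heisenberg subalgebra, which is precisely what dictates the excluded levels: $\pi_{H_i}$ degenerates at $k_i=x_i$, which is why part~(1) must remove $S_i=\{-h_i^\vee,x_i\}$, whereas the contribution of the lattice generators (recall $(\phi|\phi)=1$, $(\psi|\psi)=-1$) keeps $\pi_{\widetilde{H}_i}$ non-degenerate on the larger range $\C\setminus K_i$, so that parts~(2) and~(3) only remove the critical value $K_i=\{-h_i^\vee\}$.

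For part~(1) I would apply Corollary~\ref{Heisenberg coset} to $V=\mathcal{W}^{k_1}(\mathfrak{g}_1,f_\mathrm{sub})$ with its simple quotient $\mathcal{W}_{k_1}(\mathfrak{g}_1,f_\mathrm{sub})$ and $\pi=\pi_{H_1}$, which identifies $\mathrm{Com}(\pi_{H_1},\mathcal{W}_{k_1}(\mathfrak{g}_1,f_\mathrm{sub}))$ as a simple quotient of $\mathrm{Com}(\pi_{H_1},\mathcal{W}^{k_1}(\mathfrak{g}_1,f_\mathrm{sub}))$; symmetrically, $\mathrm{Com}(\pi_{H_2},\mathcal{W}_{k_2}(\mathfrak{g}_2))$ is a simple quotient of $\mathrm{Com}(\pi_{H_2},\mathcal{W}^{k_2}(\mathfrak{g}_2))$. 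Since the two universal cosets are isomorphic by Corollary~\ref{cor:coset}, and a graded isomorphism carries the unique maximal graded ideal to the unique maximal graded ideal, the induced simple quotients are isomorphic, which is~(1).

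For parts~(2) and~(3) the same argument runs, now with a lattice factor. For~(2), the simple quotient of $\mathcal{W}^{k_2}(\mathfrak{g}_2)\otimes V_{\Z\sqrt{-1}}$ is $\mathcal{W}_{k_2}(\mathfrak{g}_2)\otimes V_{\Z\sqrt{-1}}$, being the tensor product of a simple quotient with the simple lattice vertex superalgebra $V_{\Z\sqrt{-1}}$; applying Corollary~\ref{Heisenberg coset} with $\pi=\pi_{\widetilde{H}_2}$ then produces a simple quotient $\mathrm{Com}(\pi_{\widetilde{H}_2},\mathcal{W}^{k_2}(\mathfrak{g}_2)\otimes V_{\Z\sqrt{-1}})\twoheadrightarrow\mathrm{Com}(\pi_{\widetilde{H}_2},\mathcal{W}_{k_2}(\mathfrak{g}_2)\otimes V_{\Z\sqrt{-1}})$. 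Transporting the source along the universal isomorphism of Corollary~\ref{cor:coset}, which identifies it with $\mathcal{W}^{k_1}(\mathfrak{g}_1,f_\mathrm{sub})$, exhibits the simple target as a simple quotient of $\mathcal{W}^{k_1}(\mathfrak{g}_1,f_\mathrm{sub})$; by uniqueness of the latter's simple quotient it must coincide with $\mathcal{W}_{k_1}(\mathfrak{g}_1,f_\mathrm{sub})$, giving~(2). Part~(3) is proved in the same way with the roles of $\mathfrak{g}_1$ and $\mathfrak{g}_2$ interchanged, using the corresponding universal isomorphism of Corollary~\ref{cor:coset} and $V_\Z$ in place of $V_{\Z\sqrt{-1}}$. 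The only genuine obstacle is the careful verification that the hypotheses of Corollary~\ref{Heisenberg coset}, above all the non-degeneracy of the relevant Heisenberg subalgebra, truly hold on the ranges $\C\setminus S_i$ and $\C\setminus K_i$; once that bookkeeping is settled, the uniqueness of simple quotients of CFT-type vertex algebras does the rest.
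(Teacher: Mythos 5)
Your proposal is correct and follows essentially the same route as the paper: the authors likewise note that the relevant Heisenberg subalgebras act semisimply, apply Corollary \ref{Heisenberg coset} to the universal isomorphisms of Corollary \ref{cor:coset}, and conclude via the uniqueness of simple quotients. The only difference is that you make explicit the bookkeeping the paper leaves implicit (the CFT-type uniqueness of the maximal graded ideal, the simplicity of the tensor products with the lattice algebras, and the non-degeneracy ranges $S_i$ versus $K_i$), all of which is consistent with the paper's intent.
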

The following is a special case of a $C_2$-cofiniteness result \cite[Theorem 5.9.1]{Ar3} and a rationality result \cite[Theorem 9.4]{AvE} of  simple $\mathcal{W}$-algebras at the admissible levels. 
\begin{theorem}[\cite{Ar3,AvE}]\label{thm: c2 and rationality for W}${}$
\begin{enumerate}
\item 
$\mathcal{W}_k (\mathfrak{sl}_{n+1}, f_\mathrm{sub})$ is $C_2$-cofinite  and rational for 
\begin{align}\label{admissible level 1}
k=-(n+1) + \frac{u}{n},\quad u\in \mathbb Z_{> n},\ (u, n)=1.
\end{align} 
\item 
$\mathcal{W}_k (\mathfrak{so}_{2n+1}, f_\mathrm{sub})$ is $C_2$-cofinite for 
\begin{align}\label{admissible level 2}
k=-(2n-1) + \frac{u}{v},\quad
u\in \mathbb Z_{> v},\ 
v\in\{2n-1, 2n\},\ 
(u, v)=1.
\end{align}
\end{enumerate}
\end{theorem}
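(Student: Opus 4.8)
The plan is to obtain both statements purely as specializations of the quoted theorems, so that the only real work is to check that the listed levels fall within their hypotheses. First I would recall the shape of those hypotheses: by \cite[Theorem 5.9.1]{Ar3}, if $k$ is an admissible level for $\widehat{\mathfrak g}$ and the subregular nilpotent $f_{\mathrm{sub}}$ is a generic (dense) point of the associated variety $X_{L_k(\mathfrak g)}$ of the simple affine vertex algebra, then $\mathcal W_k(\mathfrak g,f_{\mathrm{sub}})$ is lisse, i.e. $C_2$-cofinite; and in type $A$ such an admissible pair is exceptional, whence $\mathcal W_k(\mathfrak{sl}_{n+1},f_{\mathrm{sub}})$ is rational by \cite[Theorem 9.4]{AvE}. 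Thus the proof splits into two verifications for each family: admissibility of the level, and the identification $X_{L_k(\mathfrak g)}=\overline{\mathbb O}_{\mathrm{sub}}$ with the closure of the subregular orbit.

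For (1) I would compute $k+h^\vee=u/n$ with $h^\vee=n+1$, so the denominator is $q=n$ and the numerator $p=u$; the hypotheses $u>n$ and $(u,n)=1$ give $p\ge n+1=h^\vee$ and $\gcd(p,q)=1$, which is exactly principal admissibility of $\widehat{\mathfrak{sl}}_{n+1}$. I would then invoke Arakawa's description of the associated variety of an admissible $L_k(\mathfrak{sl}_{n+1})$, namely $X_{L_k(\mathfrak{sl}_{n+1})}=\{x\mid x^{q}=0\}$; for $q=n$ the generic such nilpotent has Jordan type $(n,1)$, so this is precisely $\overline{\mathbb O}_{\mathrm{sub}}$. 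Hence $f_{\mathrm{sub}}$ is a dense point of $X_{L_k(\mathfrak{sl}_{n+1})}$ and (1) follows from \cite[Theorem 5.9.1]{Ar3} and \cite[Theorem 9.4]{AvE}.

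For (2) the same scheme applies, but with the subtlety that $\mathfrak{so}_{2n+1}$ has lacing number $r=2$, so admissibility bifurcates according to $\gcd(v,r)$. Writing $k+h^\vee=u/v$ with $h^\vee=2n-1$ and Coxeter number $h=2n$, the case $v=2n-1$ is coprime to $r$ and satisfies $u>v=h^\vee$, hence is principal admissible, while the case $v=2n$ has $r\mid v$ and satisfies $u>v=h>h^\vee$, hence is coprincipal admissible. In each family I would appeal to Arakawa's associated-variety computation in type $B$ to identify $X_{L_k(\mathfrak{so}_{2n+1})}$ with $\overline{\mathbb O}_{\mathrm{sub}}$, making $f_{\mathrm{sub}}$ generic there, and then conclude $C_2$-cofiniteness from \cite[Theorem 5.9.1]{Ar3}; rationality is not claimed, since \cite{AvE} is restricted to type $A$.

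The hard part is squarely the second verification, the equality $X_{L_k(\mathfrak g)}=\overline{\mathbb O}_{\mathrm{sub}}$. In type $A$ it is clean, resting only on $X_{L_k}=\{x\mid x^q=0\}$ and the trivial partition identity $n+1=1\cdot n+1$ with $q=n$. In type $B$ it is more delicate: one must separately handle the principal and coprincipal families and quote the correspondingly different associated-variety results, tracking the roles of the lacing number $r$ and of $h$ versus $h^\vee$ in the admissibility bounds. The remaining ingredients, namely the arithmetic of the admissibility inequalities and coprimality, and the implication from ``$f_{\mathrm{sub}}$ dense in $X_{L_k}$'' to lisseness, are routine or are exactly the content of the cited theorems.
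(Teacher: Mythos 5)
Your proposal is correct and matches the paper's treatment: the paper offers no independent argument but simply records this statement as a special case of \cite[Theorem 5.9.1]{Ar3} and \cite[Theorem 9.4]{AvE}, so the only content is exactly the verification you carry out, namely that the listed levels are (co)principal admissible with denominators $q=n$ (type $A$) and $q\in\{2n-1,2n\}$ (type $B$), and that for these denominators the associated variety $X_{L_k(\mathfrak g)}$ is the closure of the subregular orbit, so that $f_{\mathrm{sub}}$ is a dense point of it. Your admissibility arithmetic and the identification of the generic Jordan type $(n,1)$ in type $A$ are both right, so the proposal is a faithful (and more explicit) rendering of what the paper leaves implicit.
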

\begin{corollary}\label{cor:C2cofiniteness} ${}$
\begin{enumerate}
\item 
$\mathrm{Com}(\pi_{H_1},  \mathcal{W}_k (\mathfrak{sl}_{n+1}, f_\mathrm{sub}))$ is $C_2$-cofinite and rational for \eqref{admissible level 1}.
\item
$\mathrm{Com}(\pi_{H_1},\mathcal{W}_k (\mathfrak{so}_{2n+1}, f_\mathrm{sub}))$ is $C_2$-cofinite for \eqref{admissible level 2}.
\end{enumerate}
\end{corollary}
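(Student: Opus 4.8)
The plan is to prove the two parts by different routes, reflecting the fact that $\mathcal{W}_k(\mathfrak{sl}_{n+1}, f_\mathrm{sub})$ is known to be strongly rational at the levels \eqref{admissible level 1}, whereas $\mathcal{W}_k(\mathfrak{so}_{2n+1}, f_\mathrm{sub})$ is only known to be $C_2$-cofinite at the levels \eqref{admissible level 2}. For part (1) I would invoke the Heisenberg-coset rationality theorem (Theorem \ref{thm:rationality}) directly; for part (2), where rationality is unavailable, I would instead use the weaker $C_2$-cofiniteness argument of Lemma \ref{lem:C2lat} and Corollary \ref{cor:C2}, combined with the inverse Kazama-Suzuki isomorphism.

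For part (1), set $V = \mathcal{W}_k(\mathfrak{sl}_{n+1}, f_\mathrm{sub})$; by Theorem \ref{thm: c2 and rationality for W}(1) it is simple, rational, $C_2$-cofinite and of CFT-type for the levels \eqref{admissible level 1}. The weight-one subspace of the subregular $\mathfrak{sl}_{n+1}$-algebra is one-dimensional and spanned by $H_1$, so a Cartan subalgebra $H$ of $V$ equals $\mathbb{C}H_1$ and $\pi_{H_1}$ is the associated rank-one Heisenberg subalgebra. As long as $\pi_{H_1}$ is non-degenerate, Theorem \ref{thm:rationality} applies with $U = H = \mathbb{C}H_1$ and shows that $\mathrm{Com}(\pi_{H_1}, V)$ is simple, of CFT-type, $C_2$-cofinite, and has all grading-restricted generalized modules completely reducible, i.e. it is strongly rational. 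This yields both assertions of part (1) at once.

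For part (2), $\mathcal{W}_k(\mathfrak{so}_{2n+1}, f_\mathrm{sub})$ is $C_2$-cofinite but not known to be rational, so Theorem \ref{thm:rationality} (whose proof passes through Mason's lattice theorem, and hence uses rationality) is not available. Instead I would first rewrite the target coset by Feigin-Frenkel duality for the simple quotients: Corollary \ref{isom: simple quotients}(1) gives $\mathrm{Com}(\pi_{H_1}, \mathcal{W}_k(\mathfrak{so}_{2n+1}, f_\mathrm{sub})) \simeq D := \mathrm{Com}(\pi_{H_2}, \mathcal{W}_{k_2}(\mathfrak{osp}(2|2n)))$, with $k_2$ the dual level \eqref{eq: ell}. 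I would then instantiate the set-up of Lemma \ref{lem:C2lat} with $W = \mathcal{W}_{k_2}(\mathfrak{osp}(2|2n))$, $\pi_\alpha = \pi_{H_2}$, and $V_{\mathbb{Z}\beta} = V_{\mathbb{Z}\sqrt{-1}}$, taking $\pi_\gamma$ in the direction of $\widetilde{H}_2 = H_2 + \psi$. The decisive point is that the inverse Kazama-Suzuki isomorphism (Corollary \ref{isom: simple quotients}(2)) identifies $C_0 = \mathrm{Com}(\pi_\gamma, W \otimes V_{\mathbb{Z}\sqrt{-1}})$ with $\mathcal{W}_k(\mathfrak{so}_{2n+1}, f_\mathrm{sub})$, which is $C_2$-cofinite of CFT-type by Theorem \ref{thm: c2 and rationality for W}(2). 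Thus $C_0$ meets the hypothesis of Lemma \ref{lem:C2lat}, so $\mathrm{Com}(D, C_0)$ is a positive-definite rank-one lattice vertex operator algebra, and Corollary \ref{cor:C2} then gives that $D$, and hence the coset in part (2), is $C_2$-cofinite. It is worth stressing that this route realises the \emph{known} $C_2$-cofinite algebra as the coset $C_0$ rather than as the ambient algebra $W$; had one instead taken $W = \mathcal{W}_k(\mathfrak{so}_{2n+1}, f_\mathrm{sub})$ and produced $C_0 = \mathcal{W}_{k_2}(\mathfrak{osp}(2|2n))$ by the forward construction, the argument would be circular, since the $C_2$-cofiniteness of the principal $\mathcal{W}$-superalgebra is exactly what is deduced afterwards.

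The principal obstacles are verifications rather than new ideas. One must check the numerical hypotheses feeding \eqref{change of basis} and Lemma \ref{lem:C2lat}: namely $(H_2|H_2) = q \in \mathbb{Q}_{>0}$ and $q + r \neq 0$ with $r = (\psi|\psi) = -1$; the positivity $(\mu|\mu) > 0$ of the resulting rank-one lattice is then forced by CFT-type, exactly as in the proof of Lemma \ref{lem:C2lat}. One must also confirm the reality conditions of Assumption \ref{ass:real} and the semisimplicity of the Heisenberg actions on the simple quotients, the latter being built into the construction of the $\mathcal{W}$-(super)algebras. The genuinely delicate point lies in part (1): the non-degeneracy of $\pi_{H_1}$ fails precisely at the collapsing level $k = x_1 = -n + \tfrac{1}{n}$ of \eqref{eq: second critical level}, which is the admissible value $u = n+1$ in \eqref{admissible level 1}. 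At every other level in \eqref{admissible level 1} the argument applies directly, and I would treat this single remaining level $k = x_1$ separately, or note that it must be excluded from the clean statement.
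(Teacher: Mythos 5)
Your proposal is correct and follows essentially the same route as the paper: part (1) is exactly the paper's one-line appeal to Theorem \ref{thm:rationality} together with Theorem \ref{thm: c2 and rationality for W}(1), and part (2) is precisely the paper's argument, which applies Corollary \ref{cor:C2} with $W=\mathcal W_{\ell}(\mathfrak{osp}(2|2n))$, $\alpha=H_2$, $\beta=\psi$ (so $q=1-\tfrac{v}{u}$, $r=-1$), using Corollary \ref{isom: simple quotients} to identify $C_0$ with the known $C_2$-cofinite algebra $\mathcal W_k(\mathfrak{so}_{2n+1},f_{\mathrm{sub}})$ --- your remark on why the roles of $W$ and $C_0$ cannot be interchanged is exactly the right way to see the argument is not circular. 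The one point where you go beyond the paper is the caveat about $u=n+1$ in \eqref{admissible level 1}, where $k=x_1$ and $(H_1|H_1)=0$; this is a genuine subtlety that the paper's proof of (1) does not address, but the statement survives there because in a simple, strongly rational VOA of CFT-type the invariant form on the weight-one space is nondegenerate (this underlies Mason's theorem used in Theorem \ref{thm:rationality}), so $H_1$ must vanish in the simple quotient at that level and the coset degenerates to the whole algebra, which is rational and $C_2$-cofinite by Theorem \ref{thm: c2 and rationality for W}(1).
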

\proof
(1) follows from Theorem \ref{thm:rationality} and Theorem \ref{thm: c2 and rationality for W} (1). We show (2). First, by Corollary \ref{isom: simple quotients} (2), $\mathcal{W}_k (\mathfrak{so}_{2n+1}, f_\mathrm{sub})=\operatorname{Com}\left( \pi_{\widetilde{H}_2}, \mathcal{W}_{\ell} (\mathfrak{osp}(2|2n)) \otimes V_{\Z\sqrt{-1}} \right)$ with $\ell = -n + \frac{v}{2u}$. Recall that $\widetilde{H}_2 = H_2 + \psi$, where $H_2 \in \mathcal{W}_{\ell}(\mathfrak{osp}(2|2n))$ and $\psi \in  V_{\Z\sqrt{-1}}$, see \eqref{eq: tilde{H}_2}. Then we may apply Corollary \ref{cor:C2} for $W=\mathcal{W}_{\ell} (\mathfrak{osp}(2|2n))$, $\alpha = H_2$ and $\beta = \psi$ since $C_0 = \mathcal{W}_k (\mathfrak{so}_{2n+1}, f_\mathrm{sub})$ is a $C_2$-cofinite vertex operator algebra of CFT-type by Theorem \ref{thm: c2 and rationality for W} (2). Here $q = 1-\frac{v}{u}$ and $r = -1$. As a consequence, $D = \operatorname{Com}(\pi_{H_1},\mathcal{W}_k (\mathfrak{so}_{2n+1}, f_\mathrm{sub}))$ is $C_2$-cofinite. This proves (2).
\endproof
\begin{corollary}\label{cor:rational} ${}$
\begin{enumerate}
\item
Let $\ell=-n +\frac{n}{u}$  with  $u\in \mathbb Z_{> n}$ and $(u, n)=1$. Then $\mathcal{W}_{\ell} (\mathfrak{sl}(1|n+1))$ is $C_2$-cofinite and rational.
\item
Let $\ell=-n +\frac{v}{2u}$  with  $u\in \mathbb Z_{>v}$, $v\in\{2n-1, 2n\}$ and $(u, v)=1$. Then $\mathcal{W}_{\ell} (\mathfrak{osp}(2|2n))$ is $C_2$-cofinite.
\end{enumerate}
\end{corollary}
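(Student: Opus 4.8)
The plan is to convert the admissible-level results for the subregular $\mathcal W$-algebras (Theorem \ref{thm: c2 and rationality for W}) into the desired results for the principal $\mathcal W$-superalgebras through the Kazama--Suzuki isomorphisms of Corollary \ref{isom: simple quotients} and the Heisenberg-coset machinery of Section \ref{sec:Heisenbergcoset}. First I would record the level matching implied by \eqref{eq: ell}: for $\mathfrak{sl}(1|n+1)$ the level $\ell=-n+\tfrac{n}{u}$ is dual to $k_1=-(n+1)+\tfrac{u}{n}$, which is exactly the admissible level \eqref{admissible level 1}; and for $\mathfrak{osp}(2|2n)$ the level $\ell=-n+\tfrac{v}{2u}$ is dual to $k_1=-(2n-1)+\tfrac{u}{v}$, an admissible level \eqref{admissible level 2}. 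Hence in both cases the dual subregular $\mathcal W$-algebra $\mathcal W_{k_1}(\mathfrak g_1,f_\mathrm{sub})$ is $C_2$-cofinite (and, in type $A$, rational) by Theorem \ref{thm: c2 and rationality for W}, and this is the input I would propagate.

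For part (1) the strategy is to exhibit $\mathcal W_{\ell}(\mathfrak{sl}(1|n+1))$ as a Heisenberg coset of a strongly rational vertex superalgebra and invoke Corollary \ref{remark:super}. By Corollary \ref{isom: simple quotients}(3),
\[
\mathcal W_{\ell}(\mathfrak{sl}(1|n+1))\simeq \mathrm{Com}\!\left(\pi_{\widetilde H_1},\,\mathcal W_{k_1}(\mathfrak{sl}_{n+1},f_\mathrm{sub})\otimes V_\Z\right).
\]
Here $\mathcal W_{k_1}(\mathfrak{sl}_{n+1},f_\mathrm{sub})$ is simple, rational, $C_2$-cofinite and of CFT type, while $V_\Z$ is the charged free-fermion vertex superalgebra whose even part $V_{2\Z}$ is a positive-definite rank-one lattice vertex operator algebra and hence strongly rational. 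Thus $V:=\mathcal W_{k_1}(\mathfrak{sl}_{n+1},f_\mathrm{sub})\otimes V_\Z$ is a simple vertex superalgebra of CFT type whose even part is simple, rational, $C_2$-cofinite and of CFT type. Since $\widetilde H_1=-\omega^\vee_1+y+\phi$ generates a \emph{non-degenerate} rank-one Heisenberg subalgebra inside a Cartan of $V$ — indeed $(\widetilde H_1|\widetilde H_1)=(k_1+h^\vee_1)(\omega^\vee_1|\omega^\vee_1)>0$ at the admissible level, by the explicit realization underlying Theorem \ref{thm:KS} — Corollary \ref{remark:super} applies and delivers at once that $\mathcal W_{\ell}(\mathfrak{sl}(1|n+1))$ is simple, of CFT type, $C_2$-cofinite and has completely reducible module category, i.e.\ is strongly rational.

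For part (2) the subregular side $\mathcal W_{k_1}(\mathfrak{so}_{2n+1},f_\mathrm{sub})$ is only known to be $C_2$-cofinite, so Corollary \ref{remark:super} is unavailable and I would instead bootstrap from the Heisenberg coset. Corollary \ref{cor:C2cofiniteness}(2) gives that $\mathrm{Com}(\pi_{H_1},\mathcal W_{k_1}(\mathfrak{so}_{2n+1},f_\mathrm{sub}))$ is $C_2$-cofinite, and by Corollary \ref{isom: simple quotients}(1) this is isomorphic to $D:=\mathrm{Com}(\pi_{H_2},\mathcal W_{\ell}(\mathfrak{osp}(2|2n)))$, so $D$ is $C_2$-cofinite of CFT type. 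I would then apply Lemma \ref{lemma:C2} with $V=\mathcal W_{\ell}(\mathfrak{osp}(2|2n))$, the non-degenerate rank-one Heisenberg $\pi=\pi_{H_2}$ (acting semisimply), and $C=D$, after verifying the reality Assumption \ref{ass:real} (which holds since $(H_2|H_2)$ and all $H_2$-charges occurring in $\mathcal W_{\ell}(\mathfrak{osp}(2|2n))$ are rational). The lemma identifies $\mathrm{Com}(D,\mathcal W_{\ell}(\mathfrak{osp}(2|2n)))$ with a rank-one lattice vertex superalgebra $V_N$; positive-definiteness of $N$ follows from the boundedness argument of Lemma \ref{lem:C2lat}, namely that the CFT-type algebra $\mathcal W_{\ell}(\mathfrak{osp}(2|2n))$ cannot contain a lattice subalgebra with conformal weights unbounded below. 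The ``moreover'' clause of Lemma \ref{lemma:C2} then yields the $C_2$-cofiniteness of $\mathcal W_{\ell}(\mathfrak{osp}(2|2n))$.

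The main obstacle, I expect, is not any single computation but the structural asymmetry between the two cases. In type $A$ rationality comes essentially for free, because the dual subregular algebra is already rational and one can coset with the free fermion inside the strongly rational framework of Corollary \ref{remark:super}; the only points needing care are the non-degeneracy of $\widetilde H_1$ at the \emph{specific} admissible levels (rather than merely generically) and the inheritance of even-part rationality through the tensor product. In type $B$ rationality is simply unavailable, so the delicate step is the reverse passage from the Heisenberg coset $D$ back to the full principal $\mathcal W$-superalgebra: one must confirm that the emergent double commutant is exactly a rank-one lattice vertex superalgebra and that it is positive definite. I would resolve the positivity not by computing $(H_2|H_2)$ directly — in the super setting the coweight norm entering $H_2$ need not be positive on its own — but by the CFT-type boundedness argument, which is the robust way to force $(N|N)>0$.
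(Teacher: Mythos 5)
Your proposal is correct and follows essentially the same route as the paper: part (1) is obtained by applying Corollary \ref{remark:super} to $\mathcal W_{k_1}(\mathfrak{sl}_{n+1},f_{\mathrm{sub}})\otimes V_{\Z}$ via Corollary \ref{isom: simple quotients}(3), and part (2) by transporting $C_2$-cofiniteness of the Heisenberg coset through Corollary \ref{isom: simple quotients}(1) and then invoking Lemma \ref{lemma:C2} together with the CFT-type positivity argument for the double commutant lattice. The extra checks you include (level matching, non-degeneracy of $\pi_{\widetilde H_1}$, Assumption \ref{ass:real}) are consistent with, and slightly more explicit than, what the paper writes.
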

\proof
First, we show (1). Since $V_\Z$ is simple, rational, $C_2$-cofinite and of CFT type, so are $\mathcal{W}_k(\mathfrak{sl}_{n+1},f_{\mathrm{sub}})\otimes V_\Z$ with $k$ as \eqref{admissible level 1} and its even part $\left( \mathcal{W}_k ( \mathfrak{sl}_{n+1},f_{\mathrm{sub}} ) \otimes V_\Z \right)_0 = \mathcal{W}_k( \mathfrak{sl}_{n+1},f_{\mathrm{sub}} ) \otimes V_{2\Z}$ by Theorem \ref{thm: c2 and rationality for W} (1). Since the Heisenberg subalgebra $\pi_{\widetilde{H}_1}$ acts semisimply, (1) follows from Corollary \ref{remark:super} and Corollary \ref{isom: simple quotients} (3).

Next, we show (2). Let $C=\operatorname{Com}\bigl( \pi_{H_2},\mathcal{W}_\ell(\mathfrak{osp}(2|2n))\bigr)$. Then $C$ is isomorphic to $\operatorname{Com}\bigl( \pi_{H_1}, \mathcal{W}_{k} (\mathfrak{so}_{2n+1}, f_\mathrm{sub}) \bigr)$ with $k$ as \eqref{admissible level 2} by Corollary \ref{isom: simple quotients} (1), and is $C_2$-cofinite by Corollary \ref{cor:C2cofiniteness} (2). By Lemma \ref{lemma:C2}, the double coset $\operatorname{Com}\bigl( C,\mathcal{W}_\ell(\mathfrak{osp}(2|2n)) \bigr)$ is a rank one lattice vertex superalgebra $V_N$ with some lattice $N$ that is positive definite since $\operatorname{Com}\bigl( C,\mathcal{W}_\ell(\mathfrak{osp}(2|2n)) \bigr)$ is of CFT-type. Therefore $\mathcal{W}_\ell(\mathfrak{osp}(2|2n))$ is $C_2$-cofinite by Lemma \ref{lemma:C2}.

\endproof

In order to prove rationality one needs some new technology and rationality proofs in the ortho-symplectic case will appear in \cite{CL5}.

\end{document}